\newcommand{\Eff}{{\cal E}\! f\! f}
\newtheorem{proposition}{Proposition}[section]
\newtheorem{lemma}[proposition]{Lemma}
\newtheorem{corollary}[proposition]{Corollary}
\newtheorem{definition}[proposition]{Definition}
\newtheorem{theorem}[proposition]{Theorem}
\newtheorem{exrcise}{Exercise}
\newcounter{opgaveteller}
\newcounter{lijst-teller}
\newcounter{boon}
\newcounter{boon2}
\newenvironment{proof}{\noindent {\bf Proof}. \nopagebreak }{\nopagebreak\hfill\rule{2mm}{3mm}}
\newenvironment{rlist}%
   {\begin{list}{\roman{lijst-teller}\/)\hfil}%
              {\labelwidth 2em%
               \leftmargin\labelwidth\advance\leftmargin by\labelsep%
               \usecounter{lijst-teller}}}%
   {\end{list}}
\newenvironment{r'list}%
   {\begin{list}{\roman{lijst-teller}\/)$'$\hfil}%
              {\labelwidth 2em%
               \leftmargin\labelwidth\advance\leftmargin by\labelsep%
               \usecounter{lijst-teller}}}%
   {\end{list}}
\newenvironment{alist}
   {\begin{list}{\alph{boon}\/)\hfil}%
               {\labelwidth 2em%
               \leftmargin\labelwidth\advance\leftmargin by\labelsep%
               \usecounter{boon}}}%
   {\end{list}}
\title{Basic Subtoposes of the Effective Topos}
\author{Sori Lee \and Jaap van Oosten\footnote{corresponding author: Department of Mathematics, Utrecht University, P.O.\ Box 80.010, 3508 TA Utrecht, The Netherlands, {\tt j.vanoosten@uu.nl}}}
\begin{document}
\newcommand{\isi}{\mbox{${\rm I}\Sigma _1^i$}}
\maketitle
\section*{Introduction}
A fundamental concept in Topos Theory is the notion of {\em subtopos}: a subtopos of a topos $\cal E$ is a full subcategory which is closed under finite limits in $\cal E$, and such that the inclusion functor has a left adjoint which preserves finite limits. It then follows that this subcategory is itself a topos, and its internal logic has a convenient description in terms of the internal logic of $\cal E$. Subtoposes of $\cal E$ are in 1-1 correspondence with {\em local operators\/} in $\cal E$: these are certain endomaps on the subobject classifier of $\cal E$.

Whereas local operators/subtoposes of Grothendieck toposes can be neatly described in terms of Grothendieck topologies, for realizability toposes the study of local operators is not so easy. Yet it is important, since many variations on realizability, such as modified realizability, extensional realizability and Lifschitz realizability arise as the internal logic of subtoposes of standard realizability toposes.

Already in his seminal paper \cite{HylandJ:efft} where he introduces the effective topos $\Eff$ (the mother of all realizability toposes), Martin Hyland studied local operators and established that there is an order-preserving embedding of the Turing degrees in the lattice of local operators. Andy Pitts in his thesis (\cite{PittsAM:thet}) has also some material (and in particular an example of a local operator which differs from the examples in Hyland's paper, and which will be studied a bit further in the present paper); there is a small note by Wesley Phoa (\cite{PhoaW:relcet}); and finally, the second author of the present paper identified the local operator which corresponds to Lifschitz' realizability (\cite{OostenJ:extlrh,OostenJ:remlrt}). But as far as we are aware, this is all.

The lattice of local operators in $\Eff$ is vast and notoriously difficult to study. We seem to lack methods to construct local operators and tell them apart. The present paper aims to improve on this situation in the following way: it is shown (theorem \ref{Mfreecom}) that every local operator is the internal join of a family (indexed by a nonempty set of natural numbers) of local operators induced by a nonempty family of subsets of $\mathbb{N}$ (which we call {\em basic\/} local operators). Then, we introduce a technical tool ({\em sights}) by which we can study inequalities between basic local operators. We construct an infinity of new basic local operators and we have some results about what new functions from natural numbers to natural numbers arise in the corresponding subtoposes. For many of our finitary examples (finite collections of finite sets) we can show that they do not create any new number-theoretic functions; for Pitts' example we can show that it forces all {\em arithmetical\/} functions to be total. This seems interesting: we have a realizability-like topos which, though far from being Boolean, yet satisfies true arithmetic (theorem~\ref{pitts=true}). There might be genuine models of nonstandard arithmetic in this topos (by McCarty's \cite{McCartyD:varth}, such cannot exist in $\Eff$: see also \cite{BergB:aricat}). Since Pitts' local operator is induced by the collection of cofinite subsets of $\mathbb{N}$, this is reminiscent of Moerdijk and Palmgren's work on intuitionistic nonstandard models (\cite{MoerdijkI:minsta,MoerdijkI:minmha}) obtained by filters.

There are other reasons why one should be interested in the lattice of local operators in $\Eff$. It is a Heyting algebra in which, as we saw, the Turing degrees embed. It shares this feature with the (dual of the) Medvedev lattice (\cite{MedvedevY:degdmp}), which enjoys a lot of attention these days. Apart from the work by Sorbi and Terwijn (see, e.g., \cite{SorbiA:remsml,TerwijnS:medlcc,TerwijnS:conlml}) who study the logical properties of this lattice, there is the program {\sl Degree Theory: a New Beginning\/} of Steve Simpson, who argues that degree theory should be studied within the Medvedev lattice. From his plenary address `Mass Problems' at the Logic Colloquium meeting in Bern, 2008 (\cite{SimpsonS:masp}): {\sl ``In the 1980s and 1990s, degree theory fell into disrepute. In my opinion, this decline was due to an excessive concentration on methodological aspects, to the exclusion of foundationally significant aspects}. Indeed, it is commonplace in mathematics, in order to study certain structures, to embed them into larger ones with better properties (the passage from ring elements to ideals in number theory; the passage from elements of a structure to types in model theory). By the way, the relationship between the Medvedev lattice and the lattice of local operators in $\Eff$ seems a worthwhile research project.

This paper is organized as follows. Section 1 reminds the reader of some generalities about the subobject classifier $\Omega$, its set of monotone endomaps and local operators, for as much as is relevant to this paper. Section 2 studies these things in the effective topos. Section 3 recalls known facts from the (limited) literature on the subject. In section 4 we introduce our main innovation: the concept of sights. Section 5, Calculations, then presents our results. Finally, we present a concrete definition of truth for first-order arithmetic in subtoposes corresponding to local operators, using the language of sights.
\medskip

\noindent A remark on authorship of the results: most of the technical material was presented in the first author's doctoral thesis (\cite{LeeS:subet}). 

\subsection*{Notation}
In this paper, juxtaposition of two terms for numbers: $nm$ will almost always stand for: the result of the $n$-th partial recursive function to $m$. The only exception is in the conditions in statements in section 5, where `$2m$' really means 2 times $m$, and in the proof of \ref{ceiling} where $dm$ also means $d$ times $m$. We hope the reader can put up with this.

We use the Kleene symbol $\simeq$ between two possibly undefined terms. We use $\langle ,\rangle$ for coded sequences and $(-)_i$ for the $i$-th element of a coded sequence. The symbol $\ast$ between coded sequences means: take the code of the concatenated sequence; so if $a=\langle a_0,\ldots ,a_{n-1}\rangle$ and $b=\langle b_0,\ldots ,b_{m-1}\rangle$ then $a\ast b=\langle a_0,\ldots ,a_{n-1},b_0,\ldots ,b_{m-1}\rangle$. We use $\lambda x.t$ for a standard index of a partial recursive function sending $x$ to $t$.

We employ the logical symbols $\wedge$, $\to$ etc.\ between formulas, but in the context of $\Eff$ also between subsets of $\mathbb{N}$, where
$$\begin{array}{rcl}A\wedge B & = & \{\langle a,b\rangle\, |\, a\in A,b\in B\} \\
A\to B & = & \{ e\, |\,\text{for all $a\in A$, $ea$ is defined and in $B$}\} \end{array}$$

For further, unexplained, standard notations regarding the effective topos, we refer to the treatment \cite{OostenJ:reaics}.

\section{Subobject classifier, monotone maps amd local operators}
We shall use the internal language of toposes freely; we refer to one of several available text books on Topos Theory (\cite{LambekJ:inthoc,MacLaneS:shegl,JohnstonePT:skee}) for expositions of this topic.

If $1\stackrel{\rm true}{\to}\Omega$ is a subobject classifier, elements of $\Omega$ will act as propositions ($\Omega$ is the power set of a one-element set $\{\ast\}$; and $p\in\Omega$ will also denote the proposition ``$\ast\in p$''); hence $\Omega$ is a model of second-order intuitionistic propositional logic. When we use an expression from this logic and say that it `holds', or is `true', we have this standard interpretation in mind.

Top and bottom elements of $\Omega$ are denoted by $\top$ and $\bot$, respectively.
\begin{definition}\label{locdef}\em A {\em local operator} is a map $j:\Omega\to\Omega$ such that the following statements are true:\begin{alist}\item $\forall p.p\to j(p)$\item $\forall pq.j(p\wedge q)\leftrightarrow j(p)\wedge j(q)$\item $\forall p.j(j(p))\to p$\end{alist}
Equivalently, $j$ is a local operator iff the following statements are true:\begin{rlist}\item $\forall pq.(p\to q)\to (j(p)\to j(q))$\item $\top\to j(\top )$\item $\forall p.j(j(p))\to j(p)$\end{rlist}
A {\em monotone map\/} is a map $j:\Omega\to\Omega$ for which i) holds.\end{definition}
We have a subobject Mon of the exponential $\Omega ^{\Omega}$, consisting of the monotone maps, and a subobject Loc of Mon, consisting of the local operators.

We note that Mon is the free suplattice (for suplattices and locales, see \cite{JoyalA:extgtg}) on a poset: the object $\Omega ^{\Omega}$ represents both the endomaps on $\Omega$ and the subobjects of $\Omega$; under this correspondence the  monotone functions are the upwards closed subobjects of $\Omega$. It follows that Mon is the free suplattice on $\Omega ^{\rm op}$ (recall that the free suplattice on a poset $P$ is the set of downwards closed subsets of $P$). In particular, Mon is an internal locale.

We also observe that since $\Omega$ is (internally) complete, Mon is a retract of $\Omega ^{\Omega}$: the retraction sends $g\in\Omega ^{\Omega}$ to the map $p\mapsto\exists q.(g(q)\wedge (q\leq p))$.

Also Loc is an internal locale, as we conclude from the following folklore result in Topos Theory:
\begin{proposition}\label{L1} The inclusion ${\rm Loc}\to {\rm Mon}$ has a left adjoint $L$ which preserves finite meets.\end{proposition}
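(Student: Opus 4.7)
The plan is to construct $L$ as the smallest local operator above $j$, and then verify each required property in turn. First I would show that $\mathrm{Loc}$ is closed under arbitrary internal meets in $\mathrm{Mon}$: given a family $\{k_i\}$ of local operators, their pointwise meet $k = \bigwedge_i k_i$ is inflationary (immediate), preserves binary meets (using distributivity of $\wedge$ over arbitrary meets in the complete Heyting algebra $\Omega$), and is idempotent---for each $i$, monotonicity and idempotence of $k_i$ give $k_i(k(p)) \leq k_i(k_i(p)) \leq k_i(p)$, whence $k(k(p)) = \bigwedge_i k_i(k(p)) \leq k(p)$. Hence $L(j) := \bigwedge\{k \in \mathrm{Loc} : k \geq j\}$ is itself a local operator, and is manifestly the smallest one above $j$, so delivers the required left adjoint by its universal property.

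Preservation of $\top$ is trivial, since the constant-$\top$ map lies in $\mathrm{Loc}$. For binary meets, the inequality $L(j_1 \wedge j_2) \leq L(j_1) \wedge L(j_2)$ follows from the monotonicity of $L$ applied to $j_1 \wedge j_2 \leq j_i$. For the reverse inequality I would pass to the alternative explicit description $L(j)(p) = \bigwedge\{q \in \Omega : p \leq q \text{ and } j(q) \leq q\}$---the smallest ``$j$-stable'' element above $p$. This yields a local operator (the $j$-stable elements form a complete meet-subsemilattice of $\Omega$ by monotonicity of $j$, giving inflation, idempotence, and meet-preservation in $p$) and agrees with the universal definition, since whenever $k \geq j$ is a local operator one has $j(k(p)) \leq k(k(p)) = k(p)$, so $k(p)$ is itself $j$-stable and a candidate in the meet.

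The main obstacle is then to prove, using the explicit formula, that $L(j_1)(p) \wedge L(j_2)(p) \leq q$ for every $q \geq p$ with $j_1(q) \wedge j_2(q) \leq q$. The approach is to exhibit a $j_1$-stable $r_1$ and a $j_2$-stable $r_2$, both $\geq p$, with $r_1 \wedge r_2 \leq q$; the required inequality then follows by the minimality of $L(j_i)(p)$ among $j_i$-stable elements above $p$. The hypothesis $j_1(q) \wedge j_2(q) \leq q$, rewritten via the Heyting adjunction as $j_1(q) \leq j_2(q) \to q$ (and symmetrically), should furnish the right candidates---for example $r_i = L(j_i)(p)$ cut down against an appropriate implication into $q$---but verifying their stability and the meet bound cleanly is the anticipated technical sticking point.
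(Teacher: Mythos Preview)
Your explicit formula $L(j)(p) = \bigwedge\{q : p \leq q,\ j(q) \leq q\}$ is exactly the paper's definition, there written in second-order propositional form as $L(f)(p) = \forall q.[((p\to q)\wedge (f(q)\to q))\to q]$. The paper verifies directly from this formula that $L(f)\in{\rm Loc}$ and that $f\leq j\Leftrightarrow L(f)\leq j$ for $j\in{\rm Loc}$, so your preliminary detour through ``${\rm Loc}$ is closed under arbitrary meets in ${\rm Mon}$'' is correct but unnecessary.

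The genuine gap is precisely where you flag it: the reverse inequality for binary meets. Your proposed strategy---manufacture a $j_1$-stable $r_1\geq p$ and a $j_2$-stable $r_2\geq p$ with $r_1\wedge r_2\leq q$---is not completed, and in fact the obvious candidates (such as $r_1 = L(j_2)(p)\to q$) are not visibly $j_1$-stable from the hypothesis $j_1(q)\wedge j_2(q)\leq q$ alone. Note also that the existence of such $r_1,r_2$ is \emph{equivalent} to what you are trying to prove (take $r_i=L(j_i)(p)$), so a non-circular construction is exactly the content of the argument.

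The paper sidesteps this by reasoning directly in the internal logic, using the two hypotheses $L(f)(p)$ and $L(g)(p)$ \emph{in sequence} rather than in parallel. Given $s$ with $p\to s$ and $f(s)\wedge g(s)\to s$, curry to obtain $f(s)\to (g(s)\to s)$. Under the temporary assumption $f(s)$, we then have $g(s)\to s$; together with $p\to s$, instantiating $L(g)(p)$ at $q:=s$ yields $s$. Discharging gives $f(s)\to s$; together with $p\to s$, instantiating $L(f)(p)$ at $q:=s$ yields $s$. That is the entire argument---no auxiliary stable elements are needed, only two successive instantiations of the universal formula at the same $s$.
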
 
\begin{proof} Define $L(f)$ by the second-order propositional expression:
$$L(f)(p)\; =\;\forall q.[((p\to q)\wedge (f(q)\to q))\to q ]$$
It is easy to deduce that $p\to r$ implies $L(f)(p)\to L(f)(r)$, so i) of definition~\ref{locdef} is satisfied; also ii) holds since $L(f)(\top )$ is valid.

For iii), we first prove the implication
$$f(L(f)(p))\to L(f)(p)$$
as follows: assume $f(L(f)(p))$, $f(r)\to r$, $p\to r$. Since $L(f)(p)$ implies $[((p\to r)\wedge (f(r)\to r))\to r]$ and $f$ is assumed to be in Mon, we have $f(r)$, and hence $r$ by assumption. We conclude that $f(L(f)(p))$ implies
$$\forall r.[((p\to r)\wedge (f(r)\to r))\to r]$$
which is $L(f)(p)$, as desired.
Since we know $f(L(f)(p))\to L(f)(p)$ we can instantiate $L(f)(p)$ for $q$ in
$$\forall q.[((L(f)(p)\to q)\wedge (f(q)\to q))\to q]$$
which is the formula for $L(f)(L(f)(p))$, and get $L(f)(L(f)(p))\to L(f)(p)$, as desired. We conclude that $L(f)\in {\rm Loc}$.

For $j\in {\rm Loc}$ and $f\in {\rm Mon}$, the equivalence
$$f\leq j\;\Leftrightarrow\; L(f)\leq j$$
is easy, which establishes the adjunction.

It remains to be seen that $L$ preserves finite meets. It is straightforward that $L$ preserves the top element. For binary meets, consider that these are given pointwise in Mon. So assume $L(f)(p)\wedge L(g)(p)$; we must prove
$$\forall s.[((f(s)\wedge g(s)\to s)\wedge (p\to s))\to s]$$
Assuming $f(s)\wedge g(s)\to s$, or equivalently $f(s)\to (g(s)\to s)$, as well as $p\to s$, $L(g)(p)$ gives $f(s)\to s$. Again using $p\to s$ and $L(f)$ we get $s$, as desired.\end{proof}
\section{Monotone maps, local operators and basic local operators in $\Eff$}
In $\Eff$, the object Mon of monotone maps $\Omega\to\Omega$ is covered by the assembly
$M\; =\; (M,E)$
where $$M\; =\;\{ f:{\cal P}(\mathbb{N})\to {\cal P}(\mathbb{N})\, |\,\bigcap_{p,q\subseteq\mathbb{N}}(p\to q)\to (f(p)\to f(q))\neq\emptyset\}$$
and
$$E(f)\; =\; \bigcap_{p,q\subseteq\mathbb{N}}(p\to q)\to (f(p)\to f(q))$$
Mon is endowed with a preorder structure: we put
$$[f\leq g]\; =\; E(f)\wedge E(g)\wedge\bigcap_{p\subseteq\mathbb{N}}f(p)\to g(p)$$
The actual object Mon of monotone maps is a quotient of $M$ by the equivalence relation $\cong$ induced by this preorder. However, we shall find it more convenient to work with the preorder $M$ than with its quotient.

Actually, since Mon is a retract of $\Omega ^{\Omega}$ which is a uniform object (all power objects in $\Eff$ are, see \cite{OostenJ:reaics}, 3.2.6), instead of $M$ we could have taken a sheaf. In fact, for $f\in M$, $a\in E(f)$ and $$F(f)(p)\equiv \bigcup_{q\subseteq\mathbb{N}}((q\to p)\wedge f(q))$$
we have: if $\beta$ is such that $\beta z\langle x,y\rangle w\simeq \langle z(xw),y\rangle$ then $\beta\in E(F(f))$, and from $a$ we easily find an element of $[f\cong F(f)]$.

Similarly, we have an internal preorder Lo, a sub-assembly of $M$ which covers the object Loc of local operators:
$${\rm Lo}\; =\; (\{ f:{\cal P}(\mathbb{N})\to {\cal P}(\mathbb{N}\, |\, E_1(f)\wedge E_2(f)\wedge E_3(f)\neq\emptyset\} ,E)$$
where
$$\begin{array}{rcl} E_1(f) & = & \bigcap_{p.q\subseteq\mathbb{N}}[(p\to q)\to (f(p)\to f(q))] \\
E_2(f) & = & f(\mathbb{N}) \\
E_3(f) & = & \bigcap_{p\subseteq\mathbb{N}}[f(f(p))\to f(p)] \\
E(f) & = & E_1(f)\wedge E_2(f)\wedge E_3(f) \end{array}$$
and Lo inherits the preorder from $M$.

The reflection map $L:{\rm Mon}\to {\rm Loc}$ lifts to a map $L:M\to {\rm Lo}$, given by
$$L(f)(p)\; =\; \bigcap_{q\subseteq\mathbb{N}}((p\to q)\wedge (f(q)\to q))\to q$$
Then Lo as internal preorder is equivalent to the preorder $(M,\leq _L)$ where $f\leq _Lg$ iff $f\leq L(g)$.
\medskip

\noindent The following form of the map $L$ is essentially due to A.\ Pitts (\cite{PittsAM:thet}, 5.6):
\begin{proposition}\label{Lchar} The map $L:M\to {\rm Lo}$ is isomorphic (as maps of preorders) to the map
$$L'(f)(p)\; =\;\bigcap\{q\subseteq\mathbb{N}\, |\, \{ 0\}\wedge p\subseteq q\text{ and }\{ 1\}\wedge f(q)\subseteq q\}$$\end{proposition}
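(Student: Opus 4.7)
The plan is to exhibit explicit, uniformly given partial recursive maps in both directions between $L(f)(p)$ and $L'(f)(p)$, depending on $p$ and on a realizer $r\in E_1(f)$, thereby showing $L(f)\cong L'(f)$ in the preorder on $M$ (and in particular $L'(f)\in{\rm Lo}$).

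For $L(f)(p)\subseteq L'(f)(p)$ the construction is immediate. Any $q$ in the family defining $L'(f)(p)$ satisfies $\{0\}\wedge p\subseteq q$ and $\{1\}\wedge f(q)\subseteq q$; hence $\lambda x.\langle 0,x\rangle$ realizes $p\to q$ and $\lambda y.\langle 1,y\rangle$ realizes $f(q)\to q$. Consequently, any $e\in L(f)(p)$ applied to the coded pair $\langle\lambda x.\langle 0,x\rangle,\lambda y.\langle 1,y\rangle\rangle$ yields an element of $q$, independently of the particular $q$ chosen. A single partial recursive index, independent of $f$ and $p$, thus sends $L(f)(p)$ into $L'(f)(p)$.

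The direction $L'(f)(p)\subseteq L(f)(p)$ is the substantive one. Suppose we are given $c=\langle a,b\rangle$ with $a\in p\to q$ and $b\in f(q)\to q$, and let $r\in E_1(f)$. The idea is to use the recursion theorem to manufacture a partial recursive function $\psi$ with an index $n$ satisfying
\[\psi(\langle 0,x\rangle)\simeq ax,\qquad \psi(\langle 1,y\rangle)\simeq b(r\cdot n\cdot y).\]
Setting $q':=\psi^{-1}(q)$, one checks that $\{0\}\wedge p\subseteq q'$ (since $ax\in q$ for $x\in p$) and $\{1\}\wedge f(q')\subseteq q'$ (since $n$ realizes $q'\to q$ by construction, whence $r\cdot n$ realizes $f(q')\to f(q)$ by the defining property of $r$, so $b(r\cdot n\cdot y)\in q$ for $y\in f(q')$). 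Therefore $q'$ lies in the family defining $L'(f)(p)$, so every $e\in L'(f)(p)$ belongs to $q'$, and $\psi(e)\in q$ provides the desired element of $q$, computed from $(e,a,b,r)$ by a uniform procedure.

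The main obstacle is the self-reference in the definition of $\psi$: because its second clause mentions its own index $n$, one must parameterize cleanly in $(r,a,b)$, apply the $s$-$m$-$n$ theorem, and invoke the second recursion theorem to extract $n$ as a total recursive function of $(r,a,b)$; only then is the realizer truly uniform, which is exactly what is required to get a morphism in the preorder (and not just a pointwise set-theoretic equality). Monotonicity in $f$ is then routine, since both realizers depend on $f$ only through an element of $E_1(f)$, and the same discussion shows that $f\cong g$ in $M$ yields $L'(f)\cong L'(g)$.
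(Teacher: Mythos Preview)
Your proof is correct and uses the same recursion-theorem mechanism as the paper: build an index $n$ with
\[
n\langle 0,x\rangle\simeq ax,\qquad n\langle 1,y\rangle\simeq b(r n y),
\]
and observe that the $n$-preimage of the target set belongs to the family defining $L'(f)(p)$.

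The only organizational difference is in the choice of target. The paper does not quantify over all $(q,a,b)$; instead it instantiates $q=L(f)(p)$ once, taking for $a$ and $b$ the uniform realizers of $p\to L(f)(p)$ and $f(L(f)(p))\to L(f)(p)$ already obtained in the proof of Proposition~\ref{L1}. This yields a single index $c$ (one application of the recursion theorem, with no parameters) witnessing $c:L'(f)(p)\to L(f)(p)$ for all $p$. Your version treats arbitrary $(q,a,b)$ directly, which avoids the appeal to Proposition~\ref{L1} but forces you to carry $(r,a,b)$ as parameters through the $s$-$m$-$n$ and recursion theorems to secure uniformity. Both routes are sound; the paper's is a bit more economical, yours is more self-contained.
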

\begin{proof} Given $f\in M$ and $d\in E(f)$, we shall produce, recursively in $d$, elements of $[L(f)\leq L'(f)]$ and $[L'(f)\leq L(f)]$.

First, for $e\in L(f)(p)$ and indices $\alpha$ and $\beta$ such that $\alpha x=\langle 0,x\rangle$ and $\beta x=\langle 1,x\rangle$, we have: if $\{ 0\}\wedge p\subseteq q$ and $\{ 1\}\wedge f(q)\subseteq q$ then $\alpha :p\to q$ and $\beta :f(q)\to q$ hence $e\langle \alpha ,\beta\rangle\in q$. We conclude that $\lambda e.e\langle\alpha ,\beta\rangle\in [L(f)\leq L'(f)]$.

Conversely, from the interpretation in $\Eff$ of the true propositional formulas $\forall p.p\to L(f)(p)$ and $\forall p.f(L(f)(p)\to L(f)(p)$ (as we saw in the proof of~\ref{L1}) we find elements
$$\begin{array}{rcl}a & \in & \bigcap_{p\subseteq\mathbb{N}}[p\to L(f)(p)] \\
b & \in & \bigcap_{p\subseteq\mathbb{N}}[f(L(f)(p))\to L(f)(p)]\end{array}$$
Let $d\in E(f)$. By the recursion theorem, choose an index $c$ such that for all $x,y$:
$$\begin{array}{rcl}c\langle 0,x\rangle & \simeq & ax \\
c\langle 1,y\rangle & \simeq & b(dcy) \end{array}$$
Let $S\, =\, \{ z\, |\, cz\in L(f)(p)\}$. Then clearly $\{ 0\}\wedge p\subseteq S$. Moreover we have $c:S\to L(f)(p)$ hence $\lambda y.dcy:f(S)\to f(L(f)(p))$. So if $\langle 1,y\rangle\in\{ 1\}\wedge f(S)$ then $c\langle 1,y\rangle\in L(f)(p)$. We see therefore, that also $\{ 1\}\wedge f(S)\subseteq S$. By definition of $L'(f)(p)$ we have $L'(f)(p)\subseteq S$ and thus $c:L'(f)(p)\to L(f)(p)$ for all $p$, whence $c\in [L'(f)\leq L(f)]$, as desired.\end{proof}
\medskip

\noindent Let us examine some structure of the preorder $M$. $(M,\leq )$ is an internal Heyting prealgebra (a cartesian closed preorder with finite joins): finite joins and meets are given pointwise (and the constant maps to $\emptyset$ and $\mathbb{N}$ are the bottom and top elements, respectively), and Heyting implication is given by the formula
$$\begin{array}{rcl}(f\to g)(p) & = & \{\langle a,b,c\rangle\, |\,\text{there is an $h\in M$ such that }a\in E(h),\\
 & & b\in [(h\wedge f)\leq g]\text{ and }c\in h(p)\}\end{array}$$
as is easy to verify.

Next, we discuss internal joins. The preorder $(M,\leq )$ is internally cocomplete. Since any object of $\Eff$ is covered by a partitioned assembly, it suffices to consider maps into $M$ from partitioned assemblies. So, let $(X,\pi )$ and $(Y,\rho )$ be partitioned assemblies (with $\pi :X\to\mathbb{N}$, $\rho :Y\to\mathbb{N}$); let $A$ be a subobject of $(X,\pi )\times (Y,\rho )$ and $q:A\to M$ a map. The internal join along $q$, i.e.\ the map $(X,\pi )\to M$ defined internally by
$$x\mapsto\bigvee_{(x,y)\in A}q(x,y)$$
is represented by the function
$$H_A(x)\; =\; \bigcup_{y\in Y}\{\langle n,e\rangle\, |\, n\in [A(x,y)], e\in q(x,y)\}$$

We now wish to establish a connection between $M$ and a preorder structure on the sheaf $\nabla ({\cal P}{\cal P}(\mathbb{N}))$, but actually the theorem we have in mind works only if we restrict to the subassembly $M^*$ of $M$ on those functions $f$ which satisfy $\bigcup_{p\subseteq\mathbb{N}}f(p)\neq\emptyset$, and $\nabla ({\cal P}^*{\cal P}(\mathbb{N}))$ (writing ${\cal P}^*(X)$ for the set of nonempty subsets of $X$). Note that the condition defining elements of $M^*$ is always satisfied by $L(f)$, so we still have that Lo is equivalent to $(M^*,\leq _L)$.

The reader should note that in $\Eff$, $\nabla ({\cal P}(\mathbb{N}))$ is the object ${\cal P}_{\neg\neg}(N)$ of $\neg\neg$-closed subobjects of $N$, and $\nabla ({\cal P}^*{\cal P}(\mathbb{N}))$ is the object of $\neg\neg$-inhabited, $\neg\neg$-closed subobjects of  ${\cal P}_{\neg\neg}(N)$. Also, the image of $M^*$ under the projection $M\to {\rm Mon}$ is $\{ f:{\rm Mon}\, |\,\neg\neg\exists p.f(p)\}$.

For ${\cal A},{\cal B}\in {\cal P}^*{\cal P}(\mathbb{N})$ let
$$[{\cal A}\leq {\cal B}]\; =\; \{ k\, |\,\forall A\in {\cal A}\exists B\in {\cal B}(k:B\to A)\}$$
The proof of the following proposition is left to the reader.
\begin{proposition}\label{Gembed} Define a function $G_{(-)}: {\cal P}^*{\cal P}(\mathbb{N})\to {\cal P}(\mathbb{N})^{{\cal P}(\mathbb{N})}$ by
$$G_{\cal A}(p)\; =\; \bigcup_{A\in {\cal A}}(A\to p)$$
\begin{alist}\item $G_{(-)}$ is a well-defined map: $\nabla ({\cal P}^*{\cal P}(\mathbb{N}))\to M^*$ and an embedding of preorders (it preserves and reflects the order).
\item $G_{\cal A}$ is the least $f\in M^*$ such that $\bigcap_{A\in {\cal A}}f(A)$ is inhabited. That is: there are indices $b$ and $c$ such that for each ${\cal A}\in {\cal P}^*{\cal P}(\mathbb{N})$, $f\in M^*$ and $a\in E(f)$ the following hold:\begin{rlist}
\item if $x\in\bigcap_{A\in {\cal A}}f(A)$ then $b\langle a,x\rangle\in [G_{\cal A}\leq f]$
\item if $y\in [G_{\cal A}\leq f]$ then $c\langle a,y\rangle\in \bigcap_{A\in {\cal A}}f(A)$\end{rlist}
In other words, if $\pi :\nabla ({\cal P}(\mathbb{N}))\to\Omega$ is the standard surjection, then the following is internally true in $\Eff$:
$$\forall {\cal A}:\nabla ({\cal P}^*{\cal P}(\mathbb{N}))\forall f:M^*[G_{\cal A}\leq f\leftrightarrow\forall A\in {\cal A}.\pi (f(A))]$$\end{alist}\end{proposition}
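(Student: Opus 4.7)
The strategy is entirely uniform: every realizer should be constructible from the data $d\in E(f)$ (and, when applicable, the witness that ${\cal A}\leq {\cal B}$ is tracked), without any information about the particular sets in $\cal A$. That is what is required since the source is $\nabla$, i.e.\ every $n\in\mathbb{N}$ realizes every ${\cal A}\in {\cal P}^*{\cal P}(\mathbb{N})$. The one universal combinator I will use repeatedly is the identity index $\iota$ with $\iota a\simeq a$, which belongs to $A\to A$ for every $A\subseteq\mathbb{N}$.

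For part (a), I first check $G_{\cal A}\in M^*$: given $k:p\to q$ and $x\in A\to p$ for some $A\in {\cal A}$, we have $\lambda a.k(xa)\in A\to q\subseteq G_{\cal A}(q)$, so $\lambda kxa.k(xa)$ lies in $E(G_{\cal A})$ for all $\cal A$; and $G_{\cal A}(\mathbb{N})\supseteq (A\to \mathbb{N})\ne\emptyset$ for any chosen $A\in {\cal A}$. For the preorder embedding, if $k\in [{\cal A}\leq {\cal B}]$ and $x\in (A\to p)\subseteq G_{\cal A}(p)$ with $B\in {\cal B}$ and $k:B\to A$, then $\lambda b.x(kb)\in (B\to p)\subseteq G_{\cal B}(p)$, giving a tracker for $G_{\cal A}\leq G_{\cal B}$ computable from $k$. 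Conversely, if $m$ tracks $G_{\cal A}\leq G_{\cal B}$, then for every $A\in {\cal A}$ we have $\iota\in (A\to A)\subseteq G_{\cal A}(A)$, so $m\iota\in G_{\cal B}(A)$, i.e.\ $m\iota\in (B\to A)$ for some $B\in {\cal B}$, uniformly in $A$; hence $m\iota\in [{\cal A}\leq {\cal B}]$, which reflects the order.

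For part (b)(i), given $x\in\bigcap_{A\in {\cal A}}f(A)$ and $a\in E(f)$, any $y\in (A\to p)\subseteq G_{\cal A}(p)$ satisfies $ay:f(A)\to f(p)$, and $x\in f(A)$, so $(ay)x\in f(p)$. Thus $b\langle a,x\rangle=\lambda y.(ay)x$ tracks $G_{\cal A}\leq f$ uniformly. For (b)(ii), if $y=\langle y_1,y_2,y_3\rangle\in [G_{\cal A}\leq f]$, the same identity trick applies: $\iota\in (A\to A)\subseteq G_{\cal A}(A)$ for every $A\in {\cal A}$, so $y_3\iota\in f(A)$ uniformly in $A$, giving $c\langle a,y\rangle=y_3\iota\in\bigcap_{A\in {\cal A}}f(A)$. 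The final internal statement is then a direct translation: both implications in the bi-implication $G_{\cal A}\leq f\leftrightarrow\forall A\in {\cal A}.\pi(f(A))$ are witnessed by the indices $b$ and $c$ we have just produced, noting that $\pi(f(A))$ is just the assertion that $f(A)$ is nonempty.

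The only genuine subtlety — the potential obstacle — is the uniformity constraint imposed by the $\nabla$, which forces all realizers to be independent of $\cal A$; this is precisely what makes the identity combinator $\iota$ indispensable, both for reflecting order in (a) and for extracting an element of $\bigcap_{A\in {\cal A}}f(A)$ in (b)(ii). Everything else is routine realizer bookkeeping.
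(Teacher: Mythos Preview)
Your argument is correct, and in fact the paper offers no proof of its own for this proposition (it explicitly leaves it to the reader), so there is nothing to compare against. Your use of the identity index $\iota$ to extract uniform witnesses is exactly the right device for handling the $\nabla$-uniformity, and the bookkeeping for the triple form of $[G_{\cal A}\leq f]$ is straightforward as you indicate.
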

\begin{theorem}\label{Mfreecom} The preorder $(M^*,\leq )$ is (internally in $\Eff$) the free completion of $(\nabla ({\cal P}^*{\cal P}(\mathbb{N})),\leq )$ under joins indexed by nonempty subsets of $N$ (where, internally, $A\subseteq N$ is `nonempty' iff $\neg\neg\exists n(n\in A)$).\end{theorem}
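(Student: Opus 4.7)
The strategy is to verify the two halves of a free-completion statement internally in $\Eff$: first, that $M^*$ has $\neg\neg$-inhabited $N$-indexed joins and that every $f\in M^*$ canonically decomposes as such a join of elements from the image of $G_{(-)}$; and second, the universal property, namely that for any preorder $Q$ in $\Eff$ with such joins and any monotone $h:\nabla({\cal P}^*{\cal P}(\mathbb{N}))\to Q$, there is a unique join-preserving extension $\tilde h:M^*\to Q$. Closure of $M^*$ under these joins comes for free from the already-recorded cocompleteness of $M$: the explicit formula $H_A$ for joins clearly lands in $M^*$ as soon as its index is $\neg\neg$-inhabited and its summands are nonempty-valued, which holds for the $G_{\cal A}$'s.

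The content is the decomposition. Given $f\in M^*$ with $d\in E(f)$, set $A_f=\bigcup_{p\subseteq\N} f(p)\subseteq \N$, which is $\neg\neg$-inhabited by the definition of $M^*$ and thus is a `nonempty' subobject of $N$ in the internal sense. For $n\in A_f$ put ${\cal A}_n=\{p\subseteq\N\mid n\in f(p)\}\in{\cal P}^*{\cal P}(\mathbb{N})$; note that $n$ itself witnesses $\bigcap_{A\in{\cal A}_n}f(A)\neq\emptyset$, so Proposition~\ref{Gembed}(b)(i) produces $b\langle d,n\rangle\in [G_{{\cal A}_n}\leq f]$ uniformly in $n$. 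Hence the internal $N$-indexed join $J_f=\bigvee_{n\in A_f}G_{{\cal A}_n}$ satisfies $J_f\leq f$. For the converse $f\leq J_f$, given $x\in f(p)$ take $n:=x$: then $p\in{\cal A}_x$, so $\lambda y.y\in (p\to p)\subseteq G_{{\cal A}_x}(p)$, and packaging this against the pair $(x,b\langle d,x\rangle)$ produces the required element of $H_{A_f}(p)=\bigcup_{n\in A_f}\{\langle n,e\rangle\mid n\in A_f,\; e\in G_{{\cal A}_n}(p)\}$, with realizer computed uniformly from $x$ and $d$.

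For the universal property set $\tilde h(f)=\bigvee_{n\in A_f}h({\cal A}_n)$ in $Q$. Monotonicity follows because a witness $e\in[f\leq g]$ gives a map $A_f\to A_g$ (the realizer of $f\leq g$ sends a realizer of $n\in f(p)$ to one of $en\in g(p)$) together with inclusions ${\cal A}_n\subseteq {\cal A}_{en}$, whence $h({\cal A}_n)\leq h({\cal A}_{en})\leq \tilde h(g)$; join-preservation is a direct reshuffling of the same formula $H$; and uniqueness is forced, since any join-preserving extension $\tilde h'$ agreeing with $h$ on the $G_{\cal A}$'s must coincide with $\tilde h$ on $f\cong \bigvee_n G_{{\cal A}_n}$. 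The main obstacle is doing this uniformly in the internal language of $\Eff$: the elements ${\cal A}_n$ live in the sheaf $\nabla({\cal P}^*{\cal P}(\mathbb{N}))$ and thus carry no constructive content beyond their index $n$, so every witness we produce must depend recursively on $n$ and on the fixed indices $b,c,d$ from Proposition~\ref{Gembed}. Once these indices are pinned down, all the inequalities above are straightforward partial-recursive calculations.
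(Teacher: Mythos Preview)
Your decomposition $f\cong\bigvee_{n\in A_f}G_{{\cal A}_n}$ with $A_f=\bigcup_p f(p)$ and ${\cal A}_n=\{p\mid n\in f(p)\}$ is exactly the paper's. Where you diverge is in establishing the universal property: the paper proves that each $G_{\cal A}$ is \emph{inaccessible} for $\neg\neg$-inhabited $N$-indexed joins (if $G_{\cal A}\leq\bigvee_{n\in A}h_n$ then $\exists n\in A.\,G_{\cal A}\leq h_n$), and uses this compactness to show the extension is well-defined regardless of how $f$ is written as a join of generators. You bypass inaccessibility by fixing the canonical decomposition and verifying monotonicity of $\tilde h$ directly via the map $n\mapsto en$ on indices and the inclusion ${\cal A}_n^f\subseteq{\cal A}_{en}^g$. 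Both routes are correct; yours is a bit more economical for the bare free-completion statement, while the paper's inaccessibility lemma is a structural fact of independent interest (it says precisely that the $G_{\cal A}$ are the ``compact'' elements of $M^*$). One point you should not leave implicit: that $\tilde h$ actually extends $h$, i.e.\ $\tilde h(G_{\cal A})\cong h({\cal A})$. This holds because for $i=\lambda x.x$ one has ${\cal A}\subseteq{\cal A}_i$, giving $h({\cal A})\leq h({\cal A}_i)\leq\tilde h(G_{\cal A})$; and conversely $n$ itself realizes $[{\cal A}_n\leq{\cal A}]$ uniformly, giving $\tilde h(G_{\cal A})\leq h({\cal A})$.
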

\begin{proof} Recall that in $\Eff$, the object of nonempty subobjects of $N$ is $\nabla ({\cal P}^*(N))$, with element relation $[n\in A]\equiv\{ n\, |\, n\in A\}$.

For $f\in M^*$, define $A\in\nabla ({\cal P}(\mathbb{N}))$ and $\theta :A\to\nabla ({\cal P}^*{\cal P}(\mathbb{N}))$ by
$$\begin{array}{rcl}A & = & \bigcup_{p\subseteq\mathbb{N}}f(p) \\
\theta (n) & = & \{ q\subseteq\mathbb{N}\, |\, n\in f(q)\} \end{array}$$
The reader can verify that $A$ and $\theta$ are well-defined. Now recall from the remark we made at the beginning of this section that $f$ is isomorphic (in the preorder $(M^*,\leq )$) to $F(f)$ where
$$F(f)(p)\; =\; \bigcup_{q\subseteq\mathbb{N}}(f(q)\wedge (q\to p))$$
From which we derive
$$F(f)(p)\; =\; \{\langle n,e\rangle\, |\, n\in A, e\in\bigcup_{q\in\theta (n)}(q\to p)\}\; = \; (\bigvee_{n\in A}G_{\theta (n)})(p)$$
So we see that $f$ is a join of a family of elements of $\nabla ({\cal P}^*{\cal P}(\mathbb{N}))$, indexed by a nonempty subset of $N$.

Next, we see that elements of the form $G_{\cal A}$, ${\cal A}\in {\cal P}^*{\cal P}(\mathbb{N})$ are {\em inaccessible for joins indexed by nonempty subsets of $N$}. That is, let $A\subseteq N$ nonempty, $h:A\to M^*$ a map. Then $G_{\cal A}\leq\bigvee_{n\in A}h_n$ implies $\exists n\in A.G_{\cal A}\leq h_n$, internally in $\Eff$. This is seen as follows:

Suppose $e\in [G_{\cal A}\leq\bigvee_{n\in A}h_n]$, so
$$e\in\bigcap_{p\subseteq\mathbb{N}}[\bigcup_{B\in {\cal A}}(B\to p)\to\{\langle n,u\rangle\, |\, n\in A, u\in h_n(p)\} ]$$
Since ${\cal A}\neq\emptyset$, there is some $B\in {\cal A}$. Let $i$ be an index for the identity function, then instantiating this $B$ for $p$ we get
$$ei\in\{\langle n,u\rangle\, |\, n\in A, u\in h_n(B)\}$$
This holds for all $B\in {\cal A}$. So we have found an $n=(ei)_0$, satisfying $(ei)_1\in h_n(B)$ for all $B\in {\cal A}$. 

Since $h:A\to M^*$ is a map, from $n$ we find some element $a_n\in E(h_n)$.

Now if $d:B\to p$ is arbitrary, $B\in {\cal A}$, $p\subseteq\mathbb{N}$, then $a_nd:h_n(B)\to h_n(p)$, hence $(a_nd)(ei)_1\in h_n(p)$. We see that for all $p$,
$$\lambda d.(a_nd)(ei)_1:(\bigcup_{B\in {\cal A}}(B\to p))\to h_n(p)$$
which means $G_{\cal A}\leq h_n$, as desired.

The two properties together imply, constructively, that $M^*$ is the stated free completion.

Indeed, suppose $(P,\leq )$ is an internal preorder in $\Eff$ which has joins indexed by nonempty subsets of $N$, and $w:(\nabla ({\cal P}^*{\cal P}(\mathbb{N})),\leq )\to P$ is order-preserving. Then we extend $w$ uniquely to a map $W:M^*\to P$ which preserves joins indexed by nonempty subsets of $N$: for $f\in M^*$, express $f$ as $\bigvee_{n\in A}\theta (n)$. Define $W(f)=\bigvee_{n\in A}w(\theta (n))$. Use the inaccessibility property to show that $W$ is well-defined.\end{proof}
\medskip

\noindent In view of Theorem~\ref{Mfreecom} we shall call elements of $M^*$ of the form $G_{\cal A}$ {\em basic}; and we shall call local operators of the form $L(G_{\cal A})$ also basic.
\section{Known results about local operators in $\Eff$}
In this section we collect some results which have appeared in the literature, as far as relevant for this paper.

The top element of Loc, the function constant $\top$, is the local operator whose category of sheaves is the trivial topos; hence this local operator will also be called {\em trivial}. The least element of Loc, the identity map on $\Omega$, will be denoted {\sf id}.

As is well-known from \cite{HylandJ:efft}, there is a largest nontrivial local operator. This is the {\em double negation\/} operator $\neg\neg$: the function sending $\emptyset$ to $\emptyset$ and everything else to $\mathbb{N}$.
\begin{proposition}[Hyland-Pitts]\label{trivnotnot}~~~~~~~~~~~~~~~~\begin{rlist}
\item Let $j\in M$. Then $L(j)$ represents the trivial local operator if and only if $j(\emptyset )\neq\emptyset$.
\item Let $j\in M$. Then $L(j)$ represents the $\neg\neg$-operator if and only if either of the following equivalent conditions holds:\begin{alist}
\item $j(\emptyset )=\emptyset$ and $\bigcap_{p\neq\emptyset}L(j)(p)\neq\emptyset$
\item $j(\emptyset )=\emptyset$ and $\bigcap_{n\in\mathbb{N}}L(j)(\{ n\} )\neq\emptyset$
\item $j(\emptyset )=\emptyset$ and $L(j)(\{ 0\} )\cap L(j)(\{ 1\} )\neq\emptyset$\end{alist}
\item Let ${\cal A}\in {\cal P}^*{\cal P}(\mathbb{N})$. Then ${\sf id}<L(G_{\cal A})$ if and only if $\bigcap {\cal A}=\emptyset$\end{rlist}\end{proposition}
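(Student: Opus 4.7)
The three clauses I would handle in order, using Pitts's explicit formula $L'(j)(p)=\bigcap\{q\subseteq\mathbb{N}\mid\{0\}\wedge p\subseteq q,\ \{1\}\wedge j(q)\subseteq q\}$ from Proposition~\ref{Lchar} as the common tool.

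For (i), the direction $(\Leftarrow)$: given $x\in j(\emptyset)$ and $d\in E(j)$, the fact that $\emptyset\to q$ is vacuously realized lets $d$ turn $x$ into an element of $j(q)$ uniformly in $q$; post-composing with a realizer of $j(q)\to q$ defines an element of $L(j)(p)$ uniform in both $p$ and $q$, which makes $L(j)$ trivial. For $(\Rightarrow)$: if $L(j)$ is trivial then $L'(j)(\emptyset)=\bigcap\{q\mid\{1\}\wedge j(q)\subseteq q\}$ is nonempty; but were $j(\emptyset)$ empty, $q=\emptyset$ would belong to the family and force the intersection to be empty, a contradiction.

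For clause (ii) I would first observe that ``$L(j)$ represents $\neg\neg$'' reduces, modulo $j(\emptyset)=\emptyset$ (secured by (i) and present in each of (a), (b), (c)), to the existence of a uniform realizer of $\neg\neg p\to L(j)(p)$ for $p\neq\emptyset$, and this is exactly condition (a). It remains to prove (a), (b), (c) equivalent. The chain (a) $\Rightarrow$ (b) $\Rightarrow$ (c) is trivial. For (b) $\Rightarrow$ (a): given $e^*\in\bigcap_n L(j)(\{n\})$ and any nonempty $p$, externally choose $n\in p$; any realizer $u:p\to q$ is automatically a realizer $\{n\}\to q$, so $e^*\langle u,v\rangle\in q$ and hence $e^*\in L(j)(p)$ without any modification of $e^*$. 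The main obstacle is (c) $\Rightarrow$ (b): I would build $e'$ from $e$ by the recursion theorem so that, on input $\langle u,v\rangle$, $e'$ uses $e$ in its two guises (as an $L(j)(\{0\})$- and an $L(j)(\{1\})$-realizer) in coordination with the $j$-closure provided by $v:j(q)\to q$, in order to absorb the indeterminacy of an unknown $n$ with $u(n)\in q$. The verification that this $e'$ lies in every $L'(j)(\{n\})$ would proceed by induction along Pitts's closure definition.

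Clause (iii) is mostly formal. The adjunction in Proposition~\ref{L1} gives $L(G_{\cal A})\leq{\sf id}$ iff $G_{\cal A}\leq{\sf id}$, so since ${\sf id}\leq L(G_{\cal A})$ holds unconditionally, ${\sf id}<L(G_{\cal A})$ iff $G_{\cal A}\not\leq{\sf id}$ in $M$. For the characterization in terms of $\bigcap\cal A$: if $n\in\bigcap\cal A$, then $\lambda e.\,en$ uniformly realizes $G_{\cal A}(p)\to p$ (for any $e\in A\to p$ with $A\in\cal A$, membership $n\in A$ gives $en\in p$), so $G_{\cal A}\leq{\sf id}$ and $L(G_{\cal A})={\sf id}$. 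Conversely, if $k$ uniformly realizes $G_{\cal A}(p)\to p$, specialize $p=A$ and feed $k$ an identity index $i\in A\to A\subseteq G_{\cal A}(A)$; the resulting $ki\in A$ holds for every $A\in\cal A$, so $ki\in\bigcap\cal A$.
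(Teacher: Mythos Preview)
The paper does not actually prove this proposition: it is stated in Section~3 as a known result attributed to Hyland and Pitts, with no proof given. So there is nothing to compare your argument against, and I will simply assess it on its own terms.

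Your treatments of (i) and (iii) are correct and clean. In (iii) the use of the adjunction from Proposition~\ref{L1} to reduce $L(G_{\cal A})\leq{\sf id}$ to $G_{\cal A}\leq{\sf id}$, followed by the instantiation $p=A$ with the identity index, is exactly the right move.

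In (ii), your reduction of ``$L(j)\cong\neg\neg$'' to condition (a) is fine, and (a)$\Rightarrow$(b)$\Rightarrow$(c) and (b)$\Rightarrow$(a) are handled correctly. The gap is (c)$\Rightarrow$(b): your sketch (``build $e'$ by the recursion theorem \ldots\ absorb the indeterminacy of an unknown $n$'') is not a proof, and it is not clear what recursion would accomplish here, since the number $n$ whose singleton you are targeting is simply absent from the data $\langle u,v\rangle$. There is a much shorter route that avoids this entirely and uses only material already established in the paper. By Proposition~\ref{Gembed}(b), condition (c) says precisely that $G_{\{\{0\},\{1\}\}}\leq L(j)$; since $\neg\neg = L(G_{\{\{0\},\{1\}\}})$ (as the paper notes immediately after this proposition), the adjunction gives $\neg\neg\leq L(j)$. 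Combined with $j(\emptyset)=\emptyset$ (so $L(j)\leq\neg\neg$ by (i) and maximality of $\neg\neg$ among nontrivial local operators), this yields $L(j)\cong\neg\neg$, which is condition (a); and (a)$\Rightarrow$(b) is trivial. So the equivalence closes without any direct construction for (c)$\Rightarrow$(b).
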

We conclude that the identity, the trivial local operator and the $\neg\neg$-operator are basic: the identity is $L(G_{\{\{ 0\}\}})$, the trivial one is $L(G_{\{\emptyset\} })$ and $\neg\neg$ is $L(G_{\{\{ 0\} ,\{ 1\}\} })=L(G_{\{ p\subseteq\mathbb{N}|p\neq\emptyset\}})$.

The following corollary is easy.
\begin{corollary}\label{disjointcor} Suppose ${\cal A}\in {\cal P}^*{\cal P}(\mathbb{N})$ contains two r.e.\ separable sets, that is: sets $A_1$ and $A_2$ such that for two disjoint recursively enumerable sets $C,D$ we have $A_1\subseteq C$, $A_2\subseteq D$. Then $\neg\neg\leq L(G_{\cal A})$.\end{corollary}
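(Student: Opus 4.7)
My plan is to invoke Proposition~\ref{trivnotnot} after a case split. First, if some $A\in\mathcal{A}$ is empty, then $G_{\mathcal{A}}(\emptyset)\supseteq(\emptyset\to\emptyset)=\mathbb{N}$, so by~\ref{trivnotnot}(i) $L(G_{\mathcal{A}})$ is trivial, which is certainly $\geq\neg\neg$. In the remaining case every $A\in\mathcal{A}$ is inhabited, so $A\to\emptyset=\emptyset$ for each $A\in\mathcal{A}$ and hence $G_{\mathcal{A}}(\emptyset)=\emptyset$. Criterion~(c) of~\ref{trivnotnot}(ii) then reduces the goal $L(G_{\mathcal{A}})=\neg\neg$ to exhibiting a single element of $L(G_{\mathcal{A}})(\{0\})\cap L(G_{\mathcal{A}})(\{1\})$.

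To produce such a realizer I would extract, from the r.e.\ separability, a partial recursive $\chi$, defined on $C\cup D$, sending $C$ to $0$ and $D$ to $1$ (standard dovetailed enumeration of $C$ and $D$). Using the $s$-$m$-$n$ theorem pick a total recursive $s$ with $s(a)\cdot n\simeq a\cdot\chi(n)$, and set
$$m \;=\;\lambda\langle\alpha,\beta\rangle.\,\beta(s(\alpha)).$$
The crux of the construction is the observation that one and the same program $s(\alpha)$ realizes $A_1\to q$ whenever $\alpha 0\in q$ \emph{and} $A_2\to q$ whenever $\alpha 1\in q$: indeed, for $n\in A_1\subseteq C$ we have $s(\alpha)\cdot n=\alpha 0$, while for $n\in A_2\subseteq D$ we have $s(\alpha)\cdot n=\alpha 1$.

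Verifying $m\in L(G_{\mathcal{A}})(\{0\})$ is then just an unfolding of the definition of $L$: for any $q$ and any $\langle\alpha,\beta\rangle$ with $\alpha 0\in q$ and $\beta:G_{\mathcal{A}}(q)\to q$, the observation above gives $s(\alpha)\in(A_1\to q)\subseteq G_{\mathcal{A}}(q)$, whence $m\langle\alpha,\beta\rangle=\beta(s(\alpha))\in q$; the symmetric computation with $A_2\subseteq D$ handles $\{1\}$. The main obstacle is really only the design of $m$, i.e.\ spotting that the r.e.\ separator $\chi$ allows a single realizer to handle the two values $0$ and $1$ uniformly; once that idea is found, everything else is pure bookkeeping with the definitions of $L$ and $G_{\mathcal{A}}$.
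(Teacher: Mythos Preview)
Your proof is correct. The paper states this as an easy corollary of Proposition~\ref{trivnotnot} and omits the argument, so your use of~\ref{trivnotnot}(i) and~(ii)(c) together with the explicit realizer $m=\lambda\langle\alpha,\beta\rangle.\beta(s(\alpha))$ built from the r.e.\ separator is exactly the kind of verification intended.

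One small remark: the case split is not strictly necessary. Exhibiting an element of $L(G_{\cal A})(\{0\})\cap L(G_{\cal A})(\{1\})$ already yields $\neg\neg\leq L(G_{\cal A})$ directly, since $\neg\neg=L(G_{\{\{0\},\{1\}\}})$ and by Proposition~\ref{Gembed}(b) the inequality $G_{\{\{0\},\{1\}\}}\leq L(G_{\cal A})$ is equivalent to that intersection being inhabited. Your construction of $m$ nowhere uses that every $A\in{\cal A}$ is nonempty, so the single argument covers both cases at once.
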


A different basic local operator was identified by Pitts in \cite{PittsAM:thet}, 5.8:
\begin{proposition}[Pitts]\label{Pittsloc} Let ${\cal A}\, =\, \{\{ m\, |\, m\geq n\}\, |\, n\in\mathbb{N}\}$. Then $L(G_{\cal A})$ is strictly between {\sf id} and $\neg\neg$.\end{proposition}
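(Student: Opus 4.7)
The proposition splits into the strict inequalities $\mathsf{id} < L(G_\mathcal{A})$ and $L(G_\mathcal{A}) < \neg\neg$. The first is immediate from Proposition~\ref{trivnotnot}(iii): $\bigcap \mathcal{A} = \bigcap_n \{m \mid m \geq n\} = \emptyset$ since no $m$ lies in the tail starting at $m+1$. The weak upper bound $L(G_\mathcal{A}) \leq \neg\neg$ also follows easily: every $A \in \mathcal{A}$ is nonempty, so $A \to \emptyset = \emptyset$, giving $G_\mathcal{A}(\emptyset) = \emptyset$; by Proposition~\ref{trivnotnot}(i) the operator $L(G_\mathcal{A})$ is not trivial and hence sits below the maximal nontrivial operator $\neg\neg$.

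The real content is the strict inequality $L(G_\mathcal{A}) \neq \neg\neg$. By criterion (ii)(c) of Proposition~\ref{trivnotnot} it suffices to show
\[
L(G_\mathcal{A})(\{0\}) \cap L(G_\mathcal{A})(\{1\}) = \emptyset.
\]
For this I would use Pitts' reformulation from Proposition~\ref{Lchar}: $L(G_\mathcal{A})(\{n\}) = I_n$, where $I_n \subseteq \mathbb{N}$ is the least set containing $\langle 0, n \rangle$ and satisfying the closure rule \emph{if $\gamma$ is an index with $\gamma m \in I_n$ for all $m \geq k$ for some $k \in \mathbb{N}$, then $\langle 1, \gamma \rangle \in I_n$}. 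This least-fixed-point presentation equips each $e \in I_n$ with an ordinal rank $\mathrm{rk}_n(e)$ in the usual way.

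The argument then proceeds by transfinite induction on $\mathrm{rk}_0(e)$ to show that no $e \in I_0$ lies in $I_1$. When $\mathrm{rk}_0(e) = 0$ we have $e = \langle 0, 0 \rangle$; since every element of $I_1$ has the shape $\langle 0, 1 \rangle$ or $\langle 1, \cdot \rangle$, the injectivity of the pairing gives $\langle 0, 0 \rangle \notin I_1$. In the inductive step $e = \langle 1, \gamma \rangle$ with $\gamma$ mapping some tail $\{m \mid m \geq k\}$ into $I_0$ at strictly lower rank. If we also had $e \in I_1$, then, since $e \neq \langle 0, 1 \rangle$, the same $\gamma$ (recovered from $e$ by decoding) would have to map some tail $\{m \mid m \geq k'\}$ into $I_1$; for any $m \geq \max(k, k')$, $\gamma m$ would lie in $I_0 \cap I_1$ with smaller $\mathrm{rk}_0$, contradicting the induction hypothesis.

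The main obstacle is the bookkeeping around the inductive presentation: one must verify that the $\gamma$ extracted from an $I_0$-derivation of $e$ agrees, as a natural number, with the $\gamma'$ extracted from an $I_1$-derivation of the same $e$ (this is the injectivity of $\langle 1, \cdot \rangle$), and that $\mathrm{rk}_0$ is a genuinely well-defined ordinal despite the potential multiplicity of derivations (guaranteed by the least-fixed-point formulation). Once these points are secured, the rest is clean transfinite induction.
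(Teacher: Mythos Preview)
Your argument is correct. The reduction to Proposition~\ref{trivnotnot}(iii) and (i) for the easy inequalities is exactly right, and your transfinite induction showing $L'(G_{\cal A})(\{0\})\cap L'(G_{\cal A})(\{1\})=\emptyset$ is sound: the decisive point is that any two tails $\{m\mid m\geq k\}$ and $\{m\mid m\geq k'\}$ intersect, so the $\gamma$ extracted (uniquely, by injectivity of pairing) from an element $\langle 1,\gamma\rangle$ produces a common value of strictly smaller rank. Your care about well-definedness of $\mathrm{rk}_0$ is justified by the ordinal presentation in Proposition~\ref{Lordinal}.

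Note that the paper does not actually prove this proposition: it is quoted from Pitts' thesis. The paper's own machinery, developed in Section~4, would yield the strict inequality $\neg\neg\not\leq_L G_{\cal A}$ as an instance of Proposition~\ref{nintcor2}: identifying $\neg\neg$ with $L(G_{\{\{0\},\{1\}\}})$, one observes that $\{\{0\},\{1\}\}$ lacks the $2$-intersection property while ${\cal A}$ (equivalently, the cofinite sets $\cal F$) has it, since any two tails meet. Your induction is precisely the unwinding of that abstract argument in this special case, carried out directly on the inductive description of $L'$ from Proposition~\ref{Lchar} rather than through the language of sights. The sight formulation buys reusability (it drives all of Section~5), whereas your hands-on version is self-contained and avoids the overhead of Definitions~\ref{sightsdef}--\ref{rdefdef}; the mathematical content---two members of $\cal A$ always intersect---is the same.
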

Examples of non-basic local operators are those which force a partial function to be total. Suppose $f:\mathbb{N}\to\mathbb{N}$ is a function. The $\neg\neg$-closed subobject of $N\times N$ in $\Eff$ given by $\{ (n,f(n))\, |\, n\in\mathbb{N}\}$ is a single-valued relation whose domain $D_f$  is a $\neg\neg$-dense subobject of $N$. The least local operator which forces $D_f$ to be the whole of $N$ is $L(\bigvee_nG_{\rho (n)})$ where $\rho (n)=\{\{ f(n)\}\}$. Recall that $\bigvee_nG_{\rho (n)}(p)=\{\langle n,e\rangle\, |\, ef(n)\in p\}$
\begin{theorem}[Hyland]\label{HylandTuring} Denoting this least local operator by $j_f$, we have $j_f\leq j_g$ if and only if $f$ is Turing reducible to $g$.\end{theorem}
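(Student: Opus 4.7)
The plan is to use Pitts' characterization (Proposition~\ref{Lchar}) to unfold $j_f$ and $j_g$ concretely: $j_f(p)$ is the smallest $q\subseteq\mathbb{N}$ with $\{0\}\wedge p\subseteq q$ and with $\langle 1,\langle n,e\rangle\rangle\in q$ whenever $ef(n)\in q$, and analogously $j_g(p)$ is built with $g$ in place of $f$ in the closure rule. With this presentation each element of $j_g(p)$ is generated by a well-founded derivation whose leaves lie in $\{0\}\wedge p$ and whose internal steps correspond to queries to $g$; this structural view is what drives both directions.

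For $(\Leftarrow)$, assume $f$ is Turing-reducible to $g$ by some oracle algorithm which, on input $n$, makes finitely many queries to $g$ and then outputs $f(n)$. Given $\langle n,e\rangle$ with $ef(n)\in p$, I would produce an element of $j_g(p)$ by simulating the oracle algorithm in continuation-passing style: if the computation halts with the value $f(n)$, return $\langle 0,e\cdot f(n)\rangle$, which lies in $\{0\}\wedge p$; if the next step is a query $g(m)$, return $\langle 1,\langle m,e'\rangle\rangle$ where $e'$ is an $s$-$m$-$n$ index that, on any hypothetical answer $v$, continues the simulation with $g(m)=v$ inserted. The second recursion theorem supplies a single partial recursive function implementing this recipe uniformly in $p$, and induction on the length of the oracle computation shows the output always lies in $j_g(p)$, hence $j_f\leq j_g$.

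For $(\Rightarrow)$, assume $c$ realizes $j_f\leq j_g$ and let $i$ be the identity index. For each $k$, the identity satisfies $i\cdot f(k)=f(k)\in\{f(k)\}$, so $\langle k,i\rangle\in\bigvee_nG_{\rho(n)}(\{f(k)\})$ and therefore $c(\langle k,i\rangle)\in j_g(\{f(k)\})$. By Pitts' description, $j_g(\{f(k)\})$ is the inductive closure of $\{\langle 0,f(k)\rangle\}$ under the rule ``add $\langle 1,\langle m,e'\rangle\rangle$ whenever $e'g(m)$ is already in''; in particular the only element of the form $\langle 0,v\rangle$ it contains has $v=f(k)$. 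I would then define an oracle Turing machine $M^g$ that, on input $k$, sets $y:=c(\langle k,i\rangle)$, repeatedly executes ``if $y=\langle 1,\langle m,e'\rangle\rangle$ then query $g(m)$ and replace $y$ by $e'g(m)$'' until $y=\langle 0,v\rangle$, and outputs $v$. Termination follows from the well-foundedness of the inductive generation, and correctness from the uniqueness of the leaf value, so $f\leq_{T}g$.

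The step I expect to be the main obstacle is the forward direction: the recursion theorem must produce a single partial recursive function that handles an oracle computation whose queries and answers are discovered on the fly while remaining parametric in the input pair $\langle n,e\rangle$ and uniform with respect to every admissible $p$. Once the continuation-passing recursion is set up correctly, what remains is a routine induction on the length of the oracle computation in one direction and a routine well-founded recursion on the $j_g$-derivations in the other.
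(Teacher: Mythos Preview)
The paper does not actually prove this theorem: it is stated in Section~3 (``Known results about local operators in $\Eff$'') and attributed to Hyland's original paper, with no argument given. So there is no proof in the paper against which to compare your approach.

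That said, your proposal is correct and fits the paper's framework well. Two small points worth tightening. First, in $(\Rightarrow)$ you apply the realizer $c$ of $j_f\leq j_g$ directly to $\langle k,i\rangle\in h_f(\{f(k)\})$, where $h_f=\bigvee_nG_{\rho(n)}$; strictly speaking $c$ expects an element of $j_f(\{f(k)\})$. This is harmless, since by the adjunction $L\dashv$ inclusion one has $j_f\leq j_g$ iff $h_f\leq j_g$, and a realizer for the latter is obtained effectively from $c$ (or one precomposes with the easy inclusion $h_f\leq L'(h_f)$). Second, your closing paragraph calls the recursion-theorem construction the ``forward direction'', but that is your $(\Leftarrow)$ direction; the terminology is flipped.

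It is also worth noting that your well-foundedness argument in $(\Rightarrow)$ is exactly the phenomenon the paper exploits later in Corollary~\ref{Hylandnotbasic}: for $\rho_g$ each $\theta(n)=\{\{g(n)\}\}$ is a singleton of a singleton, so a $(z,\rho_g,p)$-dedicated sight is a single finite path uniquely determined by $z$ (and $g$), and walking down it with oracle $g$ is precisely your machine $M^g$. So while the paper omits the proof, your argument is entirely in its spirit.
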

The following proposition is due to Phoa (\cite{PhoaW:relcet}):
\begin{proposition}[Phoa]\label{phoastelling} If $j$ is a local operator such that $j_f\leq j$ for each $f:\mathbb{N}\to\mathbb{N}$, then $\neg\neg\leq j$.\end{proposition}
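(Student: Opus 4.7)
I plan to argue by contradiction. First, note that $\neg\neg\leq j$ is equivalent to the existence of a single $e\in\mathbb{N}$ with $e\in j(\{n\})$ for every $n$: the forward direction sets $e=\alpha\cdot 0$ for any realizer $\alpha$ of $\neg\neg\leq j$, and the backward direction uses the uniform realizer of monotonicity of $j$ together with the identity realizing $\{n\}\subseteq p$ for any $n\in p$. So assume for contradiction that no such $e$ exists; by classical choice in the metatheory, fix for each $m\in\mathbb{N}$ some $\nu(m)\in\mathbb{N}$ with $m\notin j(\{\nu(m)\})$.

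Next, build the diagonal function $f:\mathbb{N}\to\mathbb{N}$ by the rule $f(i)=\nu(\{i\}(\langle i,\iota\rangle))$ whenever $\{i\}(\langle i,\iota\rangle)$ is defined (here $\iota$ is a fixed index of the identity function), and $f(i)=0$ otherwise. Applying the hypothesis to this $f$ gives, via $j_f=L(\bigvee_n G_{\rho(n)})$ with $\rho(n)=\{\{f(n)\}\}$ and the fact that $L$ is a reflection with $L(j)\cong j$, a realizer $s\in[\bigvee_n G_{\rho(n)}\leq j]$ in $M$. Since $\iota\cdot f(n)=f(n)\in\{f(n)\}$, the pair $\langle n,\iota\rangle$ lies in $\bigvee_m G_{\rho(m)}(\{f(n)\})$, so $s\cdot\langle n,\iota\rangle$ converges and lies in $j(\{f(n)\})$ for every $n$.

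The contradiction now drops out by specializing to $n=s$: on the one hand $s\cdot\langle s,\iota\rangle\in j(\{f(s)\})$, while on the other hand $f(s)=\nu(\{s\}(\langle s,\iota\rangle))=\nu(s\cdot\langle s,\iota\rangle)$ was chosen precisely so that $s\cdot\langle s,\iota\rangle\notin j(\{\nu(s\cdot\langle s,\iota\rangle)\})=j(\{f(s)\})$. The main delicacy is in setting up the self-reference cleanly: using the literal value $\{i\}(\langle i,\iota\rangle)$ rather than a recursion-theorem-style fixed point ensures that the contradiction at $i=s$ pits the \emph{same} number against itself, rather than comparing two distinct indices of one partial function (which would not contradict anything, since membership in $j(\{m\})$ is not extensional in indices).
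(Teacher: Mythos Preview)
The paper does not actually supply a proof of this proposition: it is quoted as a known result of Phoa with a citation to \cite{PhoaW:relcet}, so there is no in-paper argument to compare against. Your proof must therefore be assessed on its own merits.

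Your argument is correct. The reduction of $\neg\neg\leq j$ to the existence of a uniform element of $\bigcap_n j(\{n\})$ is exactly the content of part ii)b) of Proposition~\ref{trivnotnot} (specialised to $j\in{\rm Lo}$, so that $L(j)\cong j$), together with the observation that $\neg\neg$ is the largest nontrivial local operator. The diagonal construction is clean: from a hypothetical counterexample function $\nu$ you manufacture a single total $f$ whose associated realizer $s$ of $\bigvee_nG_{\rho(n)}\leq j$ is forced, at the self-referential input $\langle s,\iota\rangle$, to land simultaneously inside and outside $j(\{f(s)\})$. Your closing remark about why one uses the raw value $\{i\}(\langle i,\iota\rangle)$ rather than a recursion-theorem fixed point is well taken and pinpoints the only place the argument could go wrong.

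Two cosmetic points. First, in the paper's conventions $[f\leq g]$ carries the conjuncts $E(f)\wedge E(g)$ as well as $\bigcap_p(f(p)\to g(p))$; when you write $s\cdot\langle n,\iota\rangle$ you are silently projecting to the third component, which is harmless but worth saying. Second, the paper writes Kleene application as juxtaposition ($nm$) rather than $\{n\}(m)$ or $n\cdot m$; aligning notation would avoid any doubt that $\{s\}(\langle s,\iota\rangle)$ and $s\cdot\langle s,\iota\rangle$ denote the same number.
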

In general, if $X\stackrel{m}{\to}Y$ is a monomorphism in $\Eff$ there is (by standard topos theory) a least local operator $j$ for which $m$ is dense. Let us write this out explicitly for the case that $Y$ is an assembly (since every object of $\Eff$ is covered by an assembly, this covers the general case): let $Y=(Y,E)$ and $R:Y\to {\cal P}(\mathbb{N})$ be such that $\bigcap_{y\in Y}(R(y)\to E(y))$ is nonempty, representing the subobject $m$. Then the least local operator for which $m$ is dense is $L(\bigvee_nG_{\theta (n)})$ where $\theta (n)=\{ R(y)\, |\, n\in E(y)\}$.

Another non-basic local operator in $\Eff$ is described in \cite{OostenJ:extlrh,OostenJ:remlrt}. Let Tot be the set of indices of total recursive functions. Consider the assembly $A=(A,E)$ where 
$$\begin{array}{rcl}A & = & \{ \langle e,f\rangle\, |\, e,f\in {\rm Tot}\text{ and }\forall nm(en=0\vee fm=0)\} \\
E(\langle e,f\rangle ) & = & \{\langle e,f\rangle\} \end{array}$$
Let $R:A\to {\cal P}(\mathbb{N})$ send $\langle e,f\rangle$ to the set
$$\{\langle e,f,0\rangle\, |\,\forall n(en=0)\}\cup\{\langle e,f,1\rangle\, |\,\forall m(fm=0\}$$
Then $R$ determines a subobject $[R]$ of $A$ and let $j_L$ be the least local operator for which this inclusion is dense.

The local operator $j_L$ corresponds to the Lifschitz subtopos of $\Eff$. In \cite{OostenJ:remlrt} it is proved that $j_L$ is the least local operator for which the following principle of first-order arithmetic, there called $B\Sigma ^0_1{\rm -MP}$ is true in the corresponding sheaf subtopos:
$$\forall e(\neg\neg\exists n(n\in [e])\to\exists n(n\in [e]))$$
where $[e]$ denotes $\{ n\leq (e)_1\, |\, (e)_0n\uparrow\}$. It can be shown that $B\Sigma ^0_1{\rm -MP}$ is equivalent to the "Lesser Limited Principle of Omniscience", which has some standing in generalized computability and constructive analysis (see e.g.\ \cite{BrattkaV:weidop,IshiharaH:omnpkl}). Since decidability of the Halting Problem implies this principle, we conclude that $j_L\leq j_h$, if $h$ is a decision function for the Halting Problem. In fact we have $j_L<j_h$, since the Halting Problem is not decidable in the Lifschitz topos.
\section{Sights}
In this section we develop some theory of a certain type of well-founded trees, which we call {\em sights}, which will enable us to derive inequalities and non-inequalities between a number of new local operators in $\Eff$. The basic insight is that elements of $L(f)(p)$ are functions defined by recursion over a well-founded tree (see in particular definition~\ref{supportingdef} and the discussion preceding it,  and proposition~\ref{dedsup}).

Let us look again at the operator $L'$ from Proposition~\ref{Lchar}:
$$L'(f)(p)\; =\;\bigcap\{ q\subseteq\mathbb{N}\, |\,\{ 0\}\wedge p\subseteq q\text{ and }\{ 1\}\wedge f(q)\subseteq q\}$$
for $f\in M$.

We can present $L'$ also in the following way:
\begin{proposition}\label{Lordinal} For ordinals $\alpha <\omega _1$, define $L(f)(p)_{\alpha}$ as follows:
$$\begin{array}{rcl}L(f)(p)_0 & = & \{ 0\}\wedge p \\
L(f)(p)_{\alpha +1} & = & L(f)(p)_{\alpha}\,\cup\, (\{ 1\}\wedge f(L(f)(p)_{\alpha}) )\\
L(f)(p)_{\lambda} & = & \bigcup_{\beta <\lambda}L(f)(p)_{\beta}\text{ for limit }\lambda \end{array}$$
Then $L'(f)(p)=L(f)(p)_{\omega _1}$. Of course, since $L'(f)(p)$ is a countable set, there is a countable ordinal $\alpha$ such that $L'(f)(p)=L(f)(p)_{\alpha}$.\end{proposition}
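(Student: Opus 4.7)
The plan is to prove the set-theoretic equality $L'(f)(p)=L(f)(p)_{\omega_{1}}$ (reading $L(f)(p)_{\omega_{1}}$ as $\bigcup_{\alpha<\omega_{1}}L(f)(p)_{\alpha}$) by establishing the two inclusions. The guiding intuition is that both sides describe the least subset of $\mathbb{N}$ which contains $\{0\}\wedge p$ and is closed under $X\mapsto\{1\}\wedge f(X)$: the intersection in the definition of $L'$ captures this closure from above, the ordinal iteration constructs it from below, and the two meet by a Knaster--Tarski-style argument.

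For the inclusion $L'(f)(p)\subseteq L(f)(p)_{\omega_{1}}$, I would first note that the sequence $(L(f)(p)_{\alpha})_{\alpha}$ is $\subseteq$-increasing, directly from the recursion clauses (the successor clause takes a union with the previous stage; the limit clause is a union). Since each stage is a subset of the countable set $\mathbb{N}$, a cardinality argument produces a countable ordinal $\alpha_{0}<\omega_{1}$ at which the sequence stabilises. Consequently $L(f)(p)_{\omega_{1}}=L(f)(p)_{\alpha_{0}}$ satisfies $\{1\}\wedge f(L(f)(p)_{\omega_{1}})\subseteq L(f)(p)_{\omega_{1}}$ and contains $\{0\}\wedge p=L(f)(p)_{0}$, so it is one of the sets $q$ over which the intersection defining $L'(f)(p)$ is taken, and the inclusion follows.

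For $L(f)(p)_{\omega_{1}}\subseteq L'(f)(p)$, I would proceed by transfinite induction on $\alpha<\omega_{1}$, proving $L(f)(p)_{\alpha}\subseteq L'(f)(p)$. The base case $\alpha=0$ is immediate since every $q$ in the defining intersection contains $\{0\}\wedge p$, and the limit stages are trivial. For the successor step, the key intermediate fact is that $L'(f)(p)$ is itself closed under $X\mapsto\{1\}\wedge f(X)$: for any closure set $q$ one has $L'(f)(p)\subseteq q$, so by set-monotonicity of $f$, $\{1\}\wedge f(L'(f)(p))\subseteq\{1\}\wedge f(q)\subseteq q$, and intersecting over $q$ yields $\{1\}\wedge f(L'(f)(p))\subseteq L'(f)(p)$. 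The induction step is then immediate: the inductive hypothesis together with monotonicity of $f$ gives $\{1\}\wedge f(L(f)(p)_{\alpha})\subseteq\{1\}\wedge f(L'(f)(p))\subseteq L'(f)(p)$.

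The main obstacle is that a general $f\in M$ need not be set-theoretically monotone in $p$ --- the defining condition on $M$ only requires a \emph{realiser} witnessing monotonicity --- and the successor step of the second inclusion genuinely uses set-monotonicity. This is accommodated using the remark made earlier in Section~2: every $f\in M^{*}$ is isomorphic in the preorder $M$ to the manifestly set-monotone $F(f)(p)=\bigcup_{q}(f(q)\wedge(q\to p))$. Either replacing $f$ by $F(f)$ (which alters $L(f)(p)_{\alpha}$ only up to isomorphism in $M$) or carrying the realiser of monotonicity uniformly through the ordinal induction restores the set-monotone behaviour, after which the argument above applies.
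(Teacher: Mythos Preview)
The paper does not prove this proposition at all; it is stated as self-evident, and what is implicitly intended is exactly the standard least-fixed-point argument you give (the operator $q\mapsto(\{0\}\wedge p)\cup(\{1\}\wedge f(q))$ is iterated from below, and this agrees with the intersection of all prefixed points). So your approach is the right one and matches the authors' intent.

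Your observation about set-monotonicity is sharp and correct: the successor step of the inclusion $L(f)(p)_{\alpha}\subseteq L'(f)(p)$ really does require $p\subseteq q\Rightarrow f(p)\subseteq f(q)$, and membership in $M$ only yields a realiser of monotonicity, not a set inclusion. However, neither of your proposed remedies quite works as written. Replacing $f$ by $F(f)$ proves the equality for $F(f)$, not for $f$; the proposition asserts a literal equality of subsets of $\mathbb{N}$, which does not transfer along $\cong$ in $M$. And ``carrying the realiser through the induction'' produces, from $L(f)(p)_{\alpha}\subseteq q$, only an element of $f(L(f)(p)_{\alpha})\to f(q)$, hence a \emph{transformed} element of $q$ rather than the inclusion $\{1\}\wedge f(L(f)(p)_{\alpha})\subseteq q$ that you need. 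The clean resolution is to observe that the proposition is only ever applied in the paper with $f=G_{\theta}$, and $G_{\theta}$ \emph{is} set-monotone (since $p\subseteq p'$ implies $A\to p\subseteq A\to p'$ for every $A$). One may therefore simply add set-monotonicity of $f$ as a standing hypothesis, after which your argument goes through verbatim.
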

Proposition~\ref{Lordinal} leads us to the following definition.
\begin{definition}\label{sightsdef}\em A {\em sight\/} is, inductively,\begin{itemize}
\item[] either a thing called {\sf NIL},
\item[] or a pair $(A,\sigma )$ where $A\subseteq\mathbb{N}$ and $\sigma$ a function on $A$ such that $\sigma (a)$ is a sight for each $a\in A$.\end{itemize}\end{definition}
Let $\theta$ be a function $B\to {\cal P}^*{\cal P}(\mathbb{N})$ for $B\subseteq\mathbb{N}$ nonempty. With $\theta$ we associate (as in~\ref{Mfreecom}) the element $G_{\theta}$ of $M^*$ given by
$$G_{\theta}(p)\; =\; \{\langle n,e\rangle\, |\, n\in B,\exists A\in\theta (n)(e:A\to p)\}$$
So, $G_{\theta}=\bigvee_{n\in B} G_{\theta (n)}$.
\begin{definition}\label{dedicateddef}\em For $\theta$ as above, $p\subseteq\mathbb{N}$ and $z\in\mathbb{N}$ we say that a sight $S$ is $(z,\theta ,p)$-{\em dedicated\/} if\begin{itemize}
\item[] either $S={\sf NIL}$ and $z\in\{ 0\}\wedge p$,
\item[] or $S=(A,\sigma )$, $z=\langle 1,\langle n,e\rangle\rangle$, $A\in\theta (n)$, and for all $a\in A$, $ea$ is defined and $\sigma (a)$ is $(ea,\theta ,p)$-dedicated.\end{itemize}\end{definition}
\begin{proposition}\label{Lsightschar} For $\theta$, $z$, $p$ as before, we have:

$z\in L'(G_{\theta})(p)$ if and only if there is a $(z,\theta ,p)$-dedicated sight.\end{proposition}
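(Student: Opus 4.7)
The plan is to prove the two directions of the biconditional separately, using the defining property of $L'(G_\theta)(p)$ as the intersection of all sets $q$ satisfying $\{0\}\wedge p \subseteq q$ and $\{1\}\wedge G_\theta(q)\subseteq q$.

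For ($\Leftarrow$), I would fix an arbitrary such $q$ and prove, by induction on the well-founded sight $S$ witnessing that $S$ is $(z,\theta,p)$-dedicated, that $z\in q$. If $S = \textsf{NIL}$, then $z\in\{0\}\wedge p\subseteq q$ directly. If $S = (A,\sigma)$, then $z = \langle 1,\langle n,e\rangle\rangle$, $A\in\theta(n)$, and each $\sigma(a)$ is $(ea,\theta,p)$-dedicated; the induction hypothesis yields $ea\in q$ for every $a\in A$, so $e\colon A\to q$, which gives $\langle n,e\rangle\in G_\theta(q)$ and therefore $z\in\{1\}\wedge G_\theta(q)\subseteq q$. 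Since $q$ was arbitrary, $z\in L'(G_\theta)(p)$.

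For ($\Rightarrow$), I would set $S^p_\theta = \{z\mid \text{there exists a }(z,\theta,p)\text{-dedicated sight}\}$ and verify that $S^p_\theta$ is one of the sets whose intersection defines $L'(G_\theta)(p)$; minimality then gives $L'(G_\theta)(p)\subseteq S^p_\theta$. The inclusion $\{0\}\wedge p\subseteq S^p_\theta$ is witnessed by $\textsf{NIL}$. For the inclusion $\{1\}\wedge G_\theta(S^p_\theta)\subseteq S^p_\theta$, take $\langle 1,\langle n,e\rangle\rangle$ in the left-hand side; then $n\in B$ and there is some $A\in\theta(n)$ with $e\colon A\to S^p_\theta$, so for each $a\in A$ we may pick a $(ea,\theta,p)$-dedicated sight $\sigma(a)$, and $(A,\sigma)$ is then a $(\langle 1,\langle n,e\rangle\rangle,\theta,p)$-dedicated sight.

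The only delicate point is the simultaneous choice of the sights $\sigma(a)$ for $a\in A$ in the last step: formally one is defining a function $\sigma$ on $A$ by choosing, for each $a$, a witness to $ea\in S^p_\theta$. Since $A\subseteq\mathbb{N}$ and the witnesses are drawn from a definable class of hereditarily countable trees, this is a routine application of countable choice in the metatheory (or can be avoided by quantifying over sights rather than picking one). There is no genuine combinatorial obstacle; the content of the proposition is essentially that sights are precisely the recursion trees implicit in the inductive build-up of $L'(G_\theta)(p)$ in Proposition~\ref{Lordinal}.
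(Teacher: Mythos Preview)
Your proof is correct, and it takes a genuinely different route from the paper's. The paper proves the proposition via the ordinal hierarchy of Proposition~\ref{Lordinal}: for the forward direction it argues by induction on $\alpha<\omega_1$ that every $z\in L(G_\theta)(p)_\alpha$ admits a dedicated sight, and for the converse it shows by induction on the sight that any $(z,\theta,p)$-dedicated $S$ lands $z$ in some stage $L(G_\theta)(p)_\beta$. You instead work directly from the definition of $L'(G_\theta)(p)$ as an intersection: for $(\Leftarrow)$ you show by sight-induction that $z$ lies in every closed $q$, and for $(\Rightarrow)$ you verify that the set of $z$ admitting a dedicated sight is itself one of the closed sets, so the intersection is contained in it. Your argument is more elementary in that it bypasses the transfinite stratification entirely; the paper's approach, on the other hand, makes the ordinal rank of each element explicit and ties the result visibly to the well-founded-tree intuition behind sights. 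The choice issue you flag in the last step is exactly the same as in the paper's successor-stage argument (``for each $a\in A$ there is an $(ea,\theta,p)$-dedicated sight $\sigma(a)$'') and is unproblematic in the ambient classical metatheory.
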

\begin{proof} We use~\ref{Lordinal}. First we prove that for each $\alpha <\omega _1$, if $z\in L(G_{\theta})(p)_{\alpha}$ then there is a $(z,\theta ,p)$-dedicated sight.

For $\alpha =0$: if $z\in L(G_{\theta})(p)_0 = \{ 0\}\wedge p$, then {\sf NIL} is $(z,\theta ,p)$-dedicated.

For $\alpha +1$: suppose $z\in L(G_{\theta})(p)_{\alpha +1}$. By induction hypothesis we may assume $z\in\{ 1\}\wedge G_{\theta}(L(G_{\theta })(p)_{\alpha})$. Then $z=\langle 1,\langle n,e\rangle\rangle$ and for some $A\in\theta (n)$ we have $e:A\to L(G_{\theta})(p)_{\alpha}$. By induction hypothesis, for each $a\in A$ there is an $(ea,\theta ,p)$-dedicated sight $\sigma (a)$. Then $(A,\sigma )$ is $(z,\theta ,p)$-dedicated.

The case of limit ordinals is obvious.

Conversely, suppose that $S$ is a $(z,\theta ,p)$-dedicated sight. If $S={\sf NIL}$, then $z\in\{ 0\}\wedge p$ so $z\in L(G_{\theta})(p)_0$. If $S=(A,\sigma )$ then $z=\langle 1,\langle n,e\rangle\rangle$ and for some $A\in\theta (n)$, $\sigma (a)$ is $(ea,\theta ,p)$-dedicated for each $a\in A$. By induction hypothesis, for each $a\in A$ there is some $\alpha _a<\omega _1$ such that $ea\in L(G_{\theta})(p)_{\alpha _a}$. Then $z\in L(G_{\theta})(p)_{\beta}$ where $\beta =(\bigcup_{a\in A}\alpha _a)+1$, as is easy to see.\end{proof}
\begin{corollary}\label{leqLchar} For ${\cal A}\in {\cal P}^*{\cal P}(\mathbb{N})$, $B\subseteq\mathbb{N}$ nonempty and $\theta :B\to {\cal P}^*{\cal P}(\mathbb{N})$ we have: $G_{\cal A}\leq _LG_{\theta}$ if and only if there exists a number $z$ such that for every $A\in {\cal A}$ there exists a $(z,\theta ,A)$-dedicated sight.\end{corollary}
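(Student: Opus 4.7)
The plan is to unfold both sides of the biconditional by means of Propositions~\ref{Lchar} and~\ref{Lsightschar}. By definition and~\ref{Lchar}, $G_{\cal A}\leq _LG_\theta$ is equivalent to the existence of a single recursive index $k$ tracking $G_{\cal A}\leq L'(G_\theta)$, i.e., for every $p\subseteq\mathbb{N}$ and every $e\in G_{\cal A}(p)=\bigcup_{A\in {\cal A}}(A\to p)$ one has $ke\in L'(G_\theta)(p)$. Proposition~\ref{Lsightschar} restates the latter as the existence of a $(ke,\theta ,p)$-dedicated sight, so the whole statement becomes an equivalence between two sight-based conditions, differing only in the uniformity of the witness.

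For ($\Rightarrow$), let $i$ be a standard index for the identity function on $\mathbb{N}$; then $i\in A\to A\subseteq G_{\cal A}(A)$ for every $A\in {\cal A}$. Applying $k$ yields $ki\in L'(G_\theta)(A)$ for every such $A$, and by Proposition~\ref{Lsightschar} this supplies a $(ki,\theta ,A)$-dedicated sight. Taking $z=ki$, which does not depend on $A$, gives the required single number.

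For ($\Leftarrow$), assume $z$ as stated; Proposition~\ref{Lsightschar} then gives $z\in\bigcap_{A\in {\cal A}}L'(G_\theta)(A)$. Since $L'(G_\theta)$ is, by~\ref{Lchar}, isomorphic in $M$ to the local operator $L(G_\theta)\in {\rm Lo}$, it satisfies condition $E_1$: there is a recursive index $d$ with $dxz\in L'(G_\theta)(p)$ whenever $x:A\to p$. Setting $k=\lambda x.dxz$ then tracks $G_{\cal A}\leq L'(G_\theta)$, as required.

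The only substantive point, and the expected obstacle, is the uniformity: a single $z$ must work across all $A\in {\cal A}$ simultaneously, and a single $k$ across all $A$, $p$ and $e$. In the $(\Rightarrow)$ direction this uniformity is extracted by evaluating $k$ at the identity index; in the $(\Leftarrow)$ direction it is produced by the recursive monotonicity built into the local operator $L(G_\theta)$. A more syntactic alternative in the latter direction, staying entirely inside the language of sights, is to define by the recursion theorem a partial recursive $g_x$ satisfying $g_x\langle 0,a\rangle\simeq\langle 0,xa\rangle$ and $g_x\langle 1,\langle n,e\rangle\rangle\simeq\langle 1,\langle n,\lambda a.g_x(ea)\rangle\rangle$, and then to verify by induction on the given $(z,\theta ,A)$-dedicated sight that $g_x(z)$ is the root of a $(g_x(z),\theta ,p)$-dedicated sight; the tracker $kx=g_x(z)$ then does the job.
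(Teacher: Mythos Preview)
Your proof is correct and follows essentially the same route as the paper's. The paper is more concise because it invokes Proposition~\ref{Gembed}(b) directly---which already states that $G_{\cal A}\leq f$ iff $\bigcap_{A\in{\cal A}}f(A)$ is inhabited---whereas you re-derive that equivalence inline (the identity index for $(\Rightarrow)$, monotonicity of $L'(G_\theta)$ for $(\Leftarrow)$) before applying Proposition~\ref{Lsightschar}.
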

\begin{proof} By~\ref{Gembed}, $G_{\cal A}\leq L'(G_{\theta})$ if and only if $\bigcap_{A\in {\cal A}}L'(G_{\theta})(A)$ is nonempty, which, by \ref{Lsightschar}, is equivalent to the given statement.\end{proof}
\begin{corollary}\label{leqLchar2} For $B,B'\subseteq\mathbb{N}$ nonempty,  $\theta :B\to {\cal P}^*{\cal P}(\mathbb{N})$ and $\zeta :B'\to {\cal P}^*{\cal P}(\mathbb{N})$ we have: $G_{\zeta}\leq _LG_{\theta}$ if and only if there is a partial recursive function $f$ defined on $B'$, and for every $n\in B'$ an $(f(n),\theta ,\zeta (n))$-dedicated sight.\end{corollary}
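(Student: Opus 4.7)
The plan is to reduce this to Corollary \ref{leqLchar} applied uniformly over the indices $n \in B'$. Recall that $G_\zeta = \bigvee_{n \in B'} G_{\zeta(n)}$, so unpacking definitions,
$$G_\zeta(p) \;=\; \{\langle n,e\rangle \,|\, n\in B',\ \exists A\in\zeta(n)\,(e:A\to p)\}.$$
By Proposition~\ref{Lchar}, $G_\zeta \leq_L G_\theta$ is equivalent to the existence of an index $r$ with $r : G_\zeta(p) \to L'(G_\theta)(p)$ for all $p\subseteq\mathbb{N}$. I will produce $f$ from such an $r$ in one direction, and $r$ from $f$ and the given sights in the other, translating between membership in $L'(G_\theta)(A)$ and the existence of dedicated sights via Proposition~\ref{Lsightschar}.

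For the forward direction, suppose $r \in [G_\zeta \leq L'(G_\theta)]$. Let $i$ be a fixed index for the identity partial recursive function. For any $n \in B'$ and any $A \in \zeta(n)$, taking $A$ itself as the witness, $\langle n,i\rangle \in G_\zeta(A)$, hence $r\langle n,i\rangle \in L'(G_\theta)(A)$. Thus the partial recursive function $f(n) := r\langle n,i\rangle$ is defined on $B'$ and satisfies $f(n) \in \bigcap_{A \in \zeta(n)} L'(G_\theta)(A)$; by Proposition~\ref{Lsightschar} this gives, for each $n \in B'$ and each $A \in \zeta(n)$, an $(f(n),\theta,A)$-dedicated sight.

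For the backward direction, suppose $f$ is partial recursive on $B'$ and the dedicated sights exist as stated. Proposition~\ref{Lsightschar} yields $f(n) \in L'(G_\theta)(A)$ for all $n \in B'$ and $A \in \zeta(n)$. Pick an index $a \in E_1(L'(G_\theta))$ witnessing that $L'(G_\theta)\in M$. Then for any $\langle n,e\rangle \in G_\zeta(p)$, with $e:A\to p$ and $A\in\zeta(n)$, we have $a\cdot e : L'(G_\theta)(A) \to L'(G_\theta)(p)$ and therefore $(a\cdot e)(f(n)) \in L'(G_\theta)(p)$. An index $r$ for $\lambda z.(a\cdot (z)_1)(f((z)_0))$ is then a realizer of $G_\zeta \leq L'(G_\theta)$, which is $G_\zeta \leq_L G_\theta$. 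No step is really the main obstacle; the only thing to keep straight is the uniformity of $f$ in $n$, which is exactly what the partial recursive function packages, and the mild abuse of notation whereby ``an $(f(n),\theta,\zeta(n))$-dedicated sight'' is to be read as: for every $A\in\zeta(n)$, an $(f(n),\theta,A)$-dedicated sight.
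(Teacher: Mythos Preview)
Your argument is correct and is precisely the natural unpacking the paper leaves implicit: the corollary is stated without proof, and your derivation---instantiating $p=A$ with the identity index to extract $f(n)=r\langle n,i\rangle$ in the forward direction, and using monotonicity of $L'(G_\theta)$ to rebuild a realizer in the backward direction---is exactly how one obtains it from Corollary~\ref{leqLchar} and Proposition~\ref{Lsightschar}. Your closing remark on the abuse of notation (reading ``$(f(n),\theta,\zeta(n))$-dedicated'' as ``for every $A\in\zeta(n)$, $(f(n),\theta,A)$-dedicated'') is the correct reading.
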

\medskip

\noindent To any sight $S$ we associate a well-founded tree ${\rm Tr}(S)$ of coded sequences of natural numbers together with a specified subset of its set of leaves (which we will call {\em good leaves}) as follows:

If $S={\sf NIL}$ then ${\rm Tr}(S)=\{\langle\rangle\}$ and $\langle\rangle$ is a good leaf of $S$.

If $S=(\emptyset ,\emptyset )$ then ${\rm Tr}(S)=\{\langle\rangle\}$ and ${\rm Tr}(S)$ has no good leaf.

If $S=(A,\sigma )$ with $A\neq\emptyset$ then ${\rm Tr}(S)=\{ \langle a\rangle {\ast} t\, |\, a\in A,t\in {\rm Tr}(\sigma (a))\}$ and $\langle a\rangle {\ast} t$ is a good leaf of ${\rm Tr}(S)$ if and only if $t$ is a good leaf of ${\rm Tr}(\sigma (a))$.

We shall often abuse language and talk about the ``(good) leaves of a sight $S$'' instead of ${\rm Tr}(S)$.

We call a sight {\em degenerate\/} if not all its leaves are good.

Given a sight $S$ and $s\in {\rm Tr}(S)$, we write ${\rm Out}(s)$ (or ${\rm Out}_S(s)$ if we wish to emphasize the sight $s$ lives in) for the set $\{ a\in\mathbb{N}\, |\, s{\ast}\langle a\rangle\in {\rm Tr}(S)\}$.
\medskip

\noindent The following proposition follows by an easy induction on sights.
\begin{proposition}\label{degenerate->empty}If a degenerate sight is $(z,\theta ,p)$-dedicated then $\emptyset\in\bigcup_n\theta (n)$.\end{proposition}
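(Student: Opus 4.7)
The proof is a straightforward structural induction on the sight $S$, using the inductive definition of ``$(z,\theta,p)$-dedicated'' (Definition~\ref{dedicateddef}) and the definition of ``degenerate'' via ${\rm Tr}(S)$.

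First I would handle the base case $S = {\sf NIL}$. Here ${\rm Tr}(S) = \{\langle\rangle\}$ and the single leaf $\langle\rangle$ is good by definition, so $S$ is not degenerate and the implication holds vacuously.

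For the inductive step, assume $S = (A,\sigma)$ is degenerate and $(z,\theta,p)$-dedicated. By the definition of dedication we know $z = \langle 1,\langle n,e\rangle\rangle$ with $A \in \theta(n)$, and for every $a\in A$ the value $ea$ is defined and $\sigma(a)$ is $(ea,\theta,p)$-dedicated. Now split on whether $A$ is empty. If $A = \emptyset$, then $\emptyset = A \in \theta(n) \subseteq \bigcup_n \theta(n)$ and we are done. If $A \neq \emptyset$, then by the construction of ${\rm Tr}(S)$ every leaf of $S$ has the form $\langle a\rangle \ast t$ for some $a \in A$ and $t \in {\rm Tr}(\sigma(a))$, and such a leaf is good iff $t$ is a good leaf of $\sigma(a)$. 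Since $S$ has a bad leaf, some $\sigma(a_0)$ must itself be degenerate. Apply the induction hypothesis to $\sigma(a_0)$, which is $(ea_0,\theta,p)$-dedicated, to conclude $\emptyset \in \bigcup_n \theta(n)$.

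There is no real obstacle here: the key observation is simply that ``bad leaves'' can only be introduced at a node $(A,\sigma)$ with $A = \emptyset$, and the dedication condition forces any such $A$ to belong to some $\theta(n)$. The well-foundedness built into Definition~\ref{sightsdef} makes the induction legitimate without any further care.
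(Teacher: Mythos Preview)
Your proof is correct and is exactly the ``easy induction on sights'' that the paper invokes without spelling out. The case split on whether $A=\emptyset$ and the observation that a bad leaf of $(A,\sigma)$ with $A\neq\emptyset$ must come from a degenerate $\sigma(a_0)$ is precisely the content of that induction.
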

\begin{definition}\label{supportingdef}\em Let $B\subseteq\mathbb{N}$ nonempty, $\theta :B\to {\cal P}^*{\cal P}(\mathbb{N})$, $p\subseteq\mathbb{N}$. For a number $w$, we call a sight $S$ $(w,\theta ,p)$-{\em supporting\/} if \begin{itemize}\item[] whenever $s$ is a good leaf of $S$, $ws\in\{ 0\}\wedge p$
\item[] whenever $s$ is not a good leaf of $S$, $ws=\langle 1,n\rangle$ with $n\in B$ and ${\rm Out}_S(s)\in\theta (n)$\end{itemize}\end{definition}
\begin{proposition}\label{dedsup} There are partial recursive functions $F$ and $G$ such that for each $B\subseteq\mathbb{N}$ nonempty, $\theta :B\to {\cal P}^*{\cal P}(\mathbb{N})$, $p\subseteq\mathbb{N}$, sight $S$ and $z\in\mathbb{N}$:\begin{rlist}
\item If $S$ is $(z,\theta ,p)$-dedicated then $F(z)$ is defined and $S$ is $(F(z),\theta ,p)$-supporting.
\item If $S$ is $(w,\theta ,p)$-supporting then $G(w)$ is defined and $S$ is $(G(w),\theta ,p)$-dedicated.\end{rlist}\end{proposition}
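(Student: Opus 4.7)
The plan is to construct both $F$ and $G$ by Kleene's recursion theorem, so that they mirror the inductive structure of sights, and then to verify (i) and (ii) by parallel inductions on the well-founded sight $S$.

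For $F$: I want $F(z)$ to be the index of a partial recursive function which, on input a node $s\in {\rm Tr}(S)$, walks along $s$ through the dedication data encoded in $z$ and returns the value required of a supporting index at $s$. Explicitly, $F(z)\langle\rangle$ should equal $z$ when $z=\langle 0,x\rangle$, and should equal $\langle 1,n\rangle$ when $z=\langle 1,\langle n,e\rangle\rangle$; for a longer node $\langle a\rangle\ast t$ (where $z$ is necessarily of the second form), $F(z)(\langle a\rangle\ast t)\simeq F(ea)t$. These clauses define a partial recursive functional $\Phi(j,z,s)$ in which $j$ is used in place of $F$; the recursion theorem yields $j^*$ with $j^*\langle z,s\rangle\simeq \Phi(j^*,z,s)$, and then the $s$-$m$-$n$ theorem produces the desired total recursive $F$ satisfying $F(z)s\simeq j^*\langle z,s\rangle$.

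For $G$: given $w$, compute $w\langle\rangle$. If it equals $\langle 0,x\rangle$, output $\langle 0,x\rangle$. If it equals $\langle 1,n\rangle$, output $\langle 1,\langle n,e\rangle\rangle$, where $e$ is an index for the map $a\mapsto G(w_a)$ and $w_a$ is an $s$-$m$-$n$-produced index for the shifted function $t\mapsto w(\langle a\rangle\ast t)$. The recursive reference to $G$ inside $e$ is again absorbed by a further application of the recursion theorem.

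With $F$ and $G$ in hand, both (i) and (ii) follow by induction on $S$. The base $S={\sf NIL}$ is immediate from the definitions: there $\langle\rangle$ is the unique (good) leaf, $z=\langle 0,x\rangle$, and $F(z)\langle\rangle=z\in\{0\}\wedge p$, and symmetrically for $G$. For $S=(A,\sigma)$ one feeds the inductive hypothesis on each $\sigma(a)$ through the defining equations of $F$ (resp.\ $G$), using the observations that ${\rm Out}_S(\langle a\rangle\ast t)={\rm Out}_{\sigma(a)}(t)$ and that the good leaves of $S$ below the root are exactly the shifted good leaves of the $\sigma(a)$; at the root, $F(z)\langle\rangle=\langle 1,n\rangle$ and ${\rm Out}_S(\langle\rangle)=A\in\theta(n)$ by dedication of $z$. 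The degenerate subcase $A=\emptyset$ requires no data past the root and is handled automatically by the base-level behaviour of $F$ and $G$. The main obstacle is the clean bookkeeping for the recursion-theorem application that ties $F$ and $G$ to themselves at the recursive step; once that is set up, both inductions are routine.
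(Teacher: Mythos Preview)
Your proposal is correct and follows essentially the same route as the paper: both $F$ and $G$ are defined via the recursion theorem by exactly the clauses you give (the paper writes $F(z)s\simeq F(e(s)_0)\langle (s)_1,\ldots\rangle$ and $G(w)\simeq\langle 1,\langle n,\lambda a.G(\lambda s.w(\langle a\rangle\ast s))\rangle\rangle$, which are your equations), and the verifications are the same inductions on $S$ that you sketch.
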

\begin{proof} i) Note that from the definition of ``$S$ is $(w,\theta ,p)$-supporting'' it follows that if $H$ is a partial recursive function such that for each $a\in A$, $H(a)$ is defined and the sight $\sigma (a)$ is $(H(a),\theta ,p)$-supporting, and
$$w\; =\;\lambda s.\left\{\begin{array}{cl}\langle 1,n\rangle & \text{if }s=\langle\rangle \\
H((s)_0)\langle (s)_1,\ldots ,(s)_{{\rm lh}(s)-1}\rangle & \text{otherwise}\end{array}\right.$$
then the sight $(A,\sigma )$ is $(w,\theta ,p)$-supporting: $s$ is a good leaf of $(A,\sigma )$ if and only if $\langle (s)_1,\ldots ,(s)_{{\rm lh}(s)-1}\rangle$ is a good leaf of $\sigma ((s)_0)$.

Therefore, using the recursion theorem let $F$ be partial recursive such that
$$F(z)s\;\simeq\;\left\{\begin{array}{cl} z & \text{if }z=\langle 0,y\rangle \\
\left\{\begin{array}{cl}\langle 1,n\rangle & \text{if }s=\langle\rangle \\
F(e(s)_0)\langle (s)_1,\ldots ,(s)_{{\rm lh}(s)-1}\rangle & \text{else}\end{array}\right\} & \text{if }z=\langle 1,\langle n,e\rangle\rangle \end{array}\right.$$
The proof is now by induction on $S$: if $S={\sf NIL}$ and $S$ is $(z,\theta ,p)$-dedicated then $z=\langle 0,y\rangle$, $y\in p$, $F(z)\langle\rangle =z$ and $S$ is $(F(z),\theta ,p)$-supporting. If $S=(A,\sigma )$ is $(z,\theta ,p)$-dedicated then $z=\langle 1,\langle n,e\rangle\rangle$ etc., and for each $a\in A$ by induction hypothesis $F(e(s)_0)$ is defined and $\sigma (a)$ is $(F(e(s)_0),\theta ,p)$-supporting. By our first remark it now follows that $S=(A,\sigma )$ is $(F(z),\theta ,p)$-supporting.
\medskip

\noindent ii) Here we remark that if $A\in\theta (n)$ and for each $a\in A$, $ea$ is defined and $\sigma (a)$ is $(ea,\theta ,p)$-dedicated, then $(A,\sigma )$ is $(\langle 1,\langle n,e\rangle\rangle ,\theta ,p)$-dedicated.

Also, note that if $(A,\sigma )$ is $(w,\theta ,p)$-supporting then for each $a\in A$, $\sigma (a)$ is $(\lambda s.w(\langle a\rangle {\ast}s),\theta ,p)$-supporting.

Define $G$, using the recursion theorem, by
$$G(w)\;\simeq\;\left\{\begin{array}{cl}\langle 0,y\rangle & \text{if }w\langle\rangle =\langle 0,y\rangle \\
\langle 1,\langle n,\lambda a.G(\lambda s.w(\langle a\rangle {\ast}s))\rangle\rangle & \text{if }w\langle\rangle =\langle 1,n\rangle \end{array}\right.$$
Proof, again by induction on $S$: suppose $S$ is $(w,\theta ,p)$-supporting. If $S={\sf NIL}$ then $w\langle\rangle =\langle 0,y\rangle$, $y\in p$ and $G(w)=\langle 0,y\rangle$, so $S$ is $(G(w),\theta ,p)$-dedicated.

If $S=(A,\sigma )$ then $w\langle\rangle =\langle 1,n\rangle$ for an $n$ such that $A\in\theta (n)$. By our remark, for each $a\in A$ the sight $\sigma (a)$ is $(\lambda s.w(\langle a\rangle {\ast}s),\theta ,p)$-supporting hence by induction hypothesis, $\sigma (a)$ is $(G(\lambda s.w(\langle a\rangle {\ast}s)),\theta ,p)$-dedicated. Then if $e=\lambda a.G(\lambda s.w(\langle a\rangle {\ast}s))$, $(A,\sigma )$ is $(\langle 1,\langle n,e\rangle\rangle ,\theta ,p)$-dedicated; i.e., $(A,\sigma )$ is $(G(w),\theta ,p)$-dedicated, as desired.\end{proof}
\begin{corollary}\label{dedsupcorollary} For $\theta :B\to {\cal P}^*{\cal P}(\mathbb{N})$, the element $L'(G_{\theta})$ of $M^*$ is, in $M^*$, isomorphic to the function which sends $p\subseteq\mathbb{N}$ to
$$\{ z\in\mathbb{N}\, |\,\text{there is a $(z,\theta ,p)$-supporting sight}\}$$\end{corollary}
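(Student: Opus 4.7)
The plan is to let $H$ denote the function $p\mapsto \{z\in\mathbb{N}\mid \text{there is a $(z,\theta,p)$-supporting sight}\}$ and to show that $H$ lives in $M^*$ and is isomorphic to $L'(G_\theta)$ in the preorder $(M^*,\leq)$. The essential content is already contained in Propositions~\ref{Lsightschar} and~\ref{dedsup}; what remains is bookkeeping.

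First I would check that $H\in M^*$. The non-emptiness condition $\bigcup_p H(p)\neq\emptyset$ is immediate: pick any $n\in B$ and any $A\in\theta(n)$; then the sight $(A,\sigma)$ with $\sigma(a)={\sf NIL}$ for all $a\in A$ is $(w,\theta,\mathbb{N})$-supporting for a suitable $w$. For monotonicity, I would produce a single index that tracks $(p\to q)\to (H(p)\to H(q))$: given a realizer $d:p\to q$ and a $(z,\theta,p)$-supporting sight $S$, define
\[
z' \;=\; \lambda s.\left\{\begin{array}{cl} \langle 0,d y\rangle & \text{if }zs=\langle 0,y\rangle \\ zs & \text{if }zs=\langle 1,n\rangle\end{array}\right.
\]
Then $S$ is $(z',\theta,q)$-supporting, and $z'$ is computed uniformly from $d$ and $z$, so $H\in M$; together with non-emptiness this gives $H\in M^*$.

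Next, to establish the isomorphism, I would simply combine Propositions~\ref{Lsightschar} and~\ref{dedsup}. By~\ref{Lsightschar}, $L'(G_\theta)(p)$ equals the set of $z$ admitting a $(z,\theta,p)$-dedicated sight. If $z\in L'(G_\theta)(p)$, pick such a dedicated sight $S$; by~\ref{dedsup}(i), $F(z)$ is defined and $S$ is $(F(z),\theta,p)$-supporting, so $F(z)\in H(p)$. Conversely, if $z\in H(p)$ and $S$ is $(z,\theta,p)$-supporting, by~\ref{dedsup}(ii), $G(z)$ is defined and $S$ is $(G(z),\theta,p)$-dedicated, so $G(z)\in L'(G_\theta)(p)$. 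Thus $F$ realizes $L'(G_\theta)\leq H$ and $G$ realizes $H\leq L'(G_\theta)$; since both are partial recursive and independent of $p$, they are valid tracker indices.

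I do not anticipate any real obstacle: the work has already been done in proving~\ref{dedsup}, and verifying that $H$ is in $M^*$ together with the two-line realizer argument above suffices. The only subtle point is to make sure the tracker of monotonicity for $H$ is uniform in $p,q$, but this is visible from the explicit formula for $z'$ above.
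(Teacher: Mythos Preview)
Your proposal is correct and is precisely the intended argument: the paper states this result as an immediate corollary of Propositions~\ref{Lsightschar} and~\ref{dedsup} without writing out a proof, and your verification that $H\in M^*$ together with the observation that the partial recursive functions $F$ and $G$ from~\ref{dedsup} serve as uniform trackers for the two inequalities is exactly how one fills in the details.
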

The following corollary shows that the local operators $j_f$ from~\ref{HylandTuring} are not basic, in fact are not majorizing any nontrivial basic local operator.
\begin{corollary}\label{Hylandnotbasic} Suppose ${\cal A}\in {\cal P}^*{\cal P}(\mathbb{N})$ and $f:\mathbb{N}\to\mathbb{N}$ a function. Let $j_f$ be the least local operator which forces $f$ to be total, as in~\ref{HylandTuring}. Then if $G_{\cal A}\leq _Lj_f$, $L(G_{\cal A})$ is the identity local operator.\end{corollary}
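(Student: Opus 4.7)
The plan is to exploit the very special, linear shape of $(z,\theta,p)$-dedicated sights for the family $\theta:\mathbb{N}\to{\cal P}^*{\cal P}(\mathbb{N})$ given by $\theta(n)=\{\{f(n)\}\}$, which is exactly the family with $j_f = L(G_\theta)$ per the discussion preceding Theorem~\ref{HylandTuring}.

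First I would apply Corollary~\ref{leqLchar}: the hypothesis $G_{\cal A}\leq_L j_f$ means there is a single number $z$ such that for every $A\in{\cal A}$ there exists a $(z,\theta,A)$-dedicated sight. The key observation is that because each $\theta(n)$ is a singleton $\{\{f(n)\}\}$, Definition~\ref{dedicateddef} forces every such sight to be a linear, unary-branching chain: at an internal node $(B,\sigma)$ with $z=\langle 1,\langle n,e\rangle\rangle$, one has $B=\{f(n)\}$, the unique child is $\sigma(f(n))$, and its governing value is $e(f(n))$---a value determined by $z$ alone, independent of $A$. In particular, the shape of the sight and, crucially, the terminal value $y$ appearing at its leaf (when $z$ eventually unfolds to $\langle 0,y\rangle$) depend only on $z$.

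I would then prove, by structural induction on a chosen $(z,\theta,A_0)$-dedicated sight $S_{A_0}$ for some fixed $A_0\in{\cal A}$, the following claim: if for every $A\in{\cal A}$ there exists a $(z,\theta,A)$-dedicated sight, then $\bigcap{\cal A}\neq\emptyset$. In the base case $S_{A_0}={\sf NIL}$, so $z=\langle 0,y\rangle$ with $y\in A_0$; but the form of $z$ forces every $(z,\theta,A)$-dedicated sight to be ${\sf NIL}$ as well, whence $y\in A$ for all $A\in{\cal A}$, and thus $y\in\bigcap{\cal A}$. In the inductive case $z=\langle 1,\langle n,e\rangle\rangle$, set $z'=e(f(n))$: the sub-sight of $S_{A_0}$ at $f(n)$ is $(z',\theta,A_0)$-dedicated, and for every $A\in{\cal A}$ the dedicated sight for $A$ yields a $(z',\theta,A)$-dedicated sub-sight at $f(n)$. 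The induction hypothesis applied with $z'$ in place of $z$ then gives $\bigcap{\cal A}\neq\emptyset$. Finally I would invoke Proposition~\ref{trivnotnot}~iii): $\bigcap{\cal A}\neq\emptyset$ is precisely the condition for $L(G_{\cal A})={\sf id}$.

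The main delicate point is framing the induction so that the witness $y$ produced at the leaf works uniformly for all $A\in{\cal A}$; everything hinges on the fact, peculiar to this $\theta$, that the length of any $(z,\theta,p)$-dedicated sight and its terminal $y$ are determined by $z$ and $f$ alone and are insensitive to $p$. Once this is articulated the induction is routine, and no deeper property of $f$ (such as recursiveness) is needed.
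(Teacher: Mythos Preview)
Your proof is correct and rests on the same key observation as the paper's: for $\theta(n)=\{\{f(n)\}\}$ a $(z,\theta,p)$-dedicated sight is a unary chain whose shape and terminal value are completely determined by $z$ and $f$, independent of $p$. The paper packages this as a separate uniqueness claim (any two $(z,\rho_f,\mathbb{N})$-dedicated sights coincide), then invokes the supporting-sight machinery of Proposition~\ref{dedsup} to read off a common leaf value $y\in\bigcap{\cal A}$; your single induction on $S_{A_0}$ does the same job without that detour, which is marginally more elementary but not genuinely different in substance.
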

\begin{proof} Let $\rho _f:n\mapsto\{\{ f(n)\}\}$ be as just above~\ref{HylandTuring}, so $G_{\cal A}\leq j_f$ if and only if $G_{\cal A}\leq _L\rho _f$. First, we prove the following
\begin{quote}{\sl Claim: given $z\in\mathbb{N}$ and sights $S$ and $T$ such that both $S$ and $T$ are $(z,\rho _f,\mathbb{N})$-dedicated, then $S=T$.}\end{quote}
We prove the Claim by induction on $S$. If $S={\sf NIL}$ then $z =\langle 0,y\rangle$ for some $y$. It follows that also $T={\sf NIL}$. If $S=(A,\sigma )$ then $z =\langle 1,\langle n,e\rangle\rangle$, $A=\{ f(n)\}$ and $\sigma (f(n))$ is $(ef(n),\rho _f,\mathbb{N})$-dedicated. Similarly, $T=(\{ f(n)\} ,\tau )$ and $\tau (f(n))$ is $(ef(n),\rho _f,\mathbb{N})$-dedicated. By induction hypothesis, $\sigma (f(n))=\tau (f(n))$ whence $S=T$, as desired. This proves the Claim.
\medskip

\noindent Now suppose $G_{\cal A}\leq _L\rho _f$. By~\ref{leqLchar}, there is a number $z$ and, for each $A\in {\cal A}$, a $(z,\rho _f,A)$-dedicated sight $S_A$. By the Claim, all $S_A$ are equal, say $S$. Since $\rho _f(n)$ never contains the empty set, $S$ is nondegenerate and by~\ref{dedsup}, it is $(F(z),\rho _f, A)$-supporting for each $A\in {\cal A}$. Take any good leaf $d$ of $S$. Then $F(z)d=\langle 0,y\rangle$ with $d\in\bigcap {\cal A}$. By~\ref{trivnotnot} iii), $L(G_{\cal A})$ is the identity local operator, as claimed.\end{proof}
\begin{definition}\label{jip}\em Suppose ${\cal A}_1,\ldots ,{\cal A}_n\in {\cal P}^*{\cal P}(\mathbb{N})$. We say that the ${\cal A}_i$ have the {\em joint intersection property\/} if for all $A_1\in {\cal A}_1,\ldots ,A_n\in {\cal A}_n$, $A_1\cap\cdots\cap A_n\neq\emptyset$.

Similarly, we say that ${\cal A}\in {\cal P}^*{\cal P}(\mathbb{N}$ has the {\em $n$-intersection property\/} if for all $A_1,\ldots ,A_n\in {\cal A}$, $A_1\cap\cdots \cap A_n\neq\emptyset$.

We say that a sight $S$ is {\em on\/} ${\cal A}$ if, inductively, $S={\sf NIL}$ or $S=(A,\sigma )$, $A\in {\cal A}$ and for all $a\in A$ the sight $\sigma (a)$ is on $\cal A$. This means that for every $d\in {\rm Tr}(S)$ which is not a good leaf, ${\rm Out}_S(d)\in {\cal A}$. We say that $S$ is {\em on\/} $\theta :B\to {\cal P}^*{\cal P}(\mathbb{N})$ if $S$ is on $\bigcup_{n\in B}\theta (n)$.\end{definition}
\begin{proposition}\label{jipprop} Suppose ${\cal A}_1,\ldots ,{\cal A}_n$ have the joint intersection property. Then if $S_i$ is a sight on ${\cal A}_i$ for each $i$, there is a coded sequence $d$ such that\begin{itemize}
\item[] $d\in {\rm Tr}(S_i)$ for each $i$, and
\item[] $d$ is a good leaf of some $S_i$.\end{itemize}\end{proposition}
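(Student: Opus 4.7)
The plan is to prove Proposition~\ref{jipprop} by well-founded induction on $n$-tuples $(S_1,\ldots,S_n)$ of sights, mirroring the inductive clauses of Definition~\ref{sightsdef} and the ``on $\cal A$'' clause of Definition~\ref{jip}. The induction is legitimate because sights are by construction well-founded trees (each carries an ordinal rank), so the componentwise ``immediate subsight'' ordering on $n$-tuples is well-founded. I would keep $n$ fixed throughout; the joint intersection property is used exactly once in the inductive step, where it provides a common successor.

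The base case handles the situation in which some $S_{i_0}={\sf NIL}$. Then ${\rm Tr}(S_{i_0})=\{\langle\rangle\}$ and $\langle\rangle$ is by definition a good leaf of $S_{i_0}$. Since the empty sequence $\langle\rangle$ is the root of the tree and hence belongs to every ${\rm Tr}(S_i)$, setting $d:=\langle\rangle$ satisfies both clauses at once.

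For the inductive step, assume no $S_i$ is ${\sf NIL}$, so each $S_i=(A_i,\sigma_i)$ with $A_i\in{\cal A}_i$ (from ``$S_i$ on ${\cal A}_i$''). By the joint intersection property, pick $a\in A_1\cap\cdots\cap A_n$. Each $\sigma_i(a)$ is again a sight on ${\cal A}_i$, so the inductive hypothesis applied to $(\sigma_1(a),\ldots,\sigma_n(a))$ yields a sequence $d'$ lying in every ${\rm Tr}(\sigma_i(a))$ and being a good leaf of some $\sigma_j(a)$. Then $d:=\langle a\rangle\ast d'$ lies in every ${\rm Tr}(S_i)$ and is a good leaf of $S_j$, directly from the recursive definitions of ${\rm Tr}$ and ``good leaf.''

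There is no real obstacle here: the only conceptual point is that JIP is precisely the hypothesis needed to choose a single element $a$ shared by all the $A_i$ so that the induction can descend in lockstep through all $n$ sights simultaneously. The argument is short and purely combinatorial, with the ``work'' done by the well-foundedness of the subsight relation.
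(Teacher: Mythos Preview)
Your proof is correct and follows essentially the same argument as the paper's: the paper inducts on $S_1$ (with the other $S_i$ universally quantified), handles the case that some $S_i={\sf NIL}$ by taking $d=\langle\rangle$, and otherwise picks $a\in\bigcap_i A_i$ by the joint intersection property and applies the induction hypothesis to $(\sigma_1(a),\ldots,\sigma_n(a))$. Your framing as well-founded induction on $n$-tuples is a cosmetic variant of the same argument.
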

\begin{proof} Induction on $S_1$. If $S_1={\sf NIL}$ then we can take $\langle\rangle$ for $d$. Similarly, if $S_i={\sf NIL}$ for some $i\geq 2$ we can take $\langle\rangle$ for $d$. So assume each $S_i$ is $(A_i,\sigma _i)$. By the joint intersection property, take $a\in\bigcap_iA_i$. By the induction hypothesis, there is a $d'$ such that $d'\in {\rm Tr}(\sigma _i(a))$ for each $i$, and $d'$ is a good leaf of some $\sigma _i(a)$. Then $\langle a\rangle\ast d'$ satisfies the proposition.\end{proof}
\begin{corollary}\label{nintcor} Suppose $\cal A$ has the $n$-intersection property. Then for every $n$-tuple of sights $S_1,\ldots ,S_n$ on $\cal A$ there is a sequence $d\in\bigcap_i{\rm Tr}(S_i)$ such that $d$ is a good leaf of at least one $S_i$.\end{corollary}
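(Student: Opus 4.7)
The plan is to derive Corollary \ref{nintcor} as an essentially immediate specialization of Proposition \ref{jipprop}. The $n$-intersection property for a single family $\mathcal{A}$ is exactly the joint intersection property for the $n$-fold constant sequence $\mathcal{A}_1 = \mathcal{A}_2 = \cdots = \mathcal{A}_n = \mathcal{A}$: any choice of $A_i \in \mathcal{A}_i$ amounts to picking $n$ (not necessarily distinct) members of $\mathcal{A}$, whose intersection is nonempty by hypothesis.

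Given an $n$-tuple of sights $S_1, \ldots, S_n$ on $\mathcal{A}$, each $S_i$ is then automatically a sight on $\mathcal{A}_i = \mathcal{A}$, so the hypotheses of Proposition \ref{jipprop} are met. Applying that proposition directly yields a coded sequence $d$ with $d \in \mathrm{Tr}(S_i)$ for every $i$ and $d$ a good leaf of some $S_i$, which is exactly the conclusion.

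There is no real obstacle here; the only thing worth noting is that we are silently using the fact that the joint intersection property and the notion of ``sight on $\mathcal{A}_i$'' are both insensitive to repetition of indices, which is immediate from Definition \ref{jip}. Accordingly the proof is a one-line reduction: specialize $\mathcal{A}_1 = \cdots = \mathcal{A}_n = \mathcal{A}$ in Proposition \ref{jipprop}.
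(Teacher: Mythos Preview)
Your proposal is correct and matches the paper's approach: the corollary is stated immediately after Proposition~\ref{jipprop} with no separate proof, precisely because it is the specialization $\mathcal{A}_1=\cdots=\mathcal{A}_n=\mathcal{A}$ that you describe.
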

\begin{definition}\label{rdefdef}\em For a sight $S$ and a number $z$, we say that $z$ is {\em $r$-defined on\/} $S$ if for some $\theta$, $S$ is $(z,\theta ,\mathbb{N})$-dedicated.\end{definition}
\begin{proposition}\label{rdefprop} Suppose $S$ and $T$ are sights and $d=\langle d_1,\ldots ,d_n\rangle$ is an element of ${\rm Tr}(S)\cap {\rm Tr}(T)$. If some $z$ is r-defined on both $S$ and $T$ and $d$ is a good leaf of $S$,  then $d$ is also a good leaf of $T$.\end{proposition}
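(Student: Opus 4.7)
The plan is to do induction on the length of the sequence $d$, exploiting the fact that the witness $z$ rigidly dictates the local ``shape'' that any sight on which it is r-defined must take, so that $S$ and $T$ are forced to agree on the branch traced by $d$.

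First I would handle the base case $d=\langle\rangle$. Since $\langle\rangle$ is a good leaf of $S$, inspecting the three possibilities in the definition of ${\rm Tr}(S)$ forces $S={\sf NIL}$ (the cases $S=(\emptyset,\emptyset)$ and $S=(A,\sigma)$ with $A\neq\emptyset$ are ruled out: in the first $\langle\rangle$ is not good, and in the second $\langle\rangle\notin{\rm Tr}(S)$). The hypothesis that $z$ is r-defined on $S={\sf NIL}$ then forces $z=\langle 0,y\rangle$ for some $y$. Now $\langle\rangle\in{\rm Tr}(T)$ rules out $T=(A',\sigma')$ with $A'\neq\emptyset$, and $z=\langle 0,y\rangle$ (not of the form $\langle 1,\cdot\rangle$) rules out both $T=(\emptyset,\emptyset)$ and $T=(A',\sigma')$ with $A'\neq\emptyset$ being r-defined by $z$. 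Hence $T={\sf NIL}$, and $\langle\rangle$ is a good leaf of $T$.

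For the inductive step, write $d=\langle d_1\rangle\ast d''$ with $d''$ of length $n-1$. Since $d\in{\rm Tr}(S)$ has positive length, $S=(A,\sigma)$ with $d_1\in A$ and $d''\in{\rm Tr}(\sigma(d_1))$; moreover $d$ being a good leaf of $S$ means $d''$ is a good leaf of $\sigma(d_1)$. Likewise $T=(A',\sigma')$ with $d_1\in A'$ and $d''\in{\rm Tr}(\sigma'(d_1))$. The key point is now that $z$ being r-defined on $S$ via some $\theta$ forces $z=\langle 1,\langle n,e\rangle\rangle$ for certain $n,e$, with $A\in\theta(n)$, $ed_1$ defined, and $\sigma(d_1)$ being $(ed_1,\theta,\mathbb{N})$-dedicated (hence $ed_1$ r-defined on $\sigma(d_1)$). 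The same $z$ being r-defined on $T$ via some $\theta'$ yields the \emph{same} $n$ and $e$ (by injectivity of pairing), with $\sigma'(d_1)$ being $(ed_1,\theta',\mathbb{N})$-dedicated, so $ed_1$ is r-defined on $\sigma'(d_1)$ as well. Applying the induction hypothesis to $\sigma(d_1)$, $\sigma'(d_1)$, $d''$, and $ed_1$ yields that $d''$ is a good leaf of $\sigma'(d_1)$, hence $d=\langle d_1\rangle\ast d''$ is a good leaf of $T$.

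There is no genuine obstacle; the only delicate point is the book-keeping in the base case to rule out degenerate $(\emptyset,\emptyset)$ sights, and the observation that although $S$ and $T$ may be r-defined with respect to different $\theta,\theta'$, the code $z$ itself transports the partial-recursive ``navigation data'' $e$ from one sight to the other, which is exactly what makes the induction go through.
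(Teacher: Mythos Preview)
Your proposal is correct and follows essentially the same approach as the paper's proof: induction on the length of $d$, using in the base case that $z=\langle 0,y\rangle$ forces $T={\sf NIL}$, and in the inductive step passing to the subsights $\sigma(d_1)$, $\sigma'(d_1)$ with the new witness $ed_1$. You spell out more details than the paper does (in particular the exclusion of the $(\emptyset,\emptyset)$ case and the explicit identification of $ed_1$ as the witness for the induction hypothesis), but the argument is the same.
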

\begin{proof} Induction on $n$. If $n=0$ then $d=\langle\rangle$, so if $d$ is a good leaf of $S$, $S={\sf NIL}$. Then $z$, being r-defined on $S$, must be $\langle 0,y\rangle$; hence, since $z$ is r-defined on $T$, $T={\sf NIL}$ and $d$ is a good leaf of $T$.

If $n>0$ then $S=(A,\sigma ), T=(B,\tau )$. Then $\langle d_2,\ldots ,d_n\rangle$ (which is $\langle\rangle$ if $n=1$) is a good leaf of $\sigma (d_1)$ and an element of ${\rm Tr}(\tau (d_1))$; by induction hypothesis $\langle d_2,\ldots ,d_n\rangle$ is a good leaf of $\tau (d_1)$ hence $d$ is a good leaf of $T$.\end{proof}
\begin{proposition}\label{nintcor2} Let ${\cal A},{\cal B}\in {\cal P}^*{\cal P}(\mathbb{N})$ and $n\geq 1$ be such that ${\cal B}$ has the $n$-intersection property whereas $\cal A$ contains sets $A_1,\ldots ,A_n$ satisfying $\bigcap_iA_i=\emptyset$. Then $G_{\cal A}\mbox{$\not\leq$}_LG_{\cal B}$.\end{proposition}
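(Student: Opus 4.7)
The plan is to argue by contradiction: assume $G_{\cal A}\leq _LG_{\cal B}$ and derive a common element of $A_1,\ldots ,A_n$. Represent $G_{\cal B}$ as $G_\theta$ with $\theta :\{0\}\to {\cal P}^*{\cal P}(\mathbb{N})$, $\theta (0)={\cal B}$; this fits the framework of Corollary~\ref{leqLchar}, which then yields a number $z$ and, for each $i\in\{1,\ldots ,n\}$, a $(z,\theta ,A_i)$-dedicated sight $S_i$. By an immediate induction on sights, if $S$ is $(z,\theta ,p)$-dedicated and $p\subseteq p'$ then $S$ is $(z,\theta ,p')$-dedicated, so each $S_i$ is $(z,\theta ,\mathbb{N})$-dedicated, meaning $z$ is $r$-defined on every $S_i$ in the sense of Definition~\ref{rdefdef}. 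Similarly, the definition of dedicatedness forces every non-good leaf's out-set to lie in $\theta (0)={\cal B}$, so each $S_i$ is a sight on $\cal B$.

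Next I would apply Corollary~\ref{nintcor}, which uses the $n$-intersection property of $\cal B$, to the family $S_1,\ldots ,S_n$: it produces a sequence $d\in\bigcap _i{\rm Tr}(S_i)$ that is a good leaf of at least one $S_j$. Now Proposition~\ref{rdefprop}, applied to the pair $(S_j,S_i)$ for each $i$ (both have $z$ as a common $r$-defined value, and $d$ lies in both trees), upgrades this to: $d$ is a good leaf of \emph{every} $S_i$.

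Finally, convert dedication to supporting form via Proposition~\ref{dedsup}(i) using the partial recursive $F$: each $S_i$ is $(F(z),\theta ,A_i)$-supporting. Since $d$ is a good leaf of each $S_i$, the defining clause of supporting sights gives $F(z)(d)\in\{0\}\wedge A_i$ for every $i$. But $F(z)(d)$ is a single number, so writing $F(z)(d)=\langle 0,y\rangle$, the element $y$ lies in $A_i$ for all $i=1,\ldots ,n$. This contradicts $\bigcap _i A_i=\emptyset$, and hence $G_{\cal A}\not\leq _LG_{\cal B}$.

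There is no real obstacle here; the work has been pre-packaged by the preceding lemmas. The one point that requires a moment of care is that the argument relies on the two separate pieces of information extracted from a single $(z,\theta ,A_i)$-dedicated sight: the structural fact (captured by r-definedness and Proposition~\ref{rdefprop}) that goodness of a leaf is uniform across sights on which the same $z$ is r-defined, and the semantic fact (captured by $F$ in Proposition~\ref{dedsup}) that at a good leaf one reads off a witness in $A_i$. Combining them is precisely what forces the intersection $\bigcap _i A_i$ to be inhabited.
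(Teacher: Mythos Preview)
Your proof is correct and follows essentially the same route as the paper's: obtain a common $z$ and dedicated sights $S_i$ via Corollary~\ref{leqLchar}, use the $n$-intersection property and Corollary~\ref{nintcor} to find a common $d$ that is a good leaf of one $S_i$, upgrade to all $S_i$ via Proposition~\ref{rdefprop}, and then read off a common element $y\in\bigcap_iA_i$ using $F$ from Proposition~\ref{dedsup}. The only cosmetic difference is that the paper phrases the conclusion as ``$\cal A$ would have the $n$-intersection property'' rather than fixing the specific $A_1,\ldots ,A_n$ in advance.
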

\begin{proof} Suppose $G_{\cal A}\leq _LG_{\cal B}$ and let $A_1,\ldots ,A_n\in {\cal A}$. By~\ref{leqLchar} there is a number $z$ and for each $i$ a $(z,{\cal B},A_i)$-dedicated sight $S_i$. Since $\cal B$ has the $n$-intersection property, by~\ref{nintcor} there is a coded sequence $d\in\bigcap_i{\rm Tr}(S_i)$ which is a good leaf of at least one $S_i$. Since $z$ is r-defined on each $S_i$, \ref{rdefprop} gives that $d$ is a good leaf of each $S_i$. By \ref{dedsup}, every $S_i$ is $(F(z),{\cal B},A_i)$-supporting, which means that $F(z)d=\langle 0,y\rangle$ with $y\in\bigcap_iA_i$. This holds for any $n$-tuple $A_1,\ldots ,A_n\in {\cal A}$, so we see that ${\cal A}$ has the $n$-intersection property.\end{proof}
\section{Calculations}
We are now ready to investigate some basic local operators.

Let $\alpha$ be a natural number $>1$, or $\omega$. With $\alpha$ we associate the set $\{ 1,\ldots,\alpha\}$ if $\alpha$ is a natural number, or $\mathbb{N}$ if $\alpha =\omega$. For $m\leq\alpha\leq\omega$ let
$$O^{\alpha}_m\; =\;\{ X\subseteq\alpha\, |\, |\alpha -X|=m\}$$
the set of `co-$m$-tons' in $\alpha$. Via the map $G_{(-)}$ of~\ref{Gembed} we regard the $O^{\alpha}_m$ as elements of $M^*$ (and we write $O^{\alpha}_m$ instead of $G_{O^{\alpha}_m}$). Of course, we are really interested in the local operators generated by the $O^{\alpha}_m$, and therefore we first get some trivial cases out of the way: if $\alpha =m$ so $O^{\alpha}_m=\{\emptyset\}$, then $L(O^{\alpha}_m)$ is the trivial local operator, and if $m<\alpha\leq 2m$ then $O^{\alpha}_m$ contains two disjoint finite sets whence $\neg\neg\leq _LO^{\alpha}_m$ by~\ref{disjointcor}.

Henceforth we concentrate on the case $1<2m<\alpha\leq\omega$.
\begin{proposition}\label{Om<Om+1} Let $1<2m<\alpha\leq\omega$. Then $O^{\alpha}_m<O^{\alpha}_{m+1}$ in $M^*$.\end{proposition}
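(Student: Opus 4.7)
The proof splits naturally into two directions, $O^\alpha_m \le O^\alpha_{m+1}$ and $O^\alpha_{m+1} \not\le O^\alpha_m$.

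For the first, each $X\in O^\alpha_m$ satisfies $|X|=\alpha-m\ge 1$; removing any one element of $X$ yields some $Y\in O^\alpha_{m+1}$ with $Y\subseteq X$. Hence $(X\to p)\subseteq(Y\to p)$ for every $p\subseteq\mathbb N$, giving $G_{O^\alpha_m}(p)\subseteq G_{O^\alpha_{m+1}}(p)$ uniformly in $p$, witnessed by the identity partial recursive function.

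For the second, I would in fact prove the stronger statement $G_{O^\alpha_{m+1}}\not\le_L G_{O^\alpha_m}$. Supposing the contrary, Corollary~\ref{leqLchar} (applied with $\theta(0)=O^\alpha_m$) yields a fixed $z$ such that every $Y\in O^\alpha_{m+1}$ admits a $(z,\theta,Y)$-dedicated sight $S_Y$. Note $z$ cannot be a leaf code $\langle 0,y\rangle$, since then we would need $y\in Y$ for every $Y$, but $\bigcap_{Y\in O^\alpha_{m+1}}Y=\emptyset$; so $z=\langle 1,\langle 0,e\rangle\rangle$ and the root is internal. The plan is to construct by induction on depth an infinite branch $\langle a_1,a_2,\dots\rangle$ lying in $\mathrm{Tr}(S_Y)$ for \emph{every} $Y$---contradicting well-foundedness of each $S_Y$.

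The key combinatorial step is: at an internal node $s$ lying in every $S_Y$, the set of \emph{leaf positions}
$$L_s=\{a\in\{1,\dots,\alpha\}:\text{the code at }s*\langle a\rangle\text{ has the form }\langle 0,y_a\rangle\}$$
satisfies $|L_s|\le m$. Indeed, $\mathrm{Out}_Y(s)\in O^\alpha_m$ excludes exactly $m$ positions, and for $(z,\theta,Y)$-dedication must exclude every ``bad'' $a\in L_s$ with $y_a\notin Y$; if the set $\{y_a:a\in L_s\}$ had size exceeding $m$, some $(m+1)$-subset $F$ of it would give $Y=\{1,\dots,\alpha\}\setminus F$ with more than $m$ bad leaves at $s$, a contradiction. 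Consequently $\mathrm{Out}_Y(s)$ contains at least $(\alpha-m)-|L_s|\ge \alpha-2m\ge 1$ positions whose code is internal; pick any such $a=a_{k+1}$, a choice that depends only on $z$ and not on $Y$. Since $a_{k+1}$ is not a bad leaf for any $Y$, it can be included in $\mathrm{Out}_Y(s)$ alongside the required bad-leaf exclusions (the inequality $\alpha\ge m+1$ leaves enough room). Starting at $\langle\rangle$ and iterating yields the desired infinite branch.

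The main obstacle is the leaf-counting lemma for $L_s$ together with the verification that the branch can be pushed through every $S_Y$ simultaneously; both rely crucially on the hypothesis $\alpha>2m$, which supplies the surplus $\alpha-2m\ge 1$ that forces internal descent to continue at every level.
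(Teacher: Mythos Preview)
Your first direction is fine and matches the paper. The second direction, however, has a genuine gap: you are attempting to prove the strictly stronger statement $O^\alpha_{m+1}\not\le_L O^\alpha_m$, but this statement is \emph{false} when $\alpha=\omega$ (Proposition~\ref{O1<LOm} gives $O^\omega_1\cong_L O^\omega_m$ for all $m\ge 1$, hence $O^\omega_{m+1}\le_L O^\omega_m$), and for finite $\alpha$ it is explicitly recorded as an open problem immediately after Proposition~\ref{m+1notLm} (for those $\alpha,m$ with $\lceil\alpha/(m+1)\rceil=\lceil\alpha/m\rceil$). So your argument cannot be correct as stated.

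Concretely, two steps fail. First, your ``leaf-counting lemma'' does not establish $|L_s|\le m$: the argument you give only bounds the number of distinct values $y_a$ lying in $\alpha$, not the number of positions $a$. Many positions can share a single $y$-value, so $|L_s|$ may be large (indeed infinite when $\alpha=\omega$). Second, and more seriously, the assertion that an internal child $a_{k+1}$ ``can be included in $\mathrm{Out}_Y(s)$'' for every $Y$ is unsupported: including $a_{k+1}$ in the out-set of a $(z,\theta,Y)$-dedicated sight requires the existence of an $(e_s a_{k+1},\theta,Y)$-dedicated subsight, and nothing in your hypotheses guarantees this for the particular $a_{k+1}$ you selected. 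The sights $S_Y$ are given, not at your disposal to rearrange; and even allowing re-choice, the needed subsight simply may not exist for some $Y$.

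The paper avoids all of this by proving only what is needed, namely $O^\alpha_{m+1}\not\le O^\alpha_m$ in the preorder $M^*$ (not in $\le_L$). That amounts to showing no single index $k$ can satisfy $\forall A\in O^\alpha_{m+1}\,\exists B\in O^\alpha_m\,(k:B\to A)$, which is a short pigeonhole argument on the range of $\varphi_k$ restricted to $\alpha$: if that range meets $\alpha$ in at most $m$ points, some $A\in O^\alpha_{m+1}$ misses it entirely; if it contains $m+1$ distinct points $v_1,\dots,v_{m+1}$, then for $A=\alpha\setminus\{v_1,\dots,v_{m+1}\}$ the $m$-element complement of any $B\in O^\alpha_m$ cannot cover all the preimages.
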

\begin{proof} For $\leq$ we need a $k$ such that for each $A\in O^{\alpha}_m$ there is $B\in O^{\alpha}_{m+1}$ with $k\in B\to A$; but we can take $\lambda x.x$ for $k$.

For $O^{\alpha}_{m+1}\mbox{$\not\leq$}O^{\alpha}_m$, suppose $k$ is such that for each $A\in O^{\alpha}_{m+1}$ there is $B\in O^{\alpha}_m$ with $k\in B\to A$. Let $\gamma $ be the restriction of the partial function $\varphi _k$ to $\alpha$ and let $C=\gamma [\alpha ]\cap\alpha$. If $|C|\leq m$ then since $2m+1\leq\alpha$ we can find an $A\in O^{\alpha}_{m+1}$ such that $C\cap A=\emptyset$, but then clearly there is no $B\in O^{\alpha}_m$ with $k\in B\to A$. So pick $m+1$ distinct elements $v_1,\ldots ,v_{m+1}\in C$. By choice of $k$ there is $B\in O^{\alpha}_m$ such that $k:B\to (\alpha -\{ v_1,\ldots ,v_{m+1}\} )$. Then we must have $\gamma [\alpha -B]=\{ v_1,\ldots ,v_{m+1}\}$ but this is impossible, since $|\gamma [\alpha -B]|\leq |\alpha -B|=m$.\end{proof}
\begin{proposition}\label{O1<LOm} Let $1\leq m<\omega$. Then $O^{\omega}_1\cong _LO^{\omega}_m$.\end{proposition}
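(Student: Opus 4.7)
The plan is to prove the two inequalities separately. The easy direction $O^{\omega}_1 \leq_L O^{\omega}_m$ follows by iterating Proposition~\ref{Om<Om+1}: one gets $O^{\omega}_1 \leq O^{\omega}_2 \leq \cdots \leq O^{\omega}_m$ in $M^*$, and $\leq$ implies $\leq_L$.

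For the substantial converse $O^{\omega}_m \leq_L O^{\omega}_1$, I will apply Corollary~\ref{leqLchar} with the indexing $\theta : \{0\} \to {\cal P}^*{\cal P}(\mathbb{N})$, $\theta(0) = O^{\omega}_1$, so that I need a single number $z$ such that for every $A \in O^{\omega}_m$ there is a $(z,\theta,A)$-dedicated sight. My design makes $z$ encode a uniform depth-$m$ branching procedure whose leaf along the path $(a_0,\ldots,a_{m-1})$ emits the tuple code $\langle a_0,\ldots,a_{m-1}\rangle_m$, for a fixed primitive recursive bijection $\langle\cdot\rangle_m : \mathbb{N}^m \to \mathbb{N}$. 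The recursion theorem supplies $z$ routinely: pick a total recursive $F$ such that $F(h)(a)$ equals $\langle 0, \langle h \ast \langle a\rangle\rangle_m\rangle$ when the length of $h$ is $m-1$, and $\langle 1, \langle 0, F(h \ast \langle a\rangle)\rangle\rangle$ otherwise; then set $z = \langle 1, \langle 0, F(\langle\,\rangle)\rangle\rangle$.

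All the $A$-dependence lives in the sight. At each internal node with history $(a_0,\ldots,a_{k-1})$ I will choose $c_k = c_k(a_0,\ldots,a_{k-1}) \in \mathbb{N}$ and use $A'_k = \mathbb{N} \setminus \{c_k\} \in O^{\omega}_1$. Writing $\mathbb{N} \setminus A = \{v_1,\ldots,v_m\}$ and $\alpha^{(i)} = \langle\cdot\rangle_m^{-1}(v_i) \in \mathbb{N}^m$, the leaf value $\langle a_0,\ldots,a_{m-1}\rangle_m$ lies in $A$ exactly when $(a_0,\ldots,a_{m-1}) \neq \alpha^{(i)}$ for every $i$. So I need to pick the $c_k$'s to \emph{cut} each $\alpha^{(i)}$, i.e.\ so that $c_k(\alpha^{(i)}_0,\ldots,\alpha^{(i)}_{k-1}) = \alpha^{(i)}_k$ for some $k$. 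This reduces to a combinatorial cutting lemma, which I will prove by induction on $n$: any set of at most $n$ points in $\mathbb{N}^n$ admits a depth-$n$ adaptive cutting strategy. The base $n=0$ is vacuous; for $n \geq 1$, pick any one surviving point $p$ and set $c_0 = p_0$, cutting $p$; at most $n-1$ points then remain in any row $a_0 \neq c_0$ and, projected to $\mathbb{N}^{n-1}$, are handled by the inductive hypothesis.

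The main conceptual obstacle is noticing that $z$ is fixed uniformly in $A$ while the $A'_k$ inside the sight are free to depend on $A$; encoding paths into leaf values transfers the whole problem of hitting $A$ onto the $A$-dependent side, reducing it to the cutting lemma. Once that is understood, both the recursion-theorem construction of $z$ and the inductive cutting argument are routine, and the final verification that the constructed $S_A$ is $(z,\theta,A)$-dedicated is a straightforward induction on depth.
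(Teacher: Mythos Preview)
Your proposal is correct and shares the paper's overall architecture: a uniform depth-$m$ branching index $z$ whose leaf along the path $(a_0,\ldots ,a_{m-1})$ emits the tuple code, with all $A$-dependence placed in the choice of the $O^{\omega}_1$-sets at the nodes of the sight. (The paper phrases this with supporting rather than dedicated sights, but by Proposition~\ref{dedsup} that is cosmetic.)

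The difference lies in the combinatorial step. The paper dispatches it by a one-line diagonalization: fixing an enumeration $a_1,\ldots ,a_m$ of $\mathbb{N}\setminus A$, it makes the \emph{non-adaptive} choice $c_k=(a_k)_k$ (the $k$-th projection of the $k$-th excluded number), so that $\langle c_1,\ldots ,c_m\rangle =a_j$ would force $c_j=(a_j)_j$, contradicting the tree condition. You instead invert a bijection $\mathbb{N}^m\to\mathbb{N}$ to obtain $m$ target points in $\mathbb{N}^m$ and prove an adaptive cutting lemma by induction on $m$. Both arguments are sound; the paper's diagonal trick is shorter and needs neither the bijection nor the induction, while your cutting lemma is a slightly more general combinatorial statement (it would handle any $m$ forbidden points in $\mathbb{N}^m$, not only those arising from a tupling).
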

\begin{proof} We have $O^{\omega}_1\leq O^{\omega}_m$ in $M^*$ hence $O^{\omega}_1\leq _LO^{\omega}_m$; this is left to the reader.

For the converse inequality $O^{\omega}_m\leq _LO^{\omega}_1$ we have to find (by~\ref{leqLchar} and \ref{dedsup}) a number $z$ and, for each $A\in O^{\omega}_m$, a $(z,O^{\omega}_1,A)$-supporting sight. In order to conform to definition~\ref{supportingdef} we regard $O^{\omega}_1$ as function $\{ 0\}\to {\cal P}^*{\cal P}(\mathbb{N})$ with value $O^{\omega}_1$.

Given distinct $a_1,\ldots ,a_m\in\mathbb{N}$ define
$$T_{a_1,\ldots ,a_m}\; =\; \{\langle c_1,\ldots ,c_p\rangle\, |\, p\leq m\text{ and for all }i\leq p,\, c_i\neq (a_i)_i\}$$
and let $S_{a_1,\ldots ,a_m}$ be the unique non-degenerate sight with ${\rm Tr}(S_{a_1,\ldots ,a_m})=T_{a_1,\ldots ,a_m}$.

Let $z$ be such that for each coded sequence $\langle c_1,\ldots ,c_p\rangle$,
$$z\langle c_1,\ldots ,c_p\rangle\; =\;\left\{\begin{array}{cl}\langle 1,0\rangle & \text{if }p<m \\
\langle 0,\langle c_1,\ldots ,c_m\rangle\rangle & \text{if }p\geq m\end{array}\right.$$
We claim that $S_{a_1,\ldots ,a_m}$ is $(z,O^{\omega}_1,\mathbb{N}-\{ a_1,\ldots ,a_m\} )$-supporting.

Note that for each $\langle c_1,\ldots ,c_p\rangle\in {\rm Tr}(T_{a_1,\ldots ,a_m})$ which is not a leaf, we have
$${\rm Out}(\langle c_1,\ldots ,c_p\rangle )\; =\; \{ c_{p+1}\, |\, c_{p+1}\neq (a_{p+1})_{p+1}\}$$
and this is an element of $O^{\omega}_1$. In this case, $z\langle c_1,\ldots ,c_p\rangle =\langle 1,0\rangle$ as required. If  $\langle c_1,\ldots ,c_p\rangle\in {\rm Tr}(T_{a_1,\ldots ,a_m})$ is a leaf, then $p=m$, so
$$z\langle c_1,\ldots ,c_p\rangle\; =\; \langle 0,\langle c_1,\ldots ,c_m\rangle\rangle$$
We need to see that $\langle c_1,\ldots ,c_p\rangle$ is not an element of $\{ a_1,\ldots ,a_m\}$; but this follows readily from the definition of $T_{a_1,\ldots ,a_m}$.\end{proof}
\begin{proposition}\label{ceiling} Let $1\leq m<\alpha <\omega$. Then $\ulcorner\frac{\alpha}{m}\urcorner$, the least integer $\geq\frac{\alpha}{m}$, is the least number $d$ for which there are $d$ elements $A_1,\ldots ,A_d$ of $O^{\alpha}_m$ with $\bigcap_{i=1}^dA_i=\emptyset$.\end{proposition}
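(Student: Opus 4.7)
\medskip

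\noindent\textbf{Proof plan for Proposition~\ref{ceiling}.} The proposition is a purely combinatorial fact about the $m$-element complements of the sets $A_i \in O^\alpha_m$. The plan is to reduce the condition $\bigcap_{i=1}^d A_i = \emptyset$ to a covering condition on the complements, and then appeal to the obvious arithmetic of covering a finite set by sets of fixed size.

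First, I would observe that if we write $C_i = \alpha - A_i$, then each $C_i$ is an $m$-element subset of $\alpha$, and
$$\bigcap_{i=1}^d A_i = \emptyset \;\Longleftrightarrow\; \bigcup_{i=1}^d C_i = \alpha.$$
Thus the statement becomes: the minimum number of $m$-element subsets of $\alpha$ needed to cover $\alpha$ is $\lceil \alpha/m\rceil$.

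Next, for the lower bound, suppose $d'$ sets $C_1,\ldots,C_{d'}$ of size $m$ cover $\alpha$. Then $\alpha \leq \sum_{i=1}^{d'} |C_i| = d'm$, so $d' \geq \alpha/m$, and since $d'$ is an integer, $d' \geq \lceil\alpha/m\rceil$.

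For the upper bound, set $d = \lceil\alpha/m\rceil$ and exhibit a covering explicitly. Write $\alpha = (d-1)m + r$ with $1 \leq r \leq m$. Take $C_i = \{(i-1)m+1, (i-1)m+2,\ldots,im\}$ for $i = 1,\ldots,d-1$, and let $C_d$ be any $m$-element subset of $\alpha$ containing the remaining $r$ elements $\{(d-1)m+1,\ldots,(d-1)m+r\}$ (which is possible since $m \leq \alpha$, the case $r = m$ being the trivial partition case). Then each $C_i$ has size exactly $m$ and $\bigcup_i C_i = \alpha$, so the corresponding $A_i = \alpha - C_i$ lie in $O^\alpha_m$ and have empty intersection.

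There is no real obstacle here: the argument is a one-line counting inequality for the lower bound and an explicit near-partition for the upper bound. The only minor care needed is distinguishing the cases $m \mid \alpha$ and $m \nmid \alpha$ when writing down the covering family, which is handled uniformly by allowing the last block $C_d$ to overlap the previous ones.
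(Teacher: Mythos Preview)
Your argument is correct and follows essentially the same route as the paper: pass to complements $C_i=\alpha - A_i$ to turn the empty-intersection condition into a covering condition, and then observe that $d$ sets of size $m$ can cover $\alpha$ iff $dm\geq\alpha$. The paper compresses both the lower and upper bound into the single equivalence ``$\forall A_1,\ldots,A_d\in O^{\alpha}_m\,(\bigcap A_i\neq\emptyset)\Leftrightarrow dm<\alpha$'' without writing out the explicit block covering, but the content is the same.
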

\begin{proof} For any $d\geq 1$ we have: $\forall A_1,\ldots ,A_d\in O^{\alpha}_m (\bigcap_{i=1}^dA_i\neq\emptyset )$ if and only if $\forall A_1,\ldots ,A_d\in O^{\alpha}_{\alpha -m} (\bigcup_{i=1}^dA_i\neq\alpha )$ if and only if $dm<\alpha$.\end{proof}
\begin{proposition}\label{m+1notLm} Let $1<2m<\alpha <\omega$. Suppose $\ulcorner\frac{\alpha}{m+1}\urcorner <\ulcorner\frac{\alpha}{m}\urcorner$. Then $O^{\alpha}_{m+1}\mbox{$\not\leq$}_LO^{\alpha}_m$, so $O^{\alpha}_m<_LO^{\alpha}_{m+1}$.\end{proposition}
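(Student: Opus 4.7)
The plan is to reduce this to a direct application of Proposition~\ref{nintcor2}, using Proposition~\ref{ceiling} to supply the combinatorial input. Taking ${\cal A}=O^{\alpha}_{m+1}$ and ${\cal B}=O^{\alpha}_m$, I need to exhibit an integer $n\geq 1$ such that (a) $O^{\alpha}_m$ has the $n$-intersection property and (b) $O^{\alpha}_{m+1}$ contains $n$ sets with empty intersection. By \ref{ceiling}, the least $d$ for which $k$ sets in $O^{\alpha}_k$ have empty intersection is exactly $\lceil\alpha/k\rceil$, so (a) amounts to $n<\lceil\alpha/m\rceil$ and (b) amounts to $n\geq\lceil\alpha/(m+1)\rceil$.

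Concretely, I would set $n=\lceil\alpha/(m+1)\rceil$. By \ref{ceiling} applied to $m+1$, there exist $A_1,\ldots,A_n\in O^{\alpha}_{m+1}$ with $\bigcap_{i=1}^n A_i=\emptyset$, giving (b). The hypothesis $\lceil\alpha/(m+1)\rceil<\lceil\alpha/m\rceil$ is precisely $n<\lceil\alpha/m\rceil$, and another appeal to \ref{ceiling} (this time in its contrapositive form: fewer than $\lceil\alpha/m\rceil$ sets in $O^{\alpha}_m$ must always have nonempty intersection) yields (a). Then Proposition~\ref{nintcor2} delivers $G_{O^{\alpha}_{m+1}}\not\leq_L G_{O^{\alpha}_m}$, which in our abbreviated notation is $O^{\alpha}_{m+1}\not\leq_L O^{\alpha}_m$.

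For the strict inequality $O^{\alpha}_m<_L O^{\alpha}_{m+1}$, note that Proposition~\ref{Om<Om+1} already gives $O^{\alpha}_m\leq O^{\alpha}_{m+1}$ in $M^*$, which trivially implies $O^{\alpha}_m\leq_L O^{\alpha}_{m+1}$; combined with the non-inequality just established this yields strictness.

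There is no real obstacle here: the lemma is essentially bookkeeping on top of \ref{ceiling} and \ref{nintcor2}. The only point requiring a moment's care is matching the quantifiers in \ref{ceiling} correctly (the statement is about the \emph{least} $d$), so one should be explicit that $n<\lceil\alpha/m\rceil$ implies no $n$-tuple from $O^{\alpha}_m$ can have empty intersection, not merely that some specific $n$-tuple does not.
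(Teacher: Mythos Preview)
Your proof is correct and follows essentially the same route as the paper's: set $d=\lceil\alpha/(m+1)\rceil$, use Proposition~\ref{ceiling} to see that $O^{\alpha}_{m+1}$ contains $d$ sets with empty intersection while $O^{\alpha}_m$ has the $d$-intersection property, and then apply Proposition~\ref{nintcor2}. Your explicit appeal to Proposition~\ref{Om<Om+1} for the strict inequality is exactly what the paper leaves implicit in the phrase ``so $O^{\alpha}_m<_LO^{\alpha}_{m+1}$''.
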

\begin{proof} Let $d=\ulcorner\frac{\alpha}{m+1}\urcorner$. Then $O^{\alpha}_{m+1}$ contains $d$ sets with empty intersection, whereas $O^{\alpha}_m$ has the $d$-intersection property. The result follows from proposition~\ref{nintcor2}.\end{proof}
\medskip

\noindent {\bf Open Problem}. We have not been able to determine whether it can happen that $O^{\alpha}_{m+1}\leq _LO^{\alpha}_m$ in the case that $\ulcorner\frac{\alpha}{m+1}\urcorner = \ulcorner\frac{\alpha}{m}\urcorner$.
\medskip

\noindent The following proposition shows that, in the preorder of basic local operators (i.e., the preorder $({\cal P}^*{\cal P}(\mathbb{N}),\leq _L)$), $O^{\omega}_1$ is an atom and $\neg\neg$ is a co-atom:
\begin{proposition}\label{omegaatom}~~~~~~~~~~~~~~~~~~~~~~\begin{rlist}
\item ${\sf id}<_LO^{\omega}_1$
\item For every ${\cal A}\in {\cal P}^*{\cal P}(\mathbb{N})$, either ${\cal A}\cong _L{\sf id}$, or ${\cal A}\cong _L\top$ (the trivial local operator), or $O^{\omega}_1\leq _L{\cal A}\leq _L\neg\neg$\end{rlist}\end{proposition}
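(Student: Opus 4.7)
The plan is to dispose of (i) immediately via Proposition~\ref{trivnotnot} iii) and to prove (ii) by an exhaustive three-way case analysis on ${\cal A}$. For (i), note that $\bigcap O^{\omega}_1=\bigcap_{n}(\mathbb{N}-\{n\})=\emptyset$, so Proposition~\ref{trivnotnot} iii) yields ${\sf id}<L(G_{O^{\omega}_1})$, which is exactly ${\sf id}<_L O^{\omega}_1$; the $\leq$ direction needs nothing since ${\sf id}$ is the least local operator.

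For (ii), I split on ${\cal A}$ as follows. If $\emptyset\in{\cal A}$, then $G_{\cal A}(\emptyset)\supseteq(\emptyset\to\emptyset)=\mathbb{N}$ is nonempty and Proposition~\ref{trivnotnot} i) gives ${\cal A}\cong_L\top$. If $\emptyset\notin{\cal A}$ but $\bigcap{\cal A}\neq\emptyset$, then Proposition~\ref{trivnotnot} iii) says ${\sf id}\not<L(G_{\cal A})$, and combined with ${\sf id}\leq L(G_{\cal A})$ this gives ${\cal A}\cong_L{\sf id}$. In the remaining case, $\emptyset\notin{\cal A}$ and $\bigcap{\cal A}=\emptyset$. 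The bound ${\cal A}\leq_L\neg\neg$ is immediate here: Proposition~\ref{trivnotnot} i) tells us $L(G_{\cal A})$ is nontrivial, and Section~3 recalled that $\neg\neg$ is the largest nontrivial local operator.

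The substantive step is $O^{\omega}_1\leq_L{\cal A}$, which I would verify via Corollary~\ref{leqLchar}. Coding ${\cal A}$ as a function $\theta:\{0\}\to{\cal P}^*{\cal P}(\mathbb{N})$ with $\theta(0)={\cal A}$, it suffices to produce a single index $z$ such that every $A=\mathbb{N}-\{n\}\in O^{\omega}_1$ admits a $(z,\theta,A)$-dedicated sight. The hypothesis $\bigcap{\cal A}=\emptyset$ supplies, for each $n\in\mathbb{N}$, some $X_n\in{\cal A}$ with $n\notin X_n$, hence $X_n\subseteq A$; note also $X_n\neq\emptyset$ since $\emptyset\notin{\cal A}$. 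Taking $z=\langle 1,\langle 0,e\rangle\rangle$ with $e=\lambda x.\langle 0,x\rangle$, the one-level sight $S_A=(X_n,\sigma)$ with $\sigma(x)={\sf NIL}$ is $(z,\theta,A)$-dedicated by direct inspection of Definition~\ref{dedicateddef}: $X_n\in\theta(0)$, and for each $x\in X_n$ we have $ex=\langle 0,x\rangle\in\{0\}\wedge A$, witnessing that ${\sf NIL}$ is $(ex,\theta,A)$-dedicated. The only possible subtlety --- that the selection $n\mapsto X_n$ need not be recursive --- is harmless, since the index $z$ is a single fixed number and Corollary~\ref{leqLchar} permits the sight itself to depend on $A$.
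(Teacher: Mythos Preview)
Your argument is correct and follows the same overall plan as the paper: invoke Proposition~\ref{trivnotnot}iii) for part~(i), and for part~(ii) split into the three cases $\emptyset\in{\cal A}$, $\bigcap{\cal A}\neq\emptyset$, and the remaining case, handling the first two via Proposition~\ref{trivnotnot}.

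The only noteworthy difference is in how you establish $O^{\omega}_1\leq_L{\cal A}$ when $\bigcap{\cal A}=\emptyset$. The paper does not invoke sights at all here: it simply observes that $\lambda x.x\in[O^{\omega}_1\leq{\cal A}]$ in the basic preorder on ${\cal P}^*{\cal P}(\mathbb{N})$, since for each $\mathbb{N}-\{n\}\in O^{\omega}_1$ there is some $B\in{\cal A}$ with $n\notin B$, i.e.\ $B\subseteq\mathbb{N}-\{n\}$, and the identity maps $B$ into $\mathbb{N}-\{n\}$. From $O^{\omega}_1\leq{\cal A}$ one gets $O^{\omega}_1\leq_L{\cal A}$ immediately. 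Your sight-based argument via Corollary~\ref{leqLchar} is exactly this same observation unwrapped through the dedicated-sight formalism: your one-level sight $(X_n,x\mapsto{\sf NIL})$ with $e=\lambda x.\langle 0,x\rangle$ is precisely what the map $\lambda x.x:X_n\to(\mathbb{N}-\{n\})$ becomes after passing through $G_{(-)}$ and $L'$. So there is no genuine difference in content, only in packaging; the paper's route is a bit shorter since it stays at the level of the preorder $\leq$ and avoids the sight machinery entirely. Similarly, for ${\cal A}\leq_L\neg\neg$ the paper argues directly that ${\cal A}\leq\{p\subseteq\mathbb{N}\,|\,p\neq\emptyset\}$ in the basic preorder, rather than appealing to maximality of $\neg\neg$ among nontrivial local operators; again both are correct and equivalent.
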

\begin{proof} Part i) follows directly from \ref{trivnotnot}iii).

For ii): again using \ref{trivnotnot}iii), ${\cal A}\cong _L{\sf id}$ if and only if $\bigcap {\cal A}\neq\emptyset$. If $\bigcap {\cal A}=\emptyset$ then for each $n\in\mathbb{N}$ there is an $A\in {\cal A}$ with $n\mbox{$\not\in$}A$, hence $\lambda x.x\in [O^{\omega}_1\leq {\cal A}]$.

From the same proposition, part i), it follows that ${\cal A}\cong _L\top$ if and only if $\emptyset\in {\cal A}$. If $\emptyset\mbox{$\not\in$}{\cal A}$ then ${\cal A}\leq \{ p\subseteq\mathbb{N}\, |\, p\neq\emptyset\}$, so ${\cal A}\leq _L\neg\neg$.\end{proof}
\medskip

\noindent {\bf Remark}. Note that we do {\em not\/} have in $M^*$ that if ${\sf id}<f$ then $O^{\omega}_1\leq f$, as \ref{Hylandnotbasic} showed.
\begin{proposition}\label{alpha<beta} Let $1<2m<\beta\leq\alpha\leq\omega$. Then $O^{\alpha}_m\leq O^{\beta}_m$ in $M^*$.\end{proposition}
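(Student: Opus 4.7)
The plan is to invoke Proposition~\ref{Gembed}, which says that $G_{(-)}$ is a preorder embedding from $\nabla(\mathcal{P}^*\mathcal{P}(\mathbb{N}))$ into $M^*$. So it suffices to exhibit a single partial recursive index $k$ witnessing $O^\alpha_m \leq O^\beta_m$ in $\nabla(\mathcal{P}^*\mathcal{P}(\mathbb{N}))$; that is, for every $A \in O^\alpha_m$ there should exist $B \in O^\beta_m$ such that $k: B \to A$.

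I would take $k$ to be an index for the identity $\lambda x.x$. Then the only task is to produce, from each $A \in O^\alpha_m$, some $B \in O^\beta_m$ with $B \subseteq A$. Identify $\{1,\ldots,\omega\}$ with $\mathbb{N}$ when needed, and set $A' = A \cap \{1,\ldots,\beta\}$. Since $\{1,\ldots,\beta\} - A' \subseteq \{1,\ldots,\alpha\} - A$ and the right-hand side has cardinality $m$, the number $r := |\{1,\ldots,\beta\} - A'|$ satisfies $r \leq m$.

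If $r = m$, take $B = A'$ and we are done. If $r < m$, the hypothesis $\beta > 2m$ (when $\beta < \omega$; the case $\beta = \omega$ forces $r = m$ since $A'$ is then cofinite in $\mathbb{N}$ with complement exactly $\mathbb{N} - A$) gives $|A'| = \beta - r \geq \beta - m > m$, so $A'$ has more than $m$ elements and in particular more than enough room to delete a further $m - r$ elements. Removing any such elements yields a $B \subseteq A'$ with $|\{1,\ldots,\beta\} - B| = m$, i.e., $B \in O^\beta_m$, and $B \subseteq A' \subseteq A$ as required.

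There is no real obstacle here: the only thing to verify is the simple counting inequality $|A'| \geq \beta - m$, which the assumption $2m < \beta$ easily supplies. The proof is essentially a one-line observation that cofinite complements in a smaller universe are cofinite complements in a larger one (after intersection, with room to shrink if necessary), implemented by the identity map.
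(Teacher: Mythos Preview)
Your proof is correct and takes the same approach as the paper: both use the identity $\lambda x.x$ as the realizer for the inequality in $(\nabla(\mathcal{P}^*\mathcal{P}(\mathbb{N})),\leq)$. The paper's proof is the single line ``Realized by $\lambda x.x$,'' leaving implicit the elementary counting argument (that every $A\in O^\alpha_m$ contains some $B\in O^\beta_m$) which you have written out in full.
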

\begin{proof} Realized by $\lambda x.x$.\end{proof}
\begin{proposition}\label{alphanotleqomega} Let $1<2m<\alpha <\omega$. Then $O^{\alpha}_m\mbox{$\not\leq$}_LO^{\omega}_1$, hence $O^{\omega}_1<_LO^{\alpha}_m$.\end{proposition}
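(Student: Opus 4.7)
The plan is to derive the non-inequality $O^{\alpha}_m \not\leq_L O^{\omega}_1$ as a direct application of Proposition~\ref{nintcor2}, and to obtain the strict inequality $O^{\omega}_1 <_L O^{\alpha}_m$ by combining it with Proposition~\ref{omegaatom}(ii).

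For the application of \ref{nintcor2} I take $\mathcal{A} = O^{\alpha}_m$ and $\mathcal{B} = O^{\omega}_1$. The hypothesis splits into two easy verifications. The $n$-intersection property holds for $O^{\omega}_1$ for \emph{every} $n \geq 1$, since an intersection of finitely many cofinite subsets of $\mathbb{N}$, each missing a single element, still omits only finitely many points and is therefore nonempty. On the other hand, since $\alpha$ is finite the family $O^{\alpha}_m$ is itself finite and has empty total intersection (as $1 \leq m \leq \alpha - 1$, every point of $\{1, \ldots, \alpha\}$ is excluded from some member of $O^{\alpha}_m$); Proposition~\ref{ceiling} even identifies the optimal $n = \lceil \alpha/m \rceil$, but any large enough finite $n$ would do. Proposition~\ref{nintcor2} then yields $O^{\alpha}_m \not\leq_L O^{\omega}_1$.

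For the companion inequality $O^{\omega}_1 \leq_L O^{\alpha}_m$, I apply Proposition~\ref{omegaatom}(ii). Since $\bigcap O^{\alpha}_m = \emptyset$, \ref{trivnotnot}(iii) excludes $O^{\alpha}_m \cong_L {\sf id}$; and since every element of $O^{\alpha}_m$ has cardinality $\alpha - m \geq 1$, we have $\emptyset \notin O^{\alpha}_m$, so \ref{trivnotnot}(i) excludes equivalence with the trivial operator. Hence \ref{omegaatom}(ii) gives $O^{\omega}_1 \leq_L O^{\alpha}_m$, and together with the previous paragraph this yields $O^{\omega}_1 <_L O^{\alpha}_m$. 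The only conceptual point worth flagging — and the only real obstacle — is the observation that $O^{\omega}_1$ satisfies the $n$-intersection property for all finite $n$ simultaneously, which is precisely what makes \ref{nintcor2} applicable even though $O^{\alpha}_m$ only contributes finite subfamilies with empty intersection.
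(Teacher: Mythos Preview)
Your proof is correct and follows exactly the paper's approach: the paper's proof reads ``Immediate from \ref{nintcor2} and \ref{omegaatom}'', and you have simply spelled out the two verifications those references require (that $O^{\omega}_1$ has the $n$-intersection property for every finite $n$, and that $O^{\alpha}_m$ is neither $\cong_L{\sf id}$ nor $\cong_L\top$). Nothing is missing and nothing is different.
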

\begin{proof} Immediate from \ref{nintcor2} and \ref{omegaatom}.\end{proof}
\begin{proposition}\label{alpha+m<alpha} Let $1<2m,\alpha <\omega$. Then $O^{\alpha}_m\mbox{$\not\leq$}_LO^{\alpha +m}_m$, hence $O^{\alpha +m}_m<_LO^{\alpha}_m$.\end{proposition}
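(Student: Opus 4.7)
The plan is to apply Proposition~\ref{nintcor2} and reduce the desired non-inequality to a comparison of $n$-intersection properties. I need an $n\geq 1$ such that $O^{\alpha+m}_m$ enjoys the $n$-intersection property while $O^{\alpha}_m$ contains $n$ sets with empty intersection. The right choice, dictated by Proposition~\ref{ceiling}, is $n=\ulcorner\alpha/m\urcorner$: by that proposition this is precisely the smallest number of members of $O^{\alpha}_m$ whose intersection can be made empty, so $O^{\alpha}_m$ automatically supplies $n$ such sets.

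For the other hypothesis I use the equivalence extracted in the proof of Proposition~\ref{ceiling}, namely that $O^{\beta}_m$ has the $d$-intersection property iff $dm<\beta$. Applied to $\beta=\alpha+m$ and $d=n=\ulcorner\alpha/m\urcorner$, this reads $nm<\alpha+m$, i.e.\ $(n-1)m<\alpha$, which is immediate from the definition of the ceiling. Hence $O^{\alpha+m}_m$ has the $n$-intersection property, and Proposition~\ref{nintcor2} yields $O^{\alpha}_m\not\leq_L O^{\alpha+m}_m$.

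To upgrade this to the stated strict inequality $O^{\alpha+m}_m<_L O^{\alpha}_m$, I combine it with the reverse inequality $O^{\alpha+m}_m\leq_L O^{\alpha}_m$, which is an immediate instance of Proposition~\ref{alpha<beta} applied with its $\beta$ set to our $\alpha$ and its $\alpha$ set to our $\alpha+m$, after which $\leq$ in $M^*$ descends to $\leq_L$. I do not foresee any genuine obstacle: all the real work has already been absorbed into Propositions~\ref{nintcor2} and~\ref{ceiling}, and the only point of substance is the arithmetical observation that replacing $\alpha$ by $\alpha+m$ raises $\ulcorner\alpha/m\urcorner$ by exactly one, opening precisely the gap between the respective $n$-intersection properties needed to separate the two basic operators.
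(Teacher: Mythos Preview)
Your argument is correct and essentially identical to the paper's: both set $d=\ulcorner\alpha/m\urcorner$, use Proposition~\ref{ceiling} to get $d$ sets in $O^{\alpha}_m$ with empty intersection, observe that $O^{\alpha+m}_m$ has the $d$-intersection property (the paper phrases this as $\ulcorner(\alpha+m)/m\urcorner=d+1$, which is equivalent to your $(d-1)m<\alpha$), apply Proposition~\ref{nintcor2}, and finish with Proposition~\ref{alpha<beta}.
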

\begin{proof} Let $d=\ulcorner\frac{\alpha}{m}\urcorner$. Then $O^{\alpha}_m$ contains $d$ sets with empty intersection whereas $O^{\alpha +m}_m$ has the $d$-intersection property ($\ulcorner\frac{\alpha +m}{m}\urcorner =d+1$), so the first statement follows from \ref{nintcor2}. The second statement follows from \ref{alpha<beta}.\end{proof}
\medskip

\noindent {\bf Open Problems} 1. We do not know whether $O^{\alpha +1}_m<_LO^{\alpha}_m$.\\
2. How do, e.g., $O^{2m+1}_m$ and $O^{2n+1}_n$ compare?
\medskip

\noindent The following theorem shows that the local operators $O^{\alpha}_m$ do not create any new functions $N\to N$. Equivalently, they do not force any subobjects of $N$ to be decidable.
\begin{theorem}\label{alphanoteffective} Let $D\subseteq\mathbb{N}$ and $1<2m<\alpha\leq\omega$. Let $\chi _D$ be the characteristic function of $D$ and let $\rho _D(n)=\{\{\chi _D(n)\}\}$ (so $L(\rho _D)$ is the least local operator forcing $D$ to be decidable). We have: if $\rho _D\leq _LO^{\alpha}_m$ then $D$ is recursive.\end{theorem}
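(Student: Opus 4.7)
\medskip

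\noindent The plan is to extract from the hypothesis a total recursive algorithm computing $\chi_D$. By Corollary~\ref{leqLchar2}, viewing $O^\alpha_m$ as a function $\{0\}\to {\cal P}^*{\cal P}(\mathbb{N})$ with constant value $O^\alpha_m$, I obtain a total partial recursive $f$ such that for each $n$ there is an $(f(n),O^\alpha_m,\{\chi_D(n)\})$-dedicated sight $S_n$. Setting $g_n:=F(f(n))$ with $F$ from Proposition~\ref{dedsup}, each $S_n$ is $(g_n,O^\alpha_m,\{\chi_D(n)\})$-supporting, and $g_n$ is partial recursive uniformly in $n$. Since $m<\alpha$ forces $\emptyset\notin O^\alpha_m$, Proposition~\ref{degenerate->empty} implies that each $S_n$ is non-degenerate; the supporting condition then reads: for every $s\in {\rm Tr}(S_n)$, either $s$ is a good leaf with $g_n(s)=\langle 0,\chi_D(n)\rangle$, or $g_n(s)=\langle 1,0\rangle$ and ${\rm Out}_{S_n}(s)\in O^\alpha_m$.

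\medskip

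\noindent Using Kleene's recursion theorem I define a partial recursive function $\Phi(n,s)$ as follows: compute $g_n(s)$; if $g_n(s)=\langle 0,y\rangle$, return $y$; if $g_n(s)=\langle 1,0\rangle$, dovetail the computations of $\Phi(n,s*\langle a\rangle)$ for $a=1,\ldots,2m+1$ (valid child indices, since $2m+1\leq\alpha$), and return $y$ as soon as $m+1$ of these subcomputations converge to the common value $y$.

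\medskip

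\noindent By induction on the well-founded rank of $s$ in $S_n$, I claim that $\Phi(n,s)$ converges to $\chi_D(n)$ whenever $s\in {\rm Tr}(S_n)$. The base case (rank $0$) is immediate. For higher rank, ${\rm Out}_{S_n}(s)\in O^\alpha_m$ omits at most $m$ elements of $\{1,\ldots,\alpha\}$, so at least $m+1$ of the indices $a\in\{1,\ldots,2m+1\}$ lie in ${\rm Out}_{S_n}(s)$; for each such ``good'' $a$, $s*\langle a\rangle\in {\rm Tr}(S_n)$ has strictly smaller rank, hence by the induction hypothesis $\Phi(n,s*\langle a\rangle)$ converges to $\chi_D(n)$, supplying the required $(m+1)$-fold agreement. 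Conversely, any $(m+1)$-fold agreement on a value $y$ must include at least one good sibling (since at most $m$ of the $2m+1$ siblings are ``bad''), forcing $y=\chi_D(n)$. Because $\langle\rangle\in{\rm Tr}(S_n)$ always, the map $n\mapsto\Phi(n,\langle\rangle)$ is a total recursive function equal to $\chi_D$, so $D$ is recursive.

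\medskip

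\noindent The main subtlety---and the step that genuinely requires the co-$m$-ton structure of $O^\alpha_m$---is that on sequences outside ${\rm Tr}(S_n)$ the function $g_n$ is uncontrolled, so bad subcomputations may diverge or return misleading values. The $(m+1)$-out-of-$(2m+1)$ voting threshold is calibrated exactly to tolerate up to $m$ such bad siblings, matching the sparseness of missing elements prescribed by $O^\alpha_m$; it is here that the hypothesis $2m<\alpha$ (hence $2m+1\leq\alpha$) is used in an essential way.
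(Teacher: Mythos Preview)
Your argument is correct and follows essentially the same strategy as the paper: define by the recursion theorem a ``majority-vote'' procedure that, at each non-leaf node, waits for $m+1$ child computations to agree and returns the common value; correctness follows by induction on the sight, since among the children at most $m$ are outside $A\in O^\alpha_m$ and hence any $(m+1)$-fold agreement must include a child in $A$. The only noteworthy differences are cosmetic: you first pass to the \emph{supporting} encoding via $F$ and recurse on sequences $s$, whereas the paper works directly in the \emph{dedicated} encoding and recurses on the number $z=\langle 1,\langle n,e\rangle\rangle$; and you restrict the child search to the fixed window $\{1,\ldots,2m+1\}$ rather than dovetailing over all of $\alpha$, which is a harmless sharpening (any $(m+1)$-subset of $\{1,\ldots,2m+1\}$ still meets $A$, and at least $m+1$ of these indices lie in $A$).
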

\begin{proof} Note that $\rho _D\leq _LO^{\alpha}_m$ if and only if there is a total recursive function $\zeta$ such that for all $n$ there is a $(\zeta (n),O^{\alpha}_m,\{\chi _D(n)\} )$-dedicated sight.

So let $\zeta$ be such a function. By the definition of `dedicated' it follows that for all $n$, $\zeta (n)$ is of the form $\langle i,x\rangle$ with $i\in\{ 0,1\}$; and if $i=1$, then $x=\langle n,e\rangle$.

By the recursion theorem, let $f$ be an index such that;\begin{rlist}
\item $f\langle 0,x\rangle =x$
\item for $f\langle 1,\langle n,e\rangle\rangle$, search for the least computation witnessing that there are $m+1$ distinct elements $a_1,\ldots ,a_{m+1}\in\alpha$ such that $ea_1,\ldots ,ea_{m+1}$ are all defined and moreover,
$$f(ea_1)=\cdots =f(ea_{m+1})$$ If this is found, put  $f\langle 1,\langle n,e\rangle\rangle =f(ea_1)$; if not, $f\langle 1,\langle n,e\rangle\rangle$ is undefined.\end{rlist}
We claim that the index $f$ has the following property:\begin{itemize}
\item[$(S)$] For every $\langle i,x\rangle\in\mathbb{N}$ and every $(\langle i,x\rangle ,O^{\alpha}_m,\{\chi _D(n)\} )$-dedicated sight $S$, $f\langle i,x\rangle =\chi _D(n)$\end{itemize}
Note that this implies the statement in the theorem: for all $n$ we have $f(\zeta(n))=\chi _D(n)$, which means that $D$ is recursive.

So it suffices to prove the claim $(S)$, which we do by induction on the sight $S$. If $S={\sf NIL}$ and $S$ is $(\langle i,x\rangle ,O^{\alpha}_m,\{\chi _D(n)\} )$-dedicated, then $i=0$ and $x=\chi _D(n)$; and $f\langle i,x\rangle =x=\chi _D(n)$.

Suppose $S=(A,\sigma)$ with $A\in O^{\alpha}_m$. Then $\langle i,x\rangle =\langle 1,\langle n,e\rangle\rangle$, $ea$ is defined for all $a\in A$, and $\sigma (a)$ is $(ea,O^{\alpha}_m,\{\chi _D(n)\} )$-dedicated. By induction hypothesis, for each $a\in A$ we have $f(ea)=\chi _D(n)$. There are at least $m+1$ elements in $A$ since $2m<\alpha$. So the search in part ii) of the definition of the index $f$ succeeds. And because every subset of $\alpha$ of cardinality $m+1$ intersects $A$ ($A\in O^{\alpha}_m$), we have $f\langle i,x\rangle =\chi _D(n)$.

This proves the claim and finishes the proof of the theorem.\end{proof} 
\medskip

\noindent For our next array of results, we need some more definitions about sights.
\begin{definition}\label{sectordef}\em ~~~~~~~~~~~~~~~~\begin{rlist}
\item Given a sight $S$, a {\em sector\/} of $S$ is a sight $T$ such that:\begin{alist}
\item for some subset $A$ of the set of leaves of ${\rm Tr}(S)$,
$${\rm Tr}(T)\; =\;\{ s\in {\rm Tr}(S)\, |\,\text{$s$ is an initial segment of some $t\in A$}\}$$
\item $s$ is a good leaf of $T$ if and only if $s$ is a good leaf of $S$.\end{alist}
\item We call a sight $S$ {\em finitary\/} ($n$-{\em ary}, respectively) if ${\rm Tr}(S)$ is a finitely branching ($n$-ary branching) tree.
\item If $z$ is r-defined on a sight $S$ (see \ref{rdefdef}), we write $z[S]$ for the set
$$\{ y\, |\,\text{for some $s\in {\rm Tr}(S)$, $F(z)s=\langle 0,y\rangle$}\}$$
where $F$ is the function from \ref{dedsup}. So if $S$ is $(z,\theta ,p)$-dedicated, we have $z[S]\subseteq p$.\end{rlist}\end{definition}
We are now going to have a closer look at Pitts' local operator: the operator induced by $\{\{ m\, |\, m\geq n\}\, |\, n\in\mathbb{N}\}$ given in~\ref{Pittsloc}. It is easy to see that this family of subsets of $\mathbb{N}$ is, in $({\cal P}^*{\cal P}(\mathbb{N}),\leq )$, isomorphic to the family ${\cal F}$ of cofinite subsets of $\mathbb{N}$.
\begin{proposition}\label{Fnotalpha} Let $1<2m<\alpha <\omega$. Then $\cal F$ and $O^{\alpha}_m$ are incomparable w.r.t.\ the order $\leq _L$. Moreover, ${\cal F}\mbox{$\not\leq$}_LO^{\omega}_1$.\end{proposition}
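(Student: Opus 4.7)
My plan splits the claim into its two constituent inequalities; the $O^\omega_1$ case is then a special instance of one of them.

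For $O^\alpha_m\not\leq_L{\cal F}$ (with $1<2m<\alpha<\omega$) I would invoke the intersection machinery directly: Proposition~\ref{ceiling} yields $d=\ulcorner\alpha/m\urcorner$ members of $O^\alpha_m$ whose intersection is empty, while ${\cal F}$ has the $n$-intersection property for every $n$ (any finite intersection of cofinite subsets of $\mathbb{N}$ is cofinite, hence inhabited). Proposition~\ref{nintcor2} then delivers $O^\alpha_m\not\leq_L{\cal F}$.

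For the harder direction ${\cal F}\not\leq_L O^\alpha_m$ (valid both for $1<2m<\alpha<\omega$ and for the $\alpha=\omega,m=1$ case of the moreover clause), intersection patterns cannot separate the two families, so I would instead exploit Theorem~\ref{alphanoteffective}: no basic local operator $L(O^\alpha_m)$ creates new decidable subobjects of $N$. The strategy is to exhibit a non-recursive set $D$ with $\rho_D\leq_L{\cal F}$; then ${\cal F}\leq_L O^\alpha_m$ would, by transitivity of $\leq_L$, yield $\rho_D\leq_L O^\alpha_m$, and Theorem~\ref{alphanoteffective} would force $D$ recursive, a contradiction.

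Taking $D=K=\{n\,|\, nn\downarrow\}$ (the halting set), I would verify $\rho_K\leq_L{\cal F}$ by exhibiting a uniform family of dedicated sights. Concretely, let $\zeta$ be the total recursive function with $\zeta(n)=\langle 1,\langle 0,e_n\rangle\rangle$, where the recursive function $e_n$ satisfies $e_nm\simeq\langle 0,1\rangle$ if $nn$ converges in at most $m$ steps and $e_nm\simeq\langle 0,0\rangle$ otherwise. Then $A_n=\{m\,|\, e_nm=\langle 0,\chi_K(n)\rangle\}$ is cofinite in both cases---it is $\mathbb{N}$ when $nn\uparrow$, and $\{m\,|\, m\geq k_0\}$ when $nn$ first converges at stage $k_0$---so $A_n\in{\cal F}$. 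The depth-one sight $(A_n,m\mapsto\mathsf{NIL})$ is then $(\zeta(n),{\cal F},\{\chi_K(n)\})$-dedicated (viewing ${\cal F}$ as the function $\{0\}\to{\cal P}^*{\cal P}(\mathbb{N})$ with value ${\cal F}$), and Corollary~\ref{leqLchar2} supplies $\rho_K\leq_L G_{\cal F}$. Combined with the first half of the argument and undecidability of $K$, this closes the proof.

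I anticipate no deep obstacle; the main care required is bookkeeping around indexing conventions to check that the depth-one sights just constructed literally satisfy Definition~\ref{dedicateddef} and that Corollary~\ref{leqLchar2} assembles the uniform $n$-indexed data into the required inequality of joins $\rho_K=\bigvee_n G_{\rho_K(n)}\leq_L G_{\cal F}$.
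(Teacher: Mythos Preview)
Your argument is correct. For $O^\alpha_m\not\leq_L{\cal F}$ you do exactly what the paper does (invoke the $n$-intersection property of ${\cal F}$ together with Proposition~\ref{nintcor2}). For the harder direction ${\cal F}\not\leq_L O^\alpha_m$, however, you take a genuinely different route.

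The paper argues directly with sights: assuming a uniform $z$ works for all cofinite targets, it picks a $(z,O^\alpha_m,\mathbb{N})$-dedicated sight $S$, extracts a finitary $(m{+}1)$-ary \emph{sector} $S'$ (Definition~\ref{sectordef}), notes by K\"onig's Lemma that $z[S']$ is finite, and then plays $S'$ off against a $(z,O^\alpha_m,\mathbb{N}\setminus z[S'])$-dedicated sight $T$ via the joint intersection property of $(m{+}1)$-tons and $O^\alpha_m$ (Propositions~\ref{jipprop} and~\ref{rdefprop}) to reach a contradiction. Your approach instead leverages Theorem~\ref{alphanoteffective}: you observe that $L({\cal F})$ forces the halting set $K$ to be decidable (via the explicit depth-one sights you build), so ${\cal F}\leq_L O^\alpha_m$ would make $K$ recursive. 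This is cleaner in that it avoids the sector machinery entirely and is morally a one-line consequence of ``$O^\alpha_m$ creates no new decidable subobjects while ${\cal F}$ does''; it also foreshadows Theorem~\ref{Pittsarithmetic}. The paper's proof, by contrast, is self-contained at the level of sights and does not depend on the comparatively heavier Theorem~\ref{alphanoteffective}. One small remark: your convention for $\chi_K$ (value $1$ on $K$) is opposite to the one the paper later adopts in the proof of Theorem~\ref{Pittsarithmetic}, but this is immaterial here since Theorem~\ref{alphanoteffective} does not fix a convention.
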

Recall that for $\alpha =\omega$ we have $O^{\omega}_m\cong _LO^{\omega}_1\leq _L{\cal F}$ by~\ref{O1<LOm} and \ref{omegaatom}.

\noindent\begin{proof} Suppose ${\cal F}\leq _LO^{\alpha}_m$ for $1<2m<\alpha\leq\omega$. Choose $z$ such that for every cofinite $X$ there is a $(z,O^{\alpha}_m,X)$-dedicated sight. Pick such a sight for $X=\mathbb{N}$, say $S$. Since every element of $O^{\alpha}_m$ has at least $m+1$ elements, $S$ has an $(m+1)$-ary sector $S'$. Then $S'$ is $(z,\{\text{the $m+1$-tons $\subset\alpha$}\} ,\mathbb{N})$-dedicated, and $S'$ is finite by K\"onig's Lemma, so $z[S']$ is finite.

Now choose a $(z,O^{\alpha}_m,\mathbb{N}-z[S'])$-dedicated sight $T$. Since:\begin{itemize}
\item[] the sight $S'$ is on $\{\text{the $m+1$-tons $\subset\alpha$}\}$
\item[] the sight $T$ is on $O^{\alpha}_m$
\item[] $\{\text{the $m+1$-tons $\subset\alpha$}\}$ and $O^{\alpha}_m$ have the joint intersection property\end{itemize}
by~\ref{jipprop} there is a coded sequence $d$ which is an element of ${\rm Tr}(S')\cap {\rm Tr}(T)$ and a good leaf of one of them; but since $z$ is r-defined on both $S'$ and $T$, by \ref{rdefprop} $d$ is a good leaf of both of them. But now we get a contradiction: $F(z)d\in z[S']\cap z[T]\subseteq z[S']\cap (\mathbb{N}-z[S'])$.

For the converse inequality (in the case $\alpha <\omega$ we simply note that $\bigcap O^{\alpha}_m=\emptyset$ and that ${\cal F}$ has the $|O^{\alpha}_m|$-intersection property. So $O^{\alpha}_m\mbox{$\not\leq$}_L{\cal F}$ by \ref{nintcor2}.\end{proof}
\medskip

\noindent We now turn to joins in $(M^*,\leq )$ and $(M^*,\leq _L)$. Joins in $(M^*,\leq )$ are easy and follow from the discussion after \ref{Lchar} and theorem~\ref{Mfreecom}: given $\theta ,\zeta :\mathbb{N}\to {\cal P}{\cal P}(\mathbb{N})$, the join $\theta\vee\zeta$ can be given as the map which sends $2n$ to $\theta (n)$ and $2n+1$ to $\zeta (n)$. Of course, the map $L$, being a left adjoint, preserves joins. However, for ${\cal A},{\cal B}\in {\cal P}^*{\cal P}^*(\mathbb{N})$ there is a simpler description of their join w.r.t.\ $\leq _L$, which also makes clear that the join is a basic local operator.

We shall write $\vee _L$ for the join w.r.t. $\leq _L$. Define
$${\cal A}\odot {\cal B}\; =\; \{ A\wedge B\, |\, A\in {\cal A},B\in {\cal B}\}$$
\begin{proposition}\label{joinprop} For ${\cal A},{\cal B}\in {\cal P}^*{\cal P}^*(\mathbb{N})$, the join ${\cal A}\vee _L{\cal B}$ is given by ${\cal A}\odot {\cal B}$.\end{proposition}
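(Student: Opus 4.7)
The plan is to show the two directions separately: first that ${\cal A}$ and ${\cal B}$ are both $\leq_L$-below ${\cal A}\odot {\cal B}$, then that ${\cal A}\odot {\cal B}$ is $\leq_L$-below any common upper bound. The first half is routine: for any $A\in {\cal A}$, pick any $B\in {\cal B}$ (which is possible since ${\cal B}$ is nonempty); then $A\wedge B\in {\cal A}\odot {\cal B}$ and the first projection $\lambda x.(x)_0$ is in $(A\wedge B)\to A$. Hence $\lambda x.(x)_0\in [{\cal A}\leq {\cal A}\odot {\cal B}]$ already in $M^*$, so a fortiori ${\cal A}\leq _L{\cal A}\odot {\cal B}$. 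Symmetrically with $\lambda x.(x)_1$ for ${\cal B}$.

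For the universal property, suppose ${\cal A}\leq _L{\cal C}$ and ${\cal B}\leq _L{\cal C}$. Regard ${\cal C}$ as the function $\zeta _{\cal C}:\{ 0\}\to {\cal P}^*{\cal P}(\mathbb{N})$ with $\zeta _{\cal C}(0)={\cal C}$. Using Corollary \ref{leqLchar}, extract numbers $z_1,z_2$ and, for every $A\in {\cal A}$ and $B\in {\cal B}$, dedicated sights $S_A$ and $T_B$, where $S_A$ is $(z_1,\zeta _{\cal C},A)$-dedicated and $T_B$ is $(z_2,\zeta _{\cal C},B)$-dedicated. The key move is to graft: for each pair $(A,B)$, form the sight $U_{A,B}$ obtained from $S_A$ by replacing every good leaf of $S_A$ with a copy of $T_B$; good leaves of $U_{A,B}$ are exactly those paths whose $S_A$-part ends at a good leaf of $S_A$ and whose $T_B$-part ends at a good leaf of $T_B$.

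To show $U_{A,B}$ is $(z,\zeta _{\cal C},A\wedge B)$-dedicated for a number $z$ independent of $A$ and $B$, I would work through the supporting characterization (Proposition \ref{dedsup}). Let $w_i=F(z_i)$ so that $S_A$ is $(w_1,\zeta _{\cal C},A)$-supporting and $T_B$ is $(w_2,\zeta _{\cal C},B)$-supporting. Define a partial recursive function $w$ on coded sequences $s$ as follows: examine the prefixes of $s$ and look for the smallest initial segment $s'$ with $w_1(s')=\langle 0,a\rangle$; if no such prefix exists, set $w(s)=w_1(s)$ (necessarily of the form $\langle 1,n\rangle$); otherwise, write $s=s'\ast s''$, compute $w_2(s'')$, and output $\langle 0,\langle a,b\rangle\rangle$ if $w_2(s'')=\langle 0,b\rangle$, or else pass on $w_2(s'')=\langle 1,n\rangle$. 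A case split according to whether $s$ lies strictly inside the $S_A$-part of $U_{A,B}$, at the boundary, or strictly inside a grafted $T_B$-part shows that $w$ witnesses $U_{A,B}$ being $(w,\zeta _{\cal C},A\wedge B)$-supporting; the value $z=G(w)$ is then independent of $A,B$, giving ${\cal A}\odot {\cal B}\leq _L{\cal C}$ via Corollary \ref{leqLchar}.

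The main obstacle is just the bookkeeping in the case analysis in the last paragraph: ensuring that the "Out" of any internal node of $U_{A,B}$ really lies in ${\cal C}$ (which it does, inherited from $S_A$ above the boundary and from $T_B$ below it), that the boundary node on the $S_A$-leaf $s'$ correctly issues $w_2(\langle\rangle)$ so that its outgoing set agrees with the root-out of $T_B$, and that good-leaf outputs are $\langle 0,\langle a,b\rangle\rangle$ with $\langle a,b\rangle\in A\wedge B$. A minor side issue is the degenerate case: if some $S_A$ is degenerate then by Proposition \ref{degenerate->empty} one has $\emptyset\in {\cal C}$, whence $G_{\cal C}(\emptyset )\neq\emptyset$ and $L(G_{\cal C})$ is trivial by Proposition \ref{trivnotnot}(i), so everything is trivially $\leq _L{\cal C}$ and there is nothing to prove; otherwise all the sights involved are nondegenerate and the construction goes through as described.
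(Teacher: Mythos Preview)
Your argument is correct in spirit but takes a substantially more laborious route than the paper's. The paper dispatches the universal property in one line: from $a\in\bigcap_{A\in {\cal A}}L(f)(A)$ and $b\in\bigcap_{B\in {\cal B}}L(f)(B)$, the local-operator axiom $L(f)(p)\wedge L(f)(q)\to L(f)(p\wedge q)$ (equivalently, Definition~\ref{locdef}(b)) immediately yields an element of $\bigcap_{A,B}L(f)(A\wedge B)$, which by Proposition~\ref{Gembed}(b) gives ${\cal A}\odot {\cal B}\leq L(f)$. No sights are needed. Your grafting construction is the concrete unwinding of exactly this step through the machinery of Section~4; it works, and it is instructive to see how the $\wedge$-preservation of a local operator manifests at the level of sights, but it is overkill here.

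There is one genuine (if easily repaired) gap: you only test the universal property against basic upper bounds ${\cal C}\in {\cal P}^*{\cal P}(\mathbb{N})$, whereas the join is meant in $(M^*,\leq _L)$ and must be checked against arbitrary $f\in M^*$. The fix is to replace your $\zeta _{\cal C}:\{ 0\}\to {\cal P}^*{\cal P}(\mathbb{N})$ by a general $\theta :B\to {\cal P}^*{\cal P}(\mathbb{N})$ (every $f\in M^*$ is isomorphic to some $G_{\theta}$ by Theorem~\ref{Mfreecom}); your grafting and the definition of $w$ go through verbatim, since the only role of the ``$\langle 1,0\rangle$'' values is to be passed through unchanged, and nothing in your argument uses that the second coordinate is $0$. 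Also, your separate treatment of the degenerate case is harmless but unnecessary: if $S_A$ has a bad leaf $s$, then $w(s)=w_1(s)=\langle 1,n\rangle$ with ${\rm Out}_{U_{A,B}}(s)=\emptyset\in\theta (n)$, so the supporting condition is still met.
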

\begin{proof} It is easy that ${\cal A}\leq {\cal A}\odot {\cal B}$ hence also $\leq _L$; and, of course, the same for $\cal B$. If ${\cal A},{\cal B}\leq _Lf$ so ${\cal A},{\cal B}\leq L(f)$ we have $a\in\bigcap_{A\in {\cal A}}L(f)(A)$, $b\in\bigcap_{B\in {\cal B}}L(f)(B)$ which, using that $L(f)$ is a local operator, gives an element of
$$\bigcap_{A\in {\cal A},B\in {\cal B}}L(f)(A\wedge B)$$ 
which means that ${\cal A}\odot {\cal B}\leq L(f)$.\end{proof}
\begin{proposition}\label{nintand} Suppose ${\cal A}_1,\ldots ,{\cal A}_k\in {\cal P}^*{\cal P}^*(\mathbb{N})$ such that each ${\cal A}_i$ has the $n_i$-intersection property. Then ${\cal A}_1\odot\cdots\odot {\cal A}_k$ has the $m$-intersection property if and only if $m\leq\min\{ n_1,\ldots ,n_k\}$.\end{proposition}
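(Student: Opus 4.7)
The plan is to reduce the $m$-intersection property of the product ${\cal A}_1\odot\cdots\odot{\cal A}_k$ to a statement about the factors individually, using that the pairing $A\wedge B=\{\langle a,b\rangle\,|\,a\in A,b\in B\}$ distributes over finite intersections. First I would establish the identity
$$
\bigcap_{i=1}^{m}\bigl(A_i^{(1)}\wedge\cdots\wedge A_i^{(k)}\bigr)\;=\;\Bigl(\bigcap_{i=1}^m A_i^{(1)}\Bigr)\wedge\cdots\wedge\Bigl(\bigcap_{i=1}^m A_i^{(k)}\Bigr),
$$
which is immediate from unfolding definitions, once we fix a bracketing of $\wedge$: two coded tuples agree iff all their components agree. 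Combined with the observation that a $k$-fold $\wedge$ of subsets of $\mathbb{N}$ is inhabited iff each factor is inhabited, this identity says that the left-hand side is nonempty iff $\bigcap_i A_i^{(j)}\neq\emptyset$ for each $j=1,\ldots,k$.

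Because the choices of $A_i^{(j)}\in{\cal A}_j$ are independent across $j$, I then conclude that ${\cal A}_1\odot\cdots\odot{\cal A}_k$ has the $m$-intersection property if and only if every ${\cal A}_j$ separately has the $m$-intersection property. To finish, I would invoke the elementary fact (interpreting $n_j$ as the largest $n\in\mathbb{N}\cup\{\infty\}$ for which ${\cal A}_j$ has the $n$-intersection property) that ${\cal A}_j$ has the $m$-intersection property precisely when $m\leq n_j$: the ``if'' direction comes by padding an $m$-tuple with repetitions to an $n_j$-tuple without changing its intersection, and the ``only if'' direction is just the maximality of $n_j$. Intersecting this condition over $j$ yields $m\leq\min\{n_1,\ldots,n_k\}$.

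No step looks genuinely hard; the only subtle point is that the distributive identity is applied to \textbf{possibly repeating} $m$-tuples. This is what permits the quantifiers over the $k$ factors to decouple, and it is also what makes the reduction to ``each ${\cal A}_j$ has the $m$-intersection property'' symmetric: without repeats allowed, the padding step in the final paragraph would not go through and we would have to track multiplicities more carefully.
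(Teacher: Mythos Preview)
Your argument is correct and amounts to the same content as the paper's proof, just unpacked more explicitly: where the paper says ``use induction on $k$'' for the forward direction and ``if some ${\cal A}_i$ fails the $m$-intersection property then so does the product'' for the converse, you establish both at once via the distributive identity $\bigcap_i(A_i^{(1)}\wedge\cdots\wedge A_i^{(k)})=(\bigcap_i A_i^{(1)})\wedge\cdots\wedge(\bigcap_i A_i^{(k)})$, which is precisely what the induction would unwind to. You also make explicit the reading of $n_j$ as the \emph{maximal} $n$ for which ${\cal A}_j$ has the $n$-intersection property; this is indeed required for the ``only if'' direction and is implicit in the paper's formulation.
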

\begin{proof} In one direction, use induction on $k$; in the other, observe that if some ${\cal A}_i$ does not have the $m$-intersection property, then ${\cal A}_1\odot\cdots\odot {\cal A}_k$ cannot have it.\end{proof}
\begin{proposition}\label{alphaofFnotnot} Let $1<2m<\alpha\leq\omega$. Then $O^{\alpha}_m\vee _L{\cal F}<_L\neg\neg$.\end{proposition}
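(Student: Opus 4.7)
The plan is to combine Proposition~\ref{joinprop}, which rewrites the join, with the $n$-intersection machinery of Proposition~\ref{nintand} and the non-inequality criterion of Proposition~\ref{nintcor2}. Both inequalities — $O^{\alpha}_m\vee_L{\cal F}\leq_L\neg\neg$ and $\neg\neg\not\leq_L O^{\alpha}_m\vee_L{\cal F}$ — then fall out of intersection-property bookkeeping, with no sights argument needed beyond what is packaged in those propositions.

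First I would replace the join by $O^{\alpha}_m\odot{\cal F}$ via Proposition~\ref{joinprop}. For the upper inequality $\leq_L\neg\neg$ I would invoke Proposition~\ref{omegaatom}(ii): since each $A\in O^{\alpha}_m$ is nonempty (it has $\alpha-m\geq m+1\geq 2$ elements, using $2m<\alpha$) and each $B\in{\cal F}$ is cofinite in $\mathbb{N}$ hence nonempty, every $A\wedge B=\{\langle a,b\rangle\,|\,a\in A,b\in B\}$ is nonempty, so $\emptyset\notin O^{\alpha}_m\odot{\cal F}$; therefore $O^{\alpha}_m\odot{\cal F}\subseteq\{p\subseteq\mathbb{N}\,|\,p\neq\emptyset\}$ in $({\cal P}^*{\cal P}(\mathbb{N}),\leq)$, which gives $\leq_L\neg\neg$.

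For the strict inequality I would verify that $O^{\alpha}_m\odot{\cal F}$ has the $2$-intersection property, and then apply Proposition~\ref{nintcor2} with the representation $\neg\neg\cong_L G_{\{\{0\},\{1\}\}}$, which exhibits two sets ($\{0\}$ and $\{1\}$) with empty intersection. For the $2$-intersection property, the family ${\cal F}$ of cofinite sets has the $n$-intersection property for every $n$ (a finite intersection of cofinite sets is cofinite, hence nonempty). For $O^{\alpha}_m$, any two co-$m$-tons $A_1,A_2\subseteq\alpha$ satisfy $|\alpha\setminus(A_1\cap A_2)|\leq 2m<\alpha$, so $A_1\cap A_2\neq\emptyset$; thus $O^{\alpha}_m$ has the $2$-intersection property (this also follows directly from Proposition~\ref{ceiling} when $\alpha<\omega$, since $\lceil\alpha/m\rceil\geq 3$). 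Proposition~\ref{nintand} then yields the $2$-intersection property for $O^{\alpha}_m\odot{\cal F}$, and Proposition~\ref{nintcor2} gives $G_{\{\{0\},\{1\}\}}\not\leq_L O^{\alpha}_m\odot{\cal F}$, i.e.\ $\neg\neg\not\leq_L O^{\alpha}_m\vee_L{\cal F}$.

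There is no real obstacle: the work has already been done in Propositions~\ref{joinprop}, \ref{nintand}, \ref{nintcor2}, and \ref{omegaatom}, so the proof is essentially a one-line application once one notices that ${\cal F}$ is $n$-intersecting for all $n$ and that the hypothesis $2m<\alpha$ is exactly what is needed to ensure $O^{\alpha}_m$ itself has the $2$-intersection property. The only point to watch is making sure the case $\alpha=\omega$ is not lost; there $O^{\omega}_m$ trivially has the $n$-intersection property for every $n$, so the argument goes through uniformly.
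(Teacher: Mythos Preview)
Your proposal is correct and follows essentially the same route as the paper: represent the join as $O^{\alpha}_m\odot{\cal F}$ via Proposition~\ref{joinprop}, note that $\emptyset\notin O^{\alpha}_m\odot{\cal F}$ gives $\leq_L\neg\neg$ (the paper leaves this to the reader; you spell it out via Proposition~\ref{omegaatom}(ii)), and obtain strictness by combining the $2$-intersection property from Proposition~\ref{nintand} with Proposition~\ref{nintcor2} applied to $\neg\neg\cong_L\{\{0\},\{1\}\}$. The additional detail you supply (checking $2m<\alpha$ gives the $2$-intersection property for $O^{\alpha}_m$, and that ${\cal F}$ is $n$-intersecting for all $n$) is exactly what the paper's terse proof presupposes.
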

\begin{proof} It is left to the reader that $O^{\alpha}_m\odot {\cal F}\leq \neg\neg$. To prove that $\neg\neg\mbox{$\not\leq$}_LO^{\alpha}_m\odot {\cal F}$, observe that $\neg\neg =L(\{\{ 0\} ,\{ 1\}\} )$ and that $O^{\alpha}_m\odot {\cal F}$ has, by \ref{nintand} the $2$-intersection property; so \ref{nintcor2} can be applied.\end{proof}
\begin{proposition}\label{of2mm<notnot} Let $1\leq k\in\mathbb{N}$. Then $(\bigvee_{1\leq m\leq k} )_LO^{2m+1}_m<_L\neg\neg$.\end{proposition}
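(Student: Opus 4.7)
The plan mirrors the argument used in Proposition~\ref{alphaofFnotnot}, invoking the description of $\vee_L$ on basic operators (Proposition~\ref{joinprop}) together with the intersection‑property machinery developed in Proposition~\ref{nintcor2} and Proposition~\ref{nintand}.

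First I would rewrite the join. By Proposition~\ref{joinprop} applied iteratively,
$$\bigvee_{1\le m\le k}{}_{L}\,O^{2m+1}_m\;\cong_L\;O^{3}_1\odot O^{5}_2\odot\cdots\odot O^{2k+1}_k.$$
Call this family ${\cal B}$. Every set appearing in ${\cal B}$ is of the form $A_1\wedge\cdots\wedge A_k$ with $A_m\in O^{2m+1}_m$; since each $A_m$ has $m+1\ge 2$ elements, all these sets are inhabited, so $\emptyset\notin{\cal B}$. Hence ${\cal B}\leq\{p\subseteq\mathbb{N}\mid p\neq\emptyset\}$ in $M^*$, which gives $L(G_{\cal B})\leq\neg\neg$ (using that $\neg\neg$ is the largest nontrivial local operator; see the discussion before Proposition~\ref{trivnotnot}).

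For strictness, I would use Proposition~\ref{ceiling} to pin down the intersection property of each $O^{2m+1}_m$: for $m\ge 1$, $\lceil(2m+1)/m\rceil=3$, so $O^{2m+1}_m$ has the $2$-intersection property but fails the $3$-intersection property. By Proposition~\ref{nintand}, ${\cal B}=\bigodot_{m=1}^{k}O^{2m+1}_m$ therefore has the $2$-intersection property. On the other hand, $\neg\neg$ is represented by $\{\{0\},\{1\}\}$, which contains two sets with empty intersection. Proposition~\ref{nintcor2} (with $n=2$) then gives $\neg\neg\not\leq_L G_{\cal B}$, completing the proof.

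There isn't really a hard step here — the whole argument is a direct instance of the recipe that was already used for Proposition~\ref{alphaofFnotnot}. The only thing to check carefully is the combinatorial computation $\lceil(2m+1)/m\rceil=3$ for every $m\ge 1$, which ensures that adding more summands to the join keeps the $2$-intersection property intact and thus keeps the join strictly below $\neg\neg$.
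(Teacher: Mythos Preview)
Your proof is correct and follows exactly the paper's approach: apply Proposition~\ref{nintand} to see that $\bigodot_{1\le m\le k}O^{2m+1}_m$ has the $2$-intersection property, then invoke Proposition~\ref{nintcor2} with $\neg\neg\cong_L\{\{0\},\{1\}\}$ to obtain $\neg\neg\not\leq_L$ the join. Your write-up is in fact more complete than the paper's, which leaves the $\leq_L$ direction and the computation $\lceil(2m+1)/m\rceil=3$ implicit.
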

\begin{proof} By \ref{nintand}, $\bigodot_{1\leq m\leq k}O^{2m+1}_m$ has the 2-intersection property, so again by \ref{nintcor2} we have $\neg\neg\mbox{$\not\leq$}_L\bigodot_{1\leq m\leq k}O^{2m+1}_m$.\end{proof}
\begin{proposition}\label{of2mmnot<F} Let $1\leq k\in\mathbb{N}$. Then $(\bigvee_{1\leq m\leq k})_L\mbox{$\not\leq$}{\cal F}$.\end{proposition}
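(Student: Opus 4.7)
\medskip
\noindent\textbf{Proof proposal.} The plan is to apply \ref{joinprop} and then \ref{nintcor2}, having first computed the intersection properties of the relevant $\odot$-join via \ref{ceiling} and \ref{nintand}. The whole argument is a direct assembly of lemmas already established, so I expect no real obstacle.

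To begin, \ref{joinprop} identifies $(\bigvee_{1\leq m\leq k})_L O^{2m+1}_m$ with the $\odot$-join $\bigodot_{1\leq m\leq k} O^{2m+1}_m$, reducing the problem to showing that this latter family is not $\leq _L{\cal F}$. Next I would use \ref{ceiling} to read off the intersection properties of the factors: for every $m\geq 1$, the least number of elements of $O^{2m+1}_m$ with empty intersection is $\ulcorner(2m+1)/m\urcorner$, which equals $3$. Hence each $O^{2m+1}_m$ has the $2$-intersection property but fails the $3$-intersection property. By \ref{nintand}, the $\odot$-join $\bigodot_{1\leq m\leq k} O^{2m+1}_m$ therefore has exactly the $2$-intersection property, and in particular contains three elements with empty intersection---concretely, take $\{1,2\}$, $\{1,3\}$ and $\{2,3\}$ as three different choices in the $m=1$ factor, keeping any fixed choice in each remaining factor.

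On the other hand, ${\cal F}$ is the family of cofinite subsets of $\mathbb{N}$, so any finite intersection of members of ${\cal F}$ is cofinite and hence nonempty; thus ${\cal F}$ has the $n$-intersection property for every $n\geq 1$, in particular for $n=3$. The final step is to apply \ref{nintcor2} with $n=3$, ${\cal A}=\bigodot_{1\leq m\leq k} O^{2m+1}_m$, and ${\cal B}={\cal F}$, which yields ${\cal A}\mbox{$\not\leq$}_L{\cal B}$---exactly the claim of the proposition. The only arithmetical observation needed is $\ulcorner(2m+1)/m\urcorner=3$ for every $m\geq 1$, which is immediate.
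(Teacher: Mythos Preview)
Your proposal is correct and follows essentially the same route as the paper: exhibit that the $\odot$-join $\bigodot_{1\leq m\leq k}O^{2m+1}_m$ fails the $3$-intersection property (the paper singles out the factor $O^3_1$, while you observe via \ref{ceiling} that every factor already fails it), note that ${\cal F}$ has the $3$-intersection property, and conclude by \ref{nintand} and \ref{nintcor2}. Your version is more explicit but not substantively different.
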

\begin{proof} $O^3_1$ does not have the 3-intersection property. Apply \ref{nintand} and \ref{nintcor2}.\end{proof}
\medskip

\noindent {\bf Open Problem}. One might be able to mimic (the proof of) \ref{Fnotalpha} to show that $${\cal F}\mbox{$\not\leq$}_L\bigodot_{1\leq m\leq k}O^{2m+1}_m$$
We have not been able to carry this out, however.
\medskip

\noindent We conclude with a theorem saying that Pitts' local operator $L({\cal F})$ forces every arithmetically definable set of numbers to be decidable. This implies that the subtopos of $\Eff$ corresponding to this local operator, although not a Boolean topos, nevertheless satisfies true arithmetic, as will be proved in \ref{pitts=true}. First a lemma:
\begin{lemma}\label{Pittsarithmlemma} Let $j$ be a local operator. Then for every recursive function $F$, acting on coded sequences, we have a partial recursive function $G$ (obtained uniformly in $F$) such that for each $n$, each coded sequence $\sigma =\langle\sigma _0,\ldots ,\sigma _{n-1}\rangle$ and each tuple $(a_0,\ldots ,a_{n-1})$ such that $a_i\in j(\{\sigma _i\} )$ for each $i$, we have
$$G(\langle a_0,\ldots ,a_{n-1}\rangle )\;\in\; j(\{ F(\sigma )\} )$$\end{lemma}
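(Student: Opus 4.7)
The plan is to exploit the realizability structure of $j$ and express $G$ as the composition of two uniformly realized operations: first, folding the $n$-tuple of $j$-realizers $a_0,\ldots,a_{n-1}$ into a single realizer of $j$ applied to an iterated conjunction; second, pushing the folded realizer through $F$ by functoriality of $j$.

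To begin, I would extract from Definition~\ref{locdef} three realizers. Since the defining properties of a local operator are true in $\Eff$, there exist indices $\eta,\mu,\phi$ such that $\eta x\in j(p)$ whenever $x\in p$ (from the reformulation of (a)), $\mu\langle a,b\rangle\in j(p\wedge q)$ whenever $a\in j(p)$ and $b\in j(q)$ (from the $\leftarrow$ direction of (b)), and $\phi\cdot r\cdot a\in j(q)$ whenever $r:p\to q$ and $a\in j(p)$ (from (i)). These indices depend only on $j$.

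Next, I would construct via the recursion theorem a partial recursive function $M$ so that, if $w=\langle a_0,\ldots,a_{n-1}\rangle$ with $n\geq 1$ and $a_i\in j(\{\sigma_i\})$ for each $i$, then $M(w)\in j(\{\tau_n\})$, where $\tau_n$ denotes the left-nested pair $\langle\cdots\langle\sigma_0,\sigma_1\rangle,\ldots,\sigma_{n-1}\rangle$. Concretely, set $M(\langle a_0\rangle)=a_0$ and $M(v\ast\langle a\rangle)=\mu\langle M(v),a\rangle$ when ${\rm lh}(v)\geq 1$; correctness follows because in the paper's convention $\{\sigma_0\}\wedge\cdots\wedge\{\sigma_{n-1}\}$, associated to the left, collapses to the singleton $\{\tau_n\}$.

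Finally, let $\beta_n$ be a primitive recursive function sending $\tau_n$ to the flat code $\sigma=\langle\sigma_0,\ldots,\sigma_{n-1}\rangle$. Then $r_n:=\lambda x.F(\beta_n(x))$ realizes $\{\tau_n\}\to\{F(\sigma)\}$, hence $\phi(r_n)(M(w))\in j(\{F(\sigma)\})$ for $n\geq 1$. I define
$$G(w)\;=\;\left\{\begin{array}{cl}\eta(F(\langle\rangle)) & \text{if }{\rm lh}(w)=0,\\ \phi(r_{{\rm lh}(w)})(M(w)) & \text{if }{\rm lh}(w)\geq 1.\end{array}\right.$$
This $G$ is partial recursive and depends uniformly on an index for $F$, exactly as required. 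The only delicate point is matching the paper's convention for $\wedge$ between subsets of $\mathbb{N}$ with the standard sequence coding for $\langle\sigma_0,\ldots,\sigma_{n-1}\rangle$; once that bookkeeping is settled the verification is a straightforward induction on $n$.
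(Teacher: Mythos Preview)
Your proof is correct and follows essentially the same approach as the paper: fold the $j$-realizers together using the meet-preservation of $j$, then push the result through $F$ by monotonicity. The paper's version differs only cosmetically---it concatenates into the flat code $\sigma$ at each step of the fold (via a realizer of $j(\{\sigma\}\wedge\{a\})\to j(\{\sigma\ast\langle a\rangle\})$) rather than building the nested pair $\tau_n$ and decoding at the end, and it handles $n=0$ uniformly by starting the recursion from an element $c\in j(\{\langle\rangle\})$ instead of special-casing the empty sequence.
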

\begin{proof} First we define $H$ such that for $a_0\in j(\{\sigma _0\} ),\ldots ,a_{n-1}\in j(\{\sigma _{n-1}\} )$ we have $H(\langle a_0,\ldots ,a_{n-1}\rangle )\in j(\{\sigma \} )$. Since $F:\{\sigma\}\to \{ F(\sigma )\}$ we have by monotony of $j$ an element of $\bigcap_{\sigma}[j(\{\sigma\} )\to j(\{ F(\sigma )\} )]$ so if we compose this with $H$ we have our desired function $G$.

Since $j$ is a local operator we have elements:
$$\begin{array}{rcl} c & \in & j(\{\langle\rangle\} ) \\
\beta & \in & \bigcap_{p,q}[j(p)\wedge j(q)\to j(p\wedge q)] \\
\gamma & \in & \bigcap_{\sigma ,a}{[}j(\{\sigma\}\wedge\{ a\} )\to j(\{\sigma\ast\langle a\rangle\} ){]} \end{array}$$
Define $G$ by recursion on $n$:
$$\begin{array}{rcl} G(\langle\rangle ) & = & c \\
G(\langle a_0,\ldots ,a_n\rangle ) & = & \gamma (\beta\langle G(\langle a_0,\ldots ,a_{n-1}\rangle ),a_n\rangle ) \end{array}$$
The trivial verification is left to the reader.\end{proof}
\begin{theorem}\label{Pittsarithmetic} Pitts' local operator, the local operator from~\ref{Pittsloc}, forces every arithmetical set of natural numbers to be decidable.\end{theorem}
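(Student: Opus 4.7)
\medskip

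\noindent The plan is to prove by induction on the arithmetical complexity of a defining formula that for each arithmetical set $D\subseteq\mathbb{N}$, given by a formula $\phi$ with $D=\{n\mid\phi(n)\}$, there is a total recursive function $\zeta_\phi$ such that for each $n$ there exists a $(\zeta_\phi(n),\mathcal{F},\{\chi_D(n)\})$-dedicated sight. By corollary~\ref{leqLchar2} this will give $\rho_D\leq_L\mathcal{F}$, so that Pitts' operator forces $D$ to be decidable. The base case of a $\Delta^0_0$ formula is immediate, since then $\chi_D$ is primitive recursive and $\zeta_\phi(n)=\langle 0,\chi_D(n)\rangle$ witnesses a $\mathsf{NIL}$ sight. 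For propositional combinations the characteristic function of the combined formula is a primitive recursive function $F$ of the component characteristic functions, and lemma~\ref{Pittsarithmlemma} applied to $F$ converts the inductively given realizers for the subformulas into a realizer for the combined formula.

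\medskip

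\noindent The crux is the quantifier step. It suffices to handle $\phi(n)=\exists k.\,\psi(n,k)$, the universal case being dual. By induction hypothesis there is a total recursive $\zeta_\psi$ with $\zeta_\psi(n,k)\in L(\mathcal{F})(\{\chi_\psi(n,k)\})$ for all $n,k$. Applying lemma~\ref{Pittsarithmlemma} to the primitive recursive ``finite-or'' function on coded sequences yields, uniformly in $n,m$, a realizer
\[
R_{n,m}\in L(\mathcal{F})(\{\chi_{\exists k\leq m.\,\psi(n,k)}\})
\]
constructed from $\langle\zeta_\psi(n,0),\ldots,\zeta_\psi(n,m)\rangle$. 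I would then put
\[
\zeta_\phi(n)\ =\ \langle 1,\langle 0,\lambda m.\, R_{n,m}\rangle\rangle,
\]
which is the realizer of a sight $U_n$ whose root branches on some cofinite $A\subseteq\mathbb{N}$ and whose $m$-th subtree is the sight witnessed by $R_{n,m}$.

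\medskip

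\noindent The main obstacle is verifying that a suitable cofinite $A$ exists, making $U_n$ genuinely $(\zeta_\phi(n),\mathcal{F},\{\chi_D(n)\})$-dedicated even though the realizer $\zeta_\phi(n)$ does not specify $A$. The sub-sight at branch $m$ is dedicated to the singleton $\{\chi_{\exists k\leq m.\,\psi(n,k)}\}$, which coincides with $\{\chi_D(n)\}$ precisely when $\chi_{\exists k\leq m.\,\psi(n,k)}=\chi_D(n)$, and by the eventual stabilisation of bounded search this holds cofinitely in $m$: if $\phi(n)$ is true with least witness $k_0$ take $A=\{m\mid m\geq k_0\}$, and if $\phi(n)$ is false take $A=\mathbb{N}$. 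Since definition~\ref{dedicateddef} demands only the \emph{existence} of such an $A\in\mathcal{F}$, the induction closes. The essential feature exploited is that $\mathcal{F}$ consists of all cofinite subsets of $\mathbb{N}$, matching the eventual constancy of bounded arithmetical predicates -- this is precisely what distinguishes Pitts' operator from the other basic operators of section~5 in yielding full arithmetical decidability.
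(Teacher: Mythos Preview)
Your proof is correct and follows essentially the same route as the paper's. Both arguments reduce to closure under existential quantification, use Lemma~\ref{Pittsarithmlemma} with the ``finite-or'' function to produce realizers $R_{n,m}\in L(\mathcal{F})(\{\chi_{\exists k\leq m.\psi(n,k)}\})$, and exploit the cofinite stabilisation of bounded search; the only difference is that you package the last step as the explicit construction of a dedicated sight with cofinite root-branching, whereas the paper phrases it abstractly as $H(x)\in g(L(g)(\{\chi_{\exists A}(x)\}))$ together with the realizer $b\in\bigcap_p[g(L(g)(p))\to L(g)(p)]$ from Proposition~\ref{Lchar}.
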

\begin{proof} Let $\chi _D$ denote the characteristic function of a set $D$; to be specific let $\chi _D(n)=0$ if $n\in D$, and 1 otherwise. We write ${\uparrow} n$ for $\{ m\in\mathbb{N}\, |\, m\geq n\}$.

Let $g$ be the function which sends $p\subseteq\mathbb{N}$ to $\bigcup_n[({\uparrow} n)\to p]$, so Pitts' local operator is $L(g)$. Recall that $L(g)$ forces a set $D$ to be decidable if and only if there is a total recursive function which sends each $n$ to an element of $L(g)(\{\chi _D(n)\} )$. Let $\cal A$ be the class of sets forced by $L(g)$ to be decidable; then $\cal A$ contains the recursive sets and is closed under complements, so it suffices to see that $\cal A$ is closed under existential quantification: if $A\in {\cal A}$ then also $\exists A\in {\cal A}$, where
$$\exists A\; =\; \{ x\, |\, \exists n(\langle x,n\rangle\in A)\}$$
Let $F$ be the function which sends a sequence $\sigma =\langle\sigma _0,\ldots ,\sigma _{n-1}\rangle$ to 0 if at least for one $i$, $\sigma _i=0$, and to 1 otherwise. Let $G$ be the partial recursive function obtained by Lemma~\ref{Pittsarithmlemma}, with $L(g)$ for $j$.

Assuming $A\in {\cal A}$ let $F_A\in\bigcap_n[\{ n\}\to L(g)(\{\chi _A(n)\} )]$. For $x$ and $n$ consider the sequence
$$\langle F_A(\langle x,0\rangle ),\ldots ,F_A(\langle x,n\rangle )\rangle$$
We have $F_A(\langle x,i\rangle )\in L(g)(\{\chi _A(\langle x,i\rangle )\} )$. By using $G$ we construct a total recursive function $H$ such that for all $x,n$:
$$\begin{array}{rcll}
H(x)n & \in & L(g)(\{ 0\} ) & \text{if for some $m\leq n$, $\langle x,m\rangle\in A$} \\
H(x)n & \in & L(g)(\{ 1\} ) & \text{otherwise}\end{array}$$
We see that if for some $n$, $\langle x,n\rangle\in A$, then $H(x)k\in L(g)(\{ 0\} )$ for all sufficiently large $k$; if there is no $n$ with $\langle x,n\rangle\in A$ then $H(x)k\in L(g)(\{ 1\} )$ always. We conclude that
$$H(x)\; \in\; \bigcup_m[({\uparrow}m)\to L(g)(\{\chi _{\exists A}(x)\} )]$$
in other words, $H(x)\in g(L(g)(\{\chi _{\exists A}(x)\} ))$.

From the proof of \ref{Lchar} we know that there is an element 
$$b\in\bigcap_{p\subseteq\mathbb{N}}[g(L(g)(p))\to L(g)(p)]$$
Composing with $H(x)$ we get an element
$$\lambda x.b(H(x))\;\in\; \bigcap_x[\{ x\}\to L(g)(\{\chi _{\exists A}(x)\} )]$$
which was what we had to find.\end{proof}
\medskip

\noindent {\bf Open Problem}. Are the arithmetical sets {\em all\/} the sets which are forced to be decidable by Pitts' local operator?
\section{$\theta$-Realizability}
In this section we give a concrete presentation of a truth definition for first-order arithmetic in the subtopos of $\Eff$ determined by the local operator $L(G_{\theta})$, where $\theta :B\to {\cal P}^*{\cal P}(\mathbb{N})$. For background on the theory of triposes, the reader is referred to \cite{OostenJ:reaics}.

In general, if $R_X:P(X)\to P(X)$ is a local operator on a tripos $P$, the subtripos corresponding to $R$ can be presented as follows: the underlying set of the fibre over a set $X$ is just $P(X)$, and the order is given by the relation $\leq _R$ where $\phi\leq _R\psi$ if and only if $\phi\leq R(\psi )$ in the tripos $P$. Denoting this tripos by $(P,\leq _R)$, the inclusion into $(P,\leq )$ is given by the map $R$; its left adjoint is the identity function. This last map preserves $\wedge$, $\vee$ and $\exists$; if we denote implication and universal quantification in the subtripos by $\Rightarrow '$ and $\forall '$ respectively (and those in the original tripos by $\Rightarrow$, $\forall$), the relation is as follows:
$$\begin{array}{rcl} \phi\Rightarrow '\psi & \cong & \phi\Rightarrow R(\psi ) \\
\forall 'x\phi & \cong & \forall xR(\phi ) \end{array}$$
We can now give the truth definition in the form of a notion of realizability.

Recall from definition~\ref{jip} the notion `sight $S$ is {\em on\/} $\theta$'; from definition \ref{rdefdef} the notion `$r$-defined', and from \ref{sectordef} the notation $z[S]$.
\begin{definition}[$\theta$-realizability]\label{thetareal}\em Define a relation between numbers and sentences of arithmetic, pronounced `$n$ $\theta$-{\bf realizes} $\phi$', as follows, by induction on $\phi$:
\begin{itemize}
\item[] $n$ $\theta$-{\bf realizes} $t=s$ if and only if the equation $t=s$ is true;
\item[] $n$ $\theta$-{\bf realizes} $\phi\wedge\psi$ if and only if $n=\langle a,b\rangle$ and $a$ $\theta$-{\bf realizes} $\phi$ and $b$ $\theta$-{\bf realizes} $\psi$;
\item[] $n$ $\theta$-{\bf realizes} $\phi\vee\psi$ if and only if either $n=\langle 0,m\rangle$ and $m$ $\theta$-{\bf realizes} $\phi$, or $n=\langle 1,m\rangle$ and $m$ $\theta$-{\bf realizes} $\psi$;
\item[] $n$ $\theta$-{\bf realizes} $\phi\to\psi$ if and only if for every $m$ such that $m$ $\theta$-{\bf realizes} $\phi$, $nm$ is defined and there is a sight $S$ on $\theta$ such that $nm$ is $r$-defined on $S$ and for every $w\in (nm)[S]$, $w$ $\theta$-{\bf realizes} $\psi$;
\item[] $n$ $\theta$-{\bf realizes} $\neg\phi$ if and only if no number $\theta$-{\bf realizes} $\phi$;
\item[] $n$ $\theta$-{\bf realizes} $\exists x\phi (x)$ if and only if $n=\langle a,b\rangle$ and $b$ $\theta$-{\bf realizes} $\phi (a)$;
\item[] $n$ $\theta$-{\bf realizes} $\forall x\phi (x)$ if and only if for all $m$, $nm$ is defined and there is a sight $S$ on $\theta$ such that $nm$ is $r$-defined on $S$ and for every $w\in (nm)[S]$, $w$ $\theta$-{\bf realizes} $\phi (m)$.\end{itemize}\end{definition}
\begin{proposition}\label{thetarealprop} For $\theta$ as above, a sentence of first-order arithmetic is true in the subtopos of $\Eff$ determined by the local operator $L(G_{\theta})$, if and only if it has a $\theta$-realizer.\end{proposition}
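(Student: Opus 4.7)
The plan is to proceed by structural induction on the formula $\phi$, comparing the sight-theoretic clauses of Definition~\ref{thetareal} with the tripos-theoretic semantics of $\Eff$ modified by the local operator $j = L(G_\theta)$. Recall from the introductory discussion of $\theta$-realizability that in the subtripos $(P,\leq_j)$ the connectives $\wedge$, $\vee$, $\exists$ and atomic formulas are interpreted exactly as in standard Kleene realizability, while $\to$ is replaced by $\phi \Rightarrow j(\psi)$ and $\forall$ by $\forall x\, j(\phi(x))$. Truth of a sentence $\phi$ in the subtopos amounts to inhabitation of $j(\lrint{\phi}_\theta)$, where $\lrint{\phi}_\theta$ denotes the modified interpretation. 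The goal of the induction is to show that the set of $\theta$-realizers of $\phi$ coincides, up to mutual partial-recursive reduction, with $\lrint{\phi}_\theta$; the stated biconditional about truth then follows at once.

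For atomic formulas the equivalence is immediate, since $\lrint{t=s}_\theta$ is $\mathbb{N}$ or $\emptyset$ according to whether the equation holds. The cases $\wedge$, $\vee$ and $\exists$ are equally routine: the clauses in Definition~\ref{thetareal} literally reproduce the standard realizability clauses, which agree with $\lrint{-}_\theta$ on these connectives because the inclusion into the subtripos has an identity left adjoint preserving these operations. The clause for $\neg\phi$ unfolds as $\phi \to \bot$, and standard bookkeeping shows it reduces to the stated ``no number $\theta$-realizes $\phi$''.

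The substantive step is the treatment of $\to$ and $\forall$. Here I would invoke Corollary~\ref{dedsupcorollary}, which identifies $L(G_\theta)(p)$ up to $M^*$-isomorphism with the set of numbers $z$ admitting a $(z,\theta,p)$-supporting sight, together with Proposition~\ref{dedsup} to translate freely between ``dedicated'' and ``supporting''. Unpacking the definitions, a sight $S$ is $(z,\theta,p)$-supporting exactly when $z$ is $r$-defined on $S$ (the tree-shape of $S$ is traced by $z$), $S$ is on $\theta$, and every good-leaf output $w \in z[S]$ satisfies $w \in p$. Hence, by induction hypothesis, $n \in \lrint{\phi \to \psi}_\theta$ iff for every $\theta$-realizer $m$ of $\phi$, $nm \in L(G_\theta)(\{\text{$\theta$-realizers of }\psi\})$, iff there is a sight $S$ on $\theta$ on which $nm$ is $r$-defined with $(nm)[S]$ consisting of $\theta$-realizers of $\psi$, which is exactly the clause in Definition~\ref{thetareal}. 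The clause for $\forall x\,\phi(x)$ is handled in the same way, after noting that universal quantification in the subtripos is $\forall x\, L(G_\theta)(\lrint{\phi(x)}_\theta)$ and that the sight-characterisation of $L(G_\theta)$ is uniform in the input $p$ (so a single recursive index can package the witnesses as $m$ varies).

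The hard part, I expect, will not be any individual case but the careful verification that the correspondences ``$\theta$-realizer $\leftrightarrow$ element of $\lrint{\phi}_\theta$'' can be made uniform with explicit partial-recursive maps in both directions, as is required for the induction to carry through nested implications and quantifiers. The two auxiliary functions $F$ and $G$ from Proposition~\ref{dedsup} do most of the work, but one must track that the ``on $\theta$'' discipline is preserved under composition: a realizer $n$ of $\phi \to \psi$ produces, from a $\phi$-realizer $m$, a number $nm$ whose associated sight must again be on the same $\theta$, which is exactly what the definition requires. Once this bookkeeping is in place, taking $\phi$ a closed sentence and observing that inhabitation of $L(G_\theta)(\lrint{\phi}_\theta)$ is the defining condition for truth in the subtopos yields the proposition.
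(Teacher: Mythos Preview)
The paper states Proposition~\ref{thetarealprop} without proof, treating it as a routine consequence of the tripos-theoretic setup; your structural induction is exactly the way one would fill in the details, and the atomic, $\wedge$, $\vee$, $\exists$, $\neg$ cases go through as you describe.

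There is, however, a genuine gap in your treatment of $\to$ and $\forall$. You claim that ``$S$ is $(z,\theta,p)$-supporting'' unpacks as ``$z$ is $r$-defined on $S$, $S$ is on $\theta$, and $z[S]\subseteq p$''. This is not right: the supporting condition (Definition~\ref{supportingdef}) requires at each non-leaf node $s$ with $ws=\langle 1,n\rangle$ that ${\rm Out}_S(s)\in\theta(n)$ \emph{for that specific index $n$}, whereas ``$S$ is on $\theta$'' (Definition~\ref{jip}) only asks ${\rm Out}_S(s)\in\bigcup_k\theta(k)$. The clause written in Definition~\ref{thetareal} is thus the weaker condition, and what it actually characterises is membership in $L(G_{\theta'})(p)$, where $\theta'$ is the constant function with value $\bigcup_n\theta(n)$, rather than in $L(G_\theta)(p)$. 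For genuinely non-basic $\theta$ these local operators can differ: with $B=\{0,1\}$, $\theta(0)=\{\{0\}\}$, $\theta(1)=\{\{1\}\}$ one computes $L(G_\theta)\cong{\sf id}$ but $L(G_{\theta'})=L(G_{\{\{0\},\{1\}\}})=\neg\neg$, so the corresponding subtoposes ($\Eff$ itself versus Set) validate different arithmetic sentences. Hence Corollary~\ref{dedsupcorollary} does not bridge the Definition~\ref{thetareal} clause to $L(G_\theta)$ as you assert. For \emph{basic} $\theta$ (constant, a single $\mathcal{A}$), which is the paper's principal case, the issue disappears and your argument is complete; for general $\theta$ either the realizability clause should read ``there is a sight $S$ with $S$ $(nm,\theta,\mathbb{N})$-dedicated'' in place of ``$S$ on $\theta$ and $nm$ $r$-defined on $S$'', or the statement should be restricted accordingly. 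This is arguably an imprecision in the paper's Definition~\ref{thetareal} as much as a flaw in your outline.
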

\begin{theorem}\label{pitts=true} Let $j$ be a local operator in $\Eff$ such that $j\leq\neg\neg$ and $j$ forces every arithmetically definable subset of $\mathbb{N}$ to be decidable. Then the subtopos $\Eff _j$ of $\Eff$ determined by $j$ satisfies true arithmetic.\end{theorem}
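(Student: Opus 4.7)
The plan is to prove by induction on the arithmetical formula $\phi(\vec{x})$ the strengthened statement $\Eff_j\models\forall\vec{x}\,(\phi(\vec{x})\leftrightarrow\vec{x}\in D_\phi)$, where $D_\phi:=\{\vec{n}:\mathbb{N}\models\phi(\vec{n})\}$ is the classical truth set (arithmetical of the same complexity as $\phi$). Since $D_\phi$ is arithmetical, it is forced decidable in $\Eff_j$ by hypothesis; moreover $j\le\neg\neg$ forces $L(j)(\emptyset)=\emptyset$, so the $\neg\neg$-embedding of $D_\phi$ is a genuine decidable subobject of $\mathbb{N}^k$ in $\Eff_j$. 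Applied to a closed $\phi$, the strengthening gives $\Eff_j\models\phi\leftrightarrow\phi^*$; non-degeneracy of $\Eff_j$ (again from $j\le\neg\neg$) then yields the theorem.

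The atomic case is immediate, and the cases $\wedge$, $\neg$, $\to$, $\vee$ and $\forall$ all reduce to the induction hypothesis combined with internal case analysis on the decidable subobjects $\vec{x}\in D_{\phi_i}$ (whose truth sets, being themselves arithmetical, are decidable by the same hypothesis): over decidable predicates the $\Eff_j$-Heyting operations on the embedded sets coincide with the classical set-theoretic operations, and the IH-equivalences lift straightforwardly.

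The main obstacle is the existential case $\phi=\exists y\,\psi$. From the induction hypothesis on $\psi$ (and, automatically via decidability, on $\neg\psi$) together with the intuitionistic equivalence $\neg\exists\leftrightarrow\forall\neg$, one obtains in $\Eff_j$ the chain $\neg\exists y\,\psi\leftrightarrow\forall y\,\neg\psi\leftrightarrow\forall y\,((\vec{x},y)\notin D_\psi)\leftrightarrow\vec{x}\notin D_\phi$; decidability of $D_\phi$ then yields $\vec{x}\in D_\phi\leftrightarrow\neg\neg\exists y\,\psi$ in $\Eff_j$. The remaining equivalence therefore reduces to Markov's Principle for $\psi$ in $\Eff_j$, namely $\neg\neg\exists y\,\psi(\vec{x},y)\to\exists y\,\psi(\vec{x},y)$.

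The plan for establishing Markov's Principle uses the decidability witnesses $H_\phi(\vec{n})\in L(j)(\{\chi_{D_\phi}(\vec{n})\})$ and $H_\psi(\vec{n},m)\in L(j)(\{\chi_{D_\psi}(\vec{n},m)\})$ together with Lemma~\ref{Pittsarithmlemma}: the lemma converts tuples of realizers of singletons under $L(j)$ into a realizer of $L(j)(\{F(\sigma)\})$ for any recursive $F$. I would first apply it with $F_k$ a bounded search for the least $m<k$ with $\chi_{D_\psi}(\vec{n},m)=0$, then pair the result with the IH-realizer $r_\psi(\vec{n},m)$ via a second application, obtaining an element of $L(j)(\{\langle m_0,r_\psi(\vec{n},m_0)\rangle\})\subseteq L(j)([\exists y\,\psi(\vec{n},y)]_{\Eff_j})$. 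The technical difficulty is eliminating the external dependence on the search bound $k$; this can be arranged via the recursion theorem, dovetailing over increasing prefixes of the $H_\psi(\vec{n},-)$-sequence. External termination is guaranteed precisely when a realizer of $\neg\neg\exists y\,\psi$ at $\vec{n}$ exists---equivalently, when $\vec{n}\in D_\phi$---so the MP realizer is well-defined wherever its input is.
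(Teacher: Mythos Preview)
Your overall plan coincides with the paper's: a simultaneous induction on $\phi$ establishing that $j$-realizability of $\phi(\vec k)$ agrees with truth in $\mathbb{N}$ (uniformly and effectively in $\vec k$) together with a $j$-realizer of $\forall\vec x(\phi\vee\neg\phi)$; your formulation ``$\Eff_j\models\phi(\vec x)\leftrightarrow\vec x\in D_\phi$'' is just the internal packaging of the same two statements. The propositional and universal cases are unproblematic and match the paper's treatment.

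The existential step, however, is not established by what you wrote. You produce for each bound $k$ an element $G_k(\vec n)\in L(j)(\{F_k(\sigma)\})$ via Lemma~\ref{Pittsarithmlemma}, and then propose to eliminate $k$ by ``the recursion theorem, dovetailing over increasing prefixes of the $H_\psi(\vec n,-)$-sequence''. But each computation $G_k(\vec n)$ \emph{terminates unconditionally}, and its value lies under $L(j)$: you cannot recursively inspect it to decide whether the bounded search succeeded. There is thus no halting criterion for a dovetailing procedure; the ``external termination'' you invoke is a meta-level fact about $\vec n\in D_\phi$ with no computational handle available to the realizer you are building. (You list $H_\phi$ among your data but never actually use it; this is a symptom of the gap.)

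The paper's argument for the $\exists$-step avoids this by observing that the total function
\[
f(\vec k)\;=\;\begin{cases}0&\text{if no $m$ satisfies }\psi(\vec k,m)\\ m_0+1&\text{if $m_0$ is least with }\psi(\vec k,m_0)\end{cases}
\]
is \emph{recursive in} the pair $\chi_{D_\psi},\chi_{D_{\exists y\psi}}$. Both characteristic functions are arithmetical, hence forced total by $j$; by Hyland's Theorem~\ref{HylandTuring} the functions forced total by $j$ are closed under Turing reducibility, so $f$ is forced total as well. This yields directly a recursive $a$ with $a\vec k\in L(j)(\{f(\vec k)\})$, and one application of monotonicity then gives the required element of $L(j)([\exists y\,\psi(\vec k,y)])$. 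A recursion-theorem argument \emph{can} be made to work, but not by dovetailing: one must branch \emph{inside} $L(j)$ on the value $\chi_{D_\psi}(\vec n,k)$ using the monotonicity realizer $\alpha$ (so that the recursive call $R(\vec n,k{+}1)$ is only demanded on the ``no'' branch), and then argue termination by backward induction from the least witness $m_0$, using that $\alpha e x{\downarrow}$ whenever $e$ is defined on the single point under $j$. Your sketch does not isolate this mechanism.
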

\begin{proof} Truth of arithmetic in $\Eff _j$ is given by a realizability as in definition~\ref{thetareal}, which we call $j$-realizability in this proof. We shall not employ sights and simplify the clauses for $\to$ and $\forall$ to:\begin{itemize}\item[] $n$ $j$-{\bf realizes} $\phi\to\psi$ if and only if for every $m$ such that $m$ $j$-{\bf realizes} $\phi$, $nm$ is defined and $nm$ is an element of the set $j(\{ s\, |\, s\text{ $j$-{\bf realizes} $\psi$}\} )$
\item[] $n$ $j$-{\bf realizes} $\forall x\phi (x)$ if and only if for all $m$, $nm$ is defined and is an element of the set $j(\{ s\, |\, s\text{ $j$-{\bf realizes} $\phi (m)$}\} )$\end{itemize}
Since $j\leq\neg\neg$ we have $j(\emptyset )=\emptyset$ and therefore $n$ $j$-{\bf realizes} $\neg\phi$ if and only if no number $j$-{\bf realizes} $\phi$; and $n$ $j$-{\bf realizes} $\neg\neg\phi$ if and only if some number $j$-{\bf realizes} $\phi$. As a further simplification, we modify the definition so that for a string of universal quantifiers we have: $n$ $j$-{\bf realizes} $\forall x_1\cdots\forall x_n\phi$ if and only if for all $k_1,\ldots ,k_n$, $nk_1\cdots k_n$ (which we shall abbreviate as $n\vec{k}$) is defined and an element of $j(\{ s\, |\, s\text{ $j$-{\bf realizes} $\phi (k_1,\ldots ,k_n)$}\} )$.

Since $j$ is a local operator we can fix numbers $\alpha ,\beta ,\gamma ,\delta$ such that:
$$\begin{array}{rcl}\alpha & \in & \bigcap_{p,q\subseteq\mathbb{N}}(p\to q)\to (jp\to jq) \\
\beta & \in & \bigcap_{p\subseteq\mathbb{N}}p\to jp \\
\gamma & \in & \bigcap_{p\subseteq\mathbb{N}}jjp\to jp \\
\delta & \in & \bigcap_{p,q\subseteq\mathbb{N}}jp\wedge jq\to j(p\wedge q)\end{array}$$

We shall now prove by simulaneous induction on the structure of an arithmetical formula $\phi (x_1,\ldots ,x_n)$ the following statements:\begin{rlist}
\item\begin{alist}\item For all $k_1,\ldots ,k_n\in\mathbb{N}$: if there is a $j$-realizer for $\phi (k_1,\ldots ,k_n)$ then $\phi (k_1,\ldots ,k_n)$ is true in the standard model $\mathbb{N}$ in Set;
\item There is a partial recursive function $s_{\phi}$ of $n$ arguments, such that for all $k_1,\ldots ,k_n$: if $\phi (k_1,\ldots ,k_n)$ is true in $\mathbb{N}$ then $s_{\phi}(k_1,\ldots ,k_n)$ is defined and an element of $j(\{ s\, |\, s\text{ $j$-{\bf realizes} $\phi (k_1,\ldots ,k_n)$}\} )$;\end{alist}
\item There is a $j$-realizer for $\forall\vec{x}(\phi (\vec{x})\vee\neg\phi (\vec{x}))$.\end{rlist}

\noindent For atomic $\phi$, i)a) holds by definition of $j$-realizability; for i)b), let $s_{\phi}$ be $\lambda x_1\cdots x_k.\beta (0)$. The statement is obvious. Statement ii) is clear since in any topos, basic equations on the NNO are decidable.

\noindent Induction step i)a) for $\to$: suppose $m$ $j$-realizes $\phi (\vec{k})\to\psi (\vec{k})$ and $\phi (\vec{k})$ is true in $\mathbb{N}$. By induction hypothesis i)b) for $\phi$, $s_{\phi}(\vec{k})$ is defined and in $j(\{ s\, |\, s\text{ $j$-{\bf realizes} $\phi (\vec{k})$}\}$. Then
$$\alpha m(s_{\phi}(\vec{k}))\in jj(\{ s\, |\, s\text{ $j$-{\bf realizes} $\psi (\vec{k})$}\})$$
so since $j\emptyset =\emptyset$ we see that there exists a $j$-realizer for $\psi (\vec{k})$; hence by induction hypothesis i)a) for $\psi$, $\psi (\vec{k})$ is true.

\noindent Induction step i)b) for $\to$: define $s_{\phi\to\psi}$ by
$$s_{\phi\to\psi}(\vec{k})\, =\, \beta (\lambda m.s_{\psi}(\vec{k}))$$
The proof that this works is left to the reader.

\noindent Induction step ii) for $\to$ follows by logic from the induction hypotheses for $\phi$ and $\psi$.

\noindent Induction step i)a) for $\wedge$: follows readily from the induction hypotheses. For i)b), define
$$s_{\phi\wedge\psi}(\vec{k})\, =\, \delta (\langle s_{\phi}(\vec{k}),s_{\psi}(\vec{k})\rangle )$$
Again, induction step ii) follows by logic.

\noindent Induction step for $\vee$: i)a) follows easily from the induction hypotheses. For i)b), given $\phi (\vec{k})\vee\psi (\vec{k})$ let, by induction hypothesis ii) for $\phi$, $m$ be a $j$-realizer of $\forall\vec{x}(\phi (\vec{x})\vee\neg\phi (\vec{x}))$, so
$$m\vec{k}\,\in\, j(\{ s\, |\, s\text{ $j$-{\bf realizes} $\phi (\vec{k}\vee\neg\phi (\vec{k}))$}\}$$
Let $a$ be such that for all $\vec{k},y$:
$$a\vec{k}y\;\simeq\;\left\{\begin{array}{rl}y & \text{if }(y)_0=0 \\ \langle 1,s_{\psi}(\vec{k})\rangle & \text{if }(y)_0\neq 0\end{array}\right.$$
Define $s_{\phi\vee\psi}(\vec{k})\, =\, \alpha (a\vec{k})(m\vec{k})$. This satisfies the induction step: assume $\phi (\vec{k})\vee\psi (\vec{k})$ is true. Then whenever $y$ $j$-realizes $\phi (\vec{k})\vee\neg\phi (\vec{k})$, we have by induction hypothesis on $\phi$ and $\psi$, that $a\vec{k}y$ $j$-realizes $\phi (\vec{k})\vee\psi (\vec{k})$. Therefore $\alpha (a\vec{k})(m\vec{k})$ is an element of $j(\{ s\, |\, s\text{ $j$-{\bf realizes} $\phi (\vec{k})\vee\psi (\vec{k})$}\} )$, as desired.

\noindent Induction step ii) for $\vee$ again follows by logic.

\noindent Induction step for $\forall$: i)a) if $m$ $j$-realizes $\forall x\phi (\vec{k},x)$ then for all n, $mn$ is defined and an element of $j(\{ s\, |\, s\text{ $j$-{\bf realizes} $\phi (\vec{k},n)$}\}$; since $j\emptyset =\emptyset$, by the induction hypothesis for $\phi$ it follows that for all $n$, $\phi (\vec{k},n)$ is true; hence $\forall x\phi (\vec{k},x)$ is true.

\noindent For i)b) define $s_{\forall x\phi}(\vec{k})\, =\, \beta (\lambda y.s_{\phi}(\vec{k},y))$. Verification is easy.

\noindent For ii) let $A$ be the arithmetical set
$$\{\vec{k}\, |\, \text{for all $x\in\mathbb{N}$, $\phi (\vec{k},x)$ is true}\}$$
By assumption on $j$, $j$ forces this set to be decidable; let $a$ be such that for all $\vec{k}$, $a\vec{k}\in j(\{ 0\} )$ if $\vec{k}\in A$, and $a\vec{k}\in j(\{ 1\} )$ otherwise. Let $b$ be such that for all $\vec{k},v$:
$$b\vec{k}v\;\simeq\;\left\{\begin{array}{rl}\alpha (\lambda u.\langle 0,u\rangle )(s_{\forall x\phi}(\vec{k})) & \text{if }v=0 \\ \alpha (\lambda u.\langle 1,u\rangle )(\beta (0)) & \text{if }v\neq 0\end{array}\right.$$
Then if $v=0$ and $\vec{k}\in A$, it follows by step i)b) just proved, that 
$$b\vec{k}v\in j(\{\langle 0,s\rangle\, |\, s\text{ $j$-{\bf realizes} $\forall x\phi (\vec{k},x)$}\} )$$
and if $v=1$ and $\vec{k}\not\in A$ then by step i)a) just proved it follows that
$$b\vec{k}v\in j(\{\langle 1,s\rangle\, |\, s\text{ $j$-{\bf realizes} $\neg\forall x\phi (\vec{k},x)$}\} )$$
So when $v\in\{\chi _A(\vec{k})\}$ (where $\chi _A$ is the characteristic function of $A$) then
$$b\vec{k}v\in j(\{ s\, |\, s\text{ $j$-{\bf realizes} $\forall x \phi (\vec{k},x)\vee\neg\forall x\phi (\vec{k},x)$}\} )$$
Therefore, since $a\vec{k}\in j(\{\chi _A(\vec{k})\} )$ we have
$$\alpha (b\vec{k})(a\vec{k})\in jj(\{ s\, |\, s\text{ $j$-{\bf realizes} $\forall x\phi (\vec{k},x)\vee\neg\forall x\phi (\vec{k},x)$}\} )$$
so
$$\gamma(\alpha (b\vec{k})(a\vec{k}))\in j(\{ s\, |\, s\text{ $j$-{\bf realizes} $\forall x\phi (\vec{k},x)\vee\neg\forall  x\phi (\vec{k},x)$}\} )$$
and $\lambda\vec{k}.\gamma (\alpha (b\vec{k})(a\vec{k}))$ is thus a $j$-realizer for $\forall\vec{y}(\forall x\phi (\vec{y},x)\vee\neg\forall x\phi (\vec{y},x))$.

Induction step for $\exists$: i)a) follows at once from the induction hypothesis. We prove i)b) and ii) simultaneously. Clearly, from the induction hypotheses on $\phi$ it follows that $\exists x\phi (\vec{k},x)$ is true if and only it has a $j$-realizer. So the set $A=\{\vec{k}\, |\, \exists x\phi (\vec{k},x)\text{ has a $j$-realizer}\} =\{\vec{k}\, |\,\exists x\phi (\vec{k},x)\text{ is true}\}$ is arithmetical. By hypothesis on $j$, its characteristic function is forced to be total by $j$. Also, by induction hypothesis, the characteristic function of the set $\{\vec{k},v\, |\,\phi (\vec{k},v)\text{ has a $j$-realizer}\}$ is forced to be total by $j$. Since by Hyland's theorem (\ref{HylandTuring}) the set of functions which are forced to be total by $j$ is closed under `recursive in', the function
$$f(\vec{k})\; =\; \left\{\begin{array}{rl} 0 & \text{if for no $v$, $\phi (\vec{k},v)$ has a $j$-realizer} \\
m+1 & \text{if $m$ is least such that $\phi (\vec{k},m)$ has a $j$-realizer}\end{array}\right.$$
is forced to be total by $j$; let $a$ be such that for all $\vec{k}$, $a\vec{k}\in j(\{ f(\vec{k})\} )$.

If $\exists v\phi (\vec{k},v)$ is true hence $f(\vec{k})=m+1$ for some $m$, then by induction hypothesis i)b) on $\phi$, $\delta (\langle\beta (m),s_{\phi}(\vec{k},m)\rangle )$ is an element of $j(\{ s\, |\, s\text{ $j$-{\bf realizes} $\exists v\phi (\vec{k},v)$}\{ )$. It follows that
$$\alpha (\lambda n.\delta (\langle\beta (n-1),s_{\phi}(\vec{k},n-1)\rangle ))(a\vec{k})$$
is an element of $jj(\{ s\, |\, s\text{ $j$-{\bf realizes} $\exists v\phi (\vec{k},v)$}\} )$; so if we define $s_{\exists v\phi}(\vec{k})$ by
$$\gamma [\alpha (\lambda n.\delta (\langle\beta (n-1),s_{\phi}(\vec{k},n-1)\rangle ))(a\vec{k})]$$
then $s_{\exists v\phi}$ has the required property.

\noindent The proof that $\forall \vec{y}(\exists x\phi (\vec{y},x)\vee\neg\exists x\phi (\vec{y},x))$ has a $j$-realizer, is now straightforward (again, one uses the function $f$), and left to the reader.\end{proof}

\begin{small}
\bibliographystyle{plain}

\begin{thebibliography}{10}

\bibitem{BrattkaV:weidop}
V.~Brattka and G.~Gherardi.
\newblock Weihrauch degrees, {O}mniscience principles and weak computability.
\newblock {\em Journal of {S}ymbolic {L}ogic}, 76(1):143--176, 2011.

\bibitem{HylandJ:efft}
J.M.E. Hyland.
\newblock The effective topos.
\newblock In A.S. Troelstra and D.~Van Dalen, editors, {\em The {L.E.J.
  Brouwer} Centenary Symposium}, pages 165--216. North Holland Publishing
  Company, 1982.

\bibitem{IshiharaH:omnpkl}
{H}. Ishihara.
\newblock An omniscience principle, the {K}{\"o}nig {L}emma and the
  {H}ahn-{B}anach theorem.
\newblock {\em Math. {L}ogic {Q}uarterly}, 36(3):237--240, 1990.

\bibitem{JohnstonePT:skee}
P.T. Johnstone.
\newblock {\em {S}ketches of an {E}lephant (2 vols.)}, volume~43 of {\em Oxford
  Logic Guides}.
\newblock Clarendon Press, Oxford, 2002.

\bibitem{JoyalA:extgtg}
A.~Joyal and M.~Tierney.
\newblock {\em An extension of the {G}alois theory of {G}rothendieck}, volume
  309 of {\em Memoirs of the American Mathematical Society}.
\newblock American Mathematical Society, Providence, R.I., 1984.

\bibitem{LambekJ:inthoc}
J.~Lambek and P.~J. Scott.
\newblock {\em Introduction to Higher Order Categorical Logic}.
\newblock Cambridge University Press, Cambridge, 1986.

\bibitem{LeeS:subet}
S.~Lee.
\newblock {\em Subtoposes of the {E}ffective {T}opos}.
\newblock Master {T}hesis, Utrecht University, 2011.
\newblock available at {\tt http://front.math.ucdavis.edu/1112.5325}.

\bibitem{MacLaneS:shegl}
S.~Mac~Lane and I.~Moerdijk.
\newblock {\em Sheaves in Geometry and Logic}.
\newblock Springer Verlag, 1992.

\bibitem{McCartyD:varth}
Charles McCarty.
\newblock Variations on a {T}hesis: {I}ntuitionism and {C}omputability.
\newblock {\em The Notre Dame Journal of Formal Logic}, 28(4):536--580, 1987.

\bibitem{MedvedevY:degdmp}
{\protect{Yu}}.T. Medvedev.
\newblock Degrees of difficulty of the mass problems.
\newblock {\em Doklady {A}kad.{N}auk.{SSSR}}, 140(4):501--504, 1955.

\bibitem{MoerdijkI:minsta}
I.~Moerdijk.
\newblock A model for intuitionistic nonstandard arithmetic.
\newblock {\em Annals of Pure and Applied Logic}, 73:37--51, 1995.

\bibitem{MoerdijkI:minmha}
I.~Moerdijk and E.~Palmgren.
\newblock Minimal models of {H}eyting {A}rithmetic.
\newblock {\em Journal of Symbolic Logic}, 62(4):1448--1460, 1997.

\bibitem{PhoaW:relcet}
W.~Phoa.
\newblock Relative computability in the effective topos.
\newblock {\em Mathematical Proceedings of the Cambridge Philosophical
  Society}, 106:419--422, 1989.

\bibitem{PittsAM:thet}
A.M. Pitts.
\newblock {\em The Theory of Triposes}.
\newblock PhD thesis, Cambridge University, 1981.
\newblock available at {\tt
  http://www.cl.cam.ac.uk/$\sim$amp12/papers/thet/thet.pdf}.

\bibitem{SimpsonS:masp}
Stephen~G. Simpson.
\newblock Mass {P}roblems.
\newblock Slides of invited plenary talk given at {L}ogic {C}olloquium, Bern
  2008. Available at {\tt
  http://www.math.psu.edu/simpson/talks/asl0807/talk.pdf}.

\bibitem{SorbiA:remsml}
A.~Sorbi.
\newblock Some remarks on the structure of the {M}edvedev lattice.
\newblock {\em Journal of Symbolic Logic}, 55(2):831--853, 1990.

\bibitem{TerwijnS:conlml}
S.~Terwijn.
\newblock Constructive logic and the {M}edvedev lattice.
\newblock {\em Notre Dame Journ. formal Logic}, 47(1):73--82, 2006.

\bibitem{TerwijnS:medlcc}
S.~Terwijn.
\newblock The {M}edvedev lattice of computably closed sets.
\newblock {\em Arch. Math. Logic}, 45(2):179--190, 2006.

\bibitem{BergB:aricat}
Benno van~den Berg and Jaap van Oosten.
\newblock Arithmetic is categorical.
\newblock Note, available at {\tt
  http://www.staff.science.uu.nl/$\sim$ooste110/realizability/arithcat.pdf},
  2011.

\bibitem{OostenJ:extlrh}
J.~van Oosten.
\newblock Extension of {L}ifschitz' realizability to higher order arithmetic,
  and a solution to a problem of {F}. {R}ichman.
\newblock {\em Journal of Symbolic Logic}, 56:964--973, 1991.

\bibitem{OostenJ:remlrt}
J.~van Oosten.
\newblock Two remarks on the {L}ifschitz realizability topos.
\newblock {\em Journal of Symbolic Logic}, 61:70--79, 1996.

\bibitem{OostenJ:reaics}
J.~van Oosten.
\newblock {\em Realizability: an Introducton to its Categorical Side}, volume
  152 of {\em Studies in Logic}.
\newblock North-Holland, 2008.

\end{thebibliography}

\end{small}
\end{document}